\newtheorem{theorem}{Theorem}[section]
\newtheorem{lemma}[theorem]{Lemma}
\newtheorem{proposition}[theorem]{Proposition}
\newtheorem{corollary}[theorem]{Corollary}
\newtheorem{definition}[theorem]{Definition}
\newtheorem{example}[theorem]{Example}
\newtheorem{remark}[theorem]{Remark}
\newtheorem{hypothesis}[theorem]{Hypothesis}
\let\originalleft\left
\let\originalright\right
\renewcommand{\left}{\mathopen{}\mathclose\bgroup\originalleft}
\renewcommand{\right}{\aftergroup\egroup\originalright}
\def\N{\mathbb{N}}
\def\V{V}
\def\u{\mathfrak{p}}
\def\v{\mathfrak{m}}
\def\w{\mathfrak{u}}
\def\f{\mathfrak{g}}
\def\y{\mathbf{Z}}
\def\x{\boldsymbol{\psi}}
\def\z{\mathfrak{q}}
\def\D{\mathrm{D}}
\def\V{\mathbb{V}}
\def\L{\mathbb{L}}
\def\H{\mathbb{H}}
\def\P{\mathrm{P}}
\def\A{\mathcal{A}}
\def\Arm{\mathrm{A}}
\def\I{\mathrm{I}}
\def\B{\mathcal{B}}
\def\J{\mathcal{J}}
\def\K{\mathcal{K}}
\def\d{\mathrm{d}}
\def\wi{\widetilde}
\def\Tr{\mathrm{Tr}}
\def\X{\mathbb{X}}
\def\S{\mathcal{S}}
\def\W{\mathrm{W}}
\def\diver{\mathrm{div}}
\numberwithin{equation}{section} \allowdisplaybreaks
\newcommand{\R}{\mathbb{R}}
\renewcommand{\d}{\/\mathrm{d}\/}
\newcommand{\Addresses}{{
		\footnote{
			\noindent \textsuperscript{1}Center for Mathematics and Applications (NOVA Math), NOVA School of Science and Technology (NOVA FCT),	Portugal.\par\nopagebreak
			\noindent 
			
			\textit{e-mail:} \texttt{Kush Kinra: kushkinra@gmail.com, k.kinra@fct.unl.pt.}
			
			\noindent \textsuperscript{*}Corresponding author.
			
			\textit{Key words:} Deterministic and stochastic third-grade fluids, small forcing intensity, singleton attractor, upper semicontinuity, lower semicontinuity.
			
			Mathematics Subject Classification (2020): Primary 35B41, 35Q35; Secondary 37L55, 37N10, 35R60.

}}}
\begin{document}
	
	\title[Random attractors converging towards singleton attractor]{Rate of convergence of random attractors towards deterministic singleton attractor for a class of non-Newtonian fluids of differential type
		\Addresses}
	
	\author[Kush Kinra]
	{Kush Kinra\textsuperscript{1*}}

	\maketitle

	\begin{abstract}

In this article, we investigate the long-term dynamics of a class of two- and three-dimensional non-Newtonian fluids of differential type, known as third-grade fluids. We first show that when the external forcing is sufficiently small, the global attractor of the underlying system (which characterizes its asymptotic behavior) reduces to a single point. We then consider the system under stochastic perturbations, specifically infinite-dimensional additive white noise. In this random setting, we do not find conclusive evidence that the corresponding random attractor remains a single point, as in the deterministic case. However, we are able to estimate the rate at which the random attractor approaches the deterministic singleton attractor as the intensity of the stochastic noise tends to zero.

  	\end{abstract}


\section{Introduction}

From a physical standpoint, the concept of an attractor provides a fundamental explanation of how complex fluid flows evolve and stabilize over time. Although the equations governing fluid motion, such as the Navier-Stokes or non-Newtonian flow equations, are highly nonlinear and sensitive to initial conditions, real fluid systems often display a tendency to approach a statistically steady or recurrent state after transient effects decay. This long-term behavior reflects the balance between energy input from external forcing and energy dissipation due to viscosity. The attractor represents the mathematical manifestation of this physical stabilization process, describing the set of all possible states toward which the flow eventually evolves. Studying attractors therefore helps to identify and quantify the persistent structures, coherent patterns, or steady regimes that govern fluid motion at large times. Moreover, the finite-dimensional nature of attractors in many dissipative systems implies that, despite the infinite-dimensional character of the governing equations, the essential dynamics of the flow can be captured by a relatively small number of dominant modes, an observation that aligns closely with experimental and computational findings in fluid mechanics and turbulence theory (see \cite{Foias+Manley+Rosa+Temam_2001,Robinson2,R.Temam} etc.). Understanding attractors thus provides deep physical insight into how complex flows self-organize, lose memory of initial conditions, and exhibit predictable long-term behavior despite underlying nonlinearities.

Extensive research on stochastic perturbations of evolution equations has led to the development of the theory of random dynamical systems and random attractors (see \cite{Arnold}). The concept of a pathwise pullback random attractor, which can also be forward attracting in probability, was first introduced in \cite{CF,FS} and further studied in \cite{HCJA}. Since then, the existence and properties of random attractors for various stochastic partial differential equations (SPDEs) have been established in numerous works, see \cite{Han+Zhou_2025,Kinra+Cipriano_STGF,Kinra+Mohan_2025_JDDE,Wu+Nguyen+Bai} etc. for some recent works. Moreover, \cite{CCLR} highlighted the distinct impacts that different types of stochastic perturbations can have on the long-term behavior of deterministic systems.

An important characteristic of attractors is their stability under stochastic perturbations. This property refers to the behavior of the random attractor $\mathscr{A}_{\varsigma}(\omega)$ as the intensity of the stochastic noise $\varsigma$ tends to zero, specifically, whether $\mathscr{A}_{\varsigma}(\omega)$ converges to the corresponding deterministic global attractor $\mathscr{A}$.  In a metric space $(\mathbb{X}, d)$, an attractor is typically a compact, invariant set that attracts a certain family of subsets of the state space, most often all bounded sets. Such an attractor represents the collection of all possible asymptotic states of the system, and it is not necessarily a single point (singleton). For the analysis of stability, two main notions of convergence between attractors are commonly considered, and they are defined as follows:
\begin{itemize}
	\item [(i)] Upper semicontinuity, that is,
	\begin{align*}
		\lim_{\varsigma\to0^{+}} \text{dist}\left(\mathscr{A}_{\varsigma}(\omega),\mathscr{A}\right)=0.
	\end{align*}
	\item [(ii)] Lower semicontinuity, that is,
	\begin{align*}
		\lim_{\varsigma\to0^{+}} \text{dist}\left(\mathscr{A},\mathscr{A}_{\varsigma}(\omega)\right)=0,
	\end{align*}
\end{itemize}
where $\text{dist}(\cdot,\cdot)$ denotes the Hausdorff semi-distance between two non-empty subsets of $\mathbb{X}$, that is, for non-empty sets $E,F\subset \mathbb{X}$ $$\text{dist}(E,F)=\sup_{x\in E}\inf_{y\in F} d(x,y).$$ 

The upper semicontinuity of random attractors is generally easier to establish than lower semicontinuity. It relies on general properties of dynamical systems and attractors, with relatively mild assumptions (see \cite{Cao+Gao_2025,Chen+Tian+Zhang_2025,Kinra+Mohan_2023_DIE,Kinra+Mohan_2023_JDDE,Wang+Li+Yang+Jia_2021} etc.). The notion of upper semicontinuity for random attractors was first introduced in \cite{CLR} and later extended to non-compact random dynamical systems in \cite{Wang_2009}. In contrast, proving lower semicontinuity typically requires deeper analysis of the deterministic attractor’s structure or uniform attraction properties of the family of perturbed random attractors (see \cite{BV,CLR1,HR1,LK}). In this work, we investigate how fast random attractors, under additive white noise perturbation, converge to the deterministic (unperturbed) attractor, in the specific case where the latter consists of a single element.

\medskip

It is worth emphasizing that, although much of the literature focuses on Newtonian fluids governed by the Navier-Stokes equations, many real-world industrial and biological flows (see \cite{DR95,FR80,yas-fer_JNS} and references therein) do not follow Newton’s law of viscosity and therefore cannot be accurately described by these models. Such fluids often display complex rheological features, such as shear-thinning, shear-thickening, or viscoelastic behavior, that classical Newtonian's law fail to capture. As a result, more sophisticated mathematical models are required to describe their dynamics and predict their behavior in realistic settings. In recent years, non-Newtonian viscoelastic fluids of differential type have attracted significant attention (see, for example, \cite{Cioran2016}). In particular, third-grade fluid models have been employed in a number of simulation studies to better understand the behavior of nanofluids (see for instance \cite{PP19,RHK18}). Nanofluids are engineered suspensions of nanoparticles in a base fluid, such as water, oil, or ethylene glycol, and are known to exhibit enhanced thermal conductivity compared to the base fluid alone, making them highly relevant in technological and microelectronic applications. Therefore, a rigorous mathematical analysis of third-grade fluid equations is essential for understanding the behavior of these fluids.

We now briefly outline the derivation of the governing equations for non-Newtonian fluids of differential type. For a detailed discussion of the kinematics of such fluids, we refer the reader to \cite{Cioran2016}. Let $\v$ denote the velocity field of the fluid, and introduce the Rivlin-Ericksen kinematic tensors $\Arm_n$ for $n \geq 1$ (see \cite{RE55}), defined by
\begin{align*}
	\Arm_1(\v)&=\nabla \v+(\nabla \v)^T;	\,\Arm_n(\v)=\dfrac{\D}{\D t} \Arm_{n-1}(\v)+\Arm_{n-1}(\v)(\nabla \v)+(\nabla \v)^T\Arm_{n-1}(\v), \ n=2,3,\cdots,
\end{align*}
where $
\dfrac{\mathrm{D}}{\mathrm{D}t}\equiv
\dfrac{\partial}{\partial t}+(\v\cdot \nabla)$ is material derivative.

The constitutive law of fluids of grade $n$ reads $\mathbb{T}=-p\mathrm{I} + F(\Arm_1,\cdots,\Arm_n),$ where $\mathbb{T}$ is the Cauchy stress tensor, $p$ is the pressure and $F$ is an isotropic polynomial function of degree $n$, subject to the usual	requirement of material frame indifference,	see	\cite{Cioran2016}. The constitutive law of   third-grade fluids	$(n=3)$ is given by the following equation	 

For fluids of grade $n$, the constitutive relation is $\mathbb{T}=-p\mathrm{I} + F(\Arm_1,\cdots,\Arm_n),$ where $\mathbb{T}$ is the Cauchy stress tensor, $p$ is the pressure and $F$ is an isotropic polynomial function of degree $n$, satisfying the principle of material frame indifference (see \cite{Cioran2016}).
For third-grade fluids $(n=3)$, this constitutive law can be written as follows:
\begin{align*}
	\mathbb{T}=-p\mathrm{I}+\nu \Arm_1+\alpha_1\Arm_2+\alpha_2\Arm_1^2+\beta_1 \Arm_3+\beta_2(\Arm_1\Arm_2+\Arm_2\Arm_1)+\beta_3\Tr(\Arm_1^2)\Arm_1,
\end{align*}
where $\nu$ is the viscosity and $\alpha_1, \alpha_2, \beta_1, \beta_2, \beta_3$ are material moduli. Recall that, by Newton’s second law, the momentum equations are given by
$$\dfrac{\mathrm{D}\v}{\mathrm{D}t}=
\dfrac{\partial\v}{\partial t}+(\v\cdot \nabla) \v=\text{div}(\mathbb{T}).$$ 
If $\beta_1=\beta_2=\beta_3=0$, the constitutive equations reduce to those of second-grade fluids. It has been demonstrated that the Clausius–Duhem inequality, together with the requirement that the Helmholtz free energy attain a minimum in equilibrium, imposes the following conditions on the viscosity and material moduli:
\begin{align}\label{secondlaw}
	\nu \geq 0,\quad \alpha_1+\alpha_2=0, \quad \alpha_1\geq 0. 
\end{align}
Although second-grade fluids are mathematically easier to handle, rheologists working with various non-Newtonian fluids have not verified the restrictions in \eqref{secondlaw}. They have therefore concluded that the fluids tested are not truly second-grade fluids but instead possess a different constitutive structure. For further details, see \cite{FR80} and the references therein. Following \cite{FR80}, to ensure that the motion of the fluid is consistent with thermodynamics, one must impose that
\begin{equation}\label{third-grade-paremeters}
	\nu \geq 0, \quad \alpha_1\geq 0, \quad |\alpha_1+\alpha_2 |\leq \sqrt{24\nu\beta}, \quad \beta_1=\beta_2=0, \beta_3=\beta \geq 0.
\end{equation}	
Hence, the incompressible third-grade fluid equations read as
\begin{equation}
	\label{third-grade-fluids-equations}
	\left\{\begin{aligned}
		\partial_t(z(\v))-\nu \Delta \v+(\v\cdot \nabla)z(\v) +\displaystyle\sum_{j=1}^d[z(\v)]_j\nabla \v_j & -(\alpha_1+\alpha_2)\text{div}((\Arm(\v))^2)\\ -\beta \text{div}[\Tr(\Arm(\v)\Arm(\v)^T)\Arm(\v)] + \nabla \mathbf{P}
		& =  \f,  \\
		\diver (\v) & =  0, \\
		z(\v)&:=\v-\alpha_1\Delta \v, \\[0.2cm]
		\Arm(\v) & := \nabla \v+(\nabla \v)^T,
	\end{aligned}\right.
\end{equation}
where the viscosity $\nu$ and the material moduli	 $\alpha_1,\alpha_2$, $\beta$ 	verify	\eqref{third-grade-paremeters}.  Note that setting $\alpha_1=\alpha_2=0$	and $\beta=0$ recovers the classical Navier-Stokes equations (NSEs). Mathematically, $n$-grade fluids form a hierarchy of increasing complexity; compared to Newtonian (grade 1) and second-grade fluids, third-grade fluids involve more nonlinear terms and require a more intricate analysis.

Let $\mathfrak{D}$ be an open and connected subset (may be bounded or unbounded) of $\R^d$, $d\in\{2,3\}$, the boundary of which is sufficiently smooth. In this article,  we consider the system \eqref{third-grade-fluids-equations} under a subset of the physical conditions \eqref{third-grade-paremeters}, specifically
\begin{equation}\label{third-grade-paremeters-res}
	\nu > 0, \quad \alpha_1= 0, \ \alpha_2=\alpha, \quad |\alpha | < \sqrt{2\nu\beta}, \quad \beta_1=\beta_2=0, \beta_3=\beta > 0,
\end{equation}	
in the presence of an autonomous forcing term $\f$ and subject to Dirichlet boundary conditions, the equations take the form
\begin{equation}\label{equationV1}
	\left\{		\begin{aligned}
		\dfrac{\partial	\v}{\partial t} - \nu \Delta \v+(\v\cdot \nabla)\v-\alpha\text{div}((\Arm(\v))^2) - \beta \text{div}(|\Arm(\v)|^2\Arm(\v))+ \nabla \textbf{P}&=\f, &&   \text{in }  \mathfrak{D} \times (0,\infty),\\
		\text{div}\; \v&=0 \quad &&   \text{in } \mathfrak{D} \times [0,\infty),\\
		\v &= \boldsymbol{0} &&   \text{on } \partial\mathfrak{D}\times [0,\infty),\\
		\v(x,0)&=\x  \quad &&  \text{in } \mathfrak{D},
	\end{aligned}\right.
\end{equation}
where $\v:\mathfrak{D}\times [0,\infty) \to \mathbb{R}^d$,  $\mathbf{P}:\mathfrak{D}\times   [0,\infty) \to \mathbb{R}$ and	 $\f:\mathfrak{D}\times[0,\infty)\to \R^d $ represent the velocity field, pressure	an	external force, respectively.

We now turn to the literature concerning system \eqref{equationV1}, i.e., system \eqref{third-grade-fluids-equations} with $\alpha_1=0$. A natural starting point is the seminal work of Ladyzhenskaya \cite{Ladyzhenskaya67}, who introduced a new model for incompressible viscous fluids in which the viscosity depends on the velocity gradient. This system features nonlinear terms similar to those in \eqref{equationV1}. In \cite{Hamza+Paicu_2007}, the authors established global well-posedness for system \eqref{equationV1} in $\R^3$, assuming divergence-free initial data in $\L^2(\R^3)$. Their approach relied on a monotonicity method along with condition \ref{third-grade-paremeters-res}. The stochastic counterpart of system \eqref{equationV1} on bounded domains was treated in \cite{yas-fer_JNS}, where the authors proved well-posedness and also constructed an ergodic invariant measure. More recently, in \cite{Kinra+Cipriano_TGF}, the author together with Cipriano proved the existence of a unique pullback attractor for system \eqref{equationV1} on Poincar\'e domains (both bounded and unbounded). They subsequently extended this work in \cite{Kinra+Cipriano_STGF} by proving the existence of random attractors for the stochastic version of system \eqref{equationV1} with infinite-dimensional additive white noise, again on Poincaré domains. It is worth noting that \cite{Kinra+Cipriano_TGF, Kinra+Cipriano_STGF} impose the following assumption on the domains, and we adopt the same assumption in the present work.
\begin{hypothesis}\label{assumpO}
	There exists a positive constant $\lambda $ such that the following Poincar\'e inequality  is satisfied:
	\begin{align}\label{Poin}
		\lambda\int_{\mathfrak{D}} |\psi(x)|^2 \d x \leq \int_{\mathfrak{D}} |\nabla \psi(x)|^2 \d x,  \ \text{ for all } \  \psi \in \H^{1}_0 (\mathfrak{D}).
	\end{align}
\end{hypothesis}

\begin{example}
	A typical example of a unbounded Poincar\'e domain in $\R^2$ and $\mathbb{R}^3$ are $\mathfrak{D}=\R\times(-L,L)$ and $\mathfrak{D}=\R^2\times(-L,L)$ with $L>0$, respectively, see \cite[p.306]{R.Temam} and \cite[p.117]{Robinson2}. 
\end{example}
\begin{example}
	For any bounded domain $\mathfrak{D}$, Poincar\'e inequality \eqref{Poin} is satisfied with $\lambda=\lambda_1$, where $\lambda_1$ is the first eigenvalue of the Stokes operator defined on bounded domains. 
\end{example}

The goals of this work are twofold. First, we establish the existence of a deterministic singleton attractor for a class of two- and three-dimensional third-grade fluid equations on the domains satisfying Hypothesis \ref{assumpO}. Second, we show that random attractors for stochastic version of system \eqref{equationV1} under infinite-dimensional additive noise perturbation on the domains satisfying Hypothesis \ref{assumpO} converge to this deterministic singleton attractor as the noise intensity tends to zero. For sufficiently small forcing (see \eqref{C_2} below), the deterministic global attractor is shown to be a singleton. However, even when the deterministic attractor is a singleton, the corresponding random attractors are typically not. The reason is that the randomness acts as an additional forcing that prevents effective control of the overall forcing intensity (see \cite{HCPEK} for details). In this sense, the noise disrupts the singleton structure of the unperturbed attractor. Moreover, we prove that, under infinite-dimensional additive white noise and condition \eqref{C_2}, the random attractors $\mathscr{A}_{\varsigma}(\omega)$ converge to the deterministic singleton attractor $\mathscr{A}$ both upper and lower semi continuously, i.e.,
$$\text{dist}_{\H}\left(\mathscr{A}_{\varsigma}(\omega),\mathscr{A}\right)\leq\updelta_{\varsigma}(\omega)$$
where $$\text{dist}_{\H}(A,B)=\max\{\text{dist}(A,B),\text{dist}(B,A)\},$$ and each $\updelta_{\varsigma}$ is a random variable satisfying $\updelta_{\varsigma}(\omega)\sim \varsigma^{\delta}$ as $\varsigma\to 0^{+}$ for some $\delta>0$.  For the Newtonian case, the existence of singleton attractors for the 2D deterministic Navier-Stokes equations and the 2D and 3D convective Brinkman-Forchheimer equations under small forcing, along with the convergence of random attractors to the deterministic singleton attractor, has been discussed in \cite{HCPEK} and \cite{Kinra+Mohan_2022_EECT}, respectively.

\begin{remark}
	Note that both articles \cite{HCPEK} and \cite{Kinra+Mohan_2022_EECT}, which address Newtonian fluids, are set on the torus. Since any vector field $\psi\in\H^1(\mathbb{T}^d)$ with zero-mean satisfies the Poincar\'e inequality \eqref{Poin}, our results remain valid on the torus as well. Only minor adjustments to the functional spaces are needed to account for the zero-mean condition.
\end{remark}

The remainder of the paper is organized as follows. In the next section, we introduce the functional spaces required for our analysis. We then define the operators used to rewrite \eqref{equationV1} in an abstract form and establish several estimates that will be used later. The abstract formulation of \eqref{equationV1} and the corresponding solvability results are also presented in this section.
In Section \ref{sec3}, we impose a smallness condition on the external forcing $\f$ (see \eqref{C_2} below). Under this assumption, Theorem \ref{D-SA} establishes the existence of a singleton attractor for system \eqref{equationV1}.
In the final section, we study system \eqref{equationV1} perturbed by infinite-dimensional additive white noise. In Theorem \ref{Conver-add} under the smallness assumption \ref{C_2}, we prove that the associated random attractors converge to the deterministic singleton attractor with convergence rate $\varsigma^{\frac23}$. We also note that this rate is slower than the one known for the 2D Navier-Stokes equations (see Remark \ref{rem-ROC}) below.

\section{Mathematical formulation and preliminary results}\label{Sec2} \setcounter{equation}{0}

In this section, we discuss necessary function spaces and operators to obtain the abstract formulation for the system \eqref{equationV1}. In addition, we also provide the known results for the well-posedness of system \eqref{equationV1}.

\subsection{Notations and the functional setting }
Let	$m\in	\mathbb{N}^*:=\N\cup\{\infty\}$	and	$1\leq	p<	\infty$,	we	denote	by	$\mathbb{W}^{m,p}(\mathfrak{D})$ (resp. $\mathrm{W}^{m,p}(\mathfrak{D})$)	the	standard Sobolev space of matrix/vector-valued (resp. scalar-valued) functions	whose weak derivative up to order $m$	belong	to	the	Lebesgue	space	$\L^p(\mathfrak{D})$ (resp. $\mathrm{L}^p(\mathfrak{D})$) and set	$\H^m(\mathfrak{D})=\mathbb{W}^{m,2}(\mathfrak{D})$	and	$\H^0(\mathfrak{D})=\L^2(\mathfrak{D})$.

Let us first define the space $\mathscr{V}:=\{\v \in (\mathrm{C}^\infty_c(\mathfrak{D}))^d \,: \text{ div}(\v)=0\}$. Then, we denote the spaces $\H$, $\V$, ${\L}^{p}_{\sigma}$ ($1<p<\infty$) and $\V_s$ ($s>1$) as the closure of $\mathscr{V}$ in $\L^2(\mathfrak{D})$, $\H^1(\mathfrak{D})$, $\L^p(\mathfrak{D})$ ($1<p<\infty$) and $\H^s(\mathfrak{D})$ ($s>1$), respectively. For the characterization of spaces, we refer readers to \cite[Chapter 1]{Temam_1984}.

Next, let us introduce  the scalar product between two matrices $A:B=\Tr(AB^T)$
and denote $\vert A\vert^2:=A:A.$
The divergence of a  matrix $A\in \mathcal{M}_{d\times d}(E)$ is given by 
$\left\{\text{div}(A)_i\right\}_{i=1}^{i=d}=\left\{\displaystyle\sum_{j=1}^d\partial_ja_{ij}\right\}_{i=1}^{i=d}. $
The space $\H$ is endowed with the  $\L^2$-inner product $(\cdot,\cdot)$ and the associated norm $\Vert \cdot\Vert_{2}$.  In view of the Assumption \ref{assumpO}, on the functional space $\V$, we will consider  the following inner product $(\u,\v)_{\V}:= (\nabla	\u,\nabla	\v),$ and denote by $\Vert \cdot\Vert_{\V}$ the corresponding norm. The usual norms on the classical Lebesgue and Sobolev spaces $\mathrm{L}^p(\mathfrak{D})$ (resp. $\mathbb{L}^p(\mathfrak{D})$) and $\mathrm{W}^{m,p}(\mathfrak{D})$ (resp. $\mathbb{W}^{m,p}(\mathfrak{D})$) will be denoted by $\|\cdot \|_p$ and 
$\|\cdot\|_{\mathrm{W}^{m,p}}$ (resp. $\|\cdot\|_{\mathbb{W}^{m,p}}$), respectively.
In addition, given a Banach space $E$, we will denote by $E^\prime$ its dual.

	Let	us	introduce	the	following	Banach	space	$(\X,\Vert	\cdot\Vert_{\X})$
$$ \X=\mathbb{W}_0^{1,4}(\mathfrak{D})\cap \V,	\quad	\text{	with	\;\;	} \Vert	\cdot\Vert_\X:=\Vert	\cdot\Vert_{\mathbb{W}^{1,4}}+\|\cdot\|_{\V}.$$
Indeed,	we	recall	that	$\mathbb{W}_0^{1,4}(\mathfrak{D})$	endowed	with $\Vert	\cdot\Vert_{\mathbb{W}^{1,4}}$-norm	is a Banach	space,	where		$$\Vert	\w \Vert_{\mathbb{W}^{1,4}}^4=\int_{\mathfrak{D}}	\vert	\w(x)\vert^4 \d x+\int_{\mathfrak{D}} \vert	\nabla	\w(x)\vert^4 \d x.$$
We denote	by	$\langle	\cdot,\cdot\rangle$ the duality pairing between $\X^{\prime}$ and $\X$.

\begin{lemma}
	In this lemma, we recall a couple to inequalities which will be used in the sequel:
	\begin{itemize}
		\item  There exists a constant $C_{S,d}>0$ (\cite[Subsection 2.4]{Kesavan_1989}) such that 
		\begin{align}\label{Sobolev-embedding3}
			\|\w\|_{\infty} \leq C_{S,d} \|\nabla\w\|_{4}, \quad \text{	for all } \;	\w\in	\mathbb{W}_0^{1,4}(\mathfrak{D}).
		\end{align}
	
	\item  There	exists a constant $C_{K,d}>0$	such	that (a Korn-type inequality, see	\cite[Theorem 2 (ii)]{DiFratta+Solombrino_Arxiv} and \cite[Theorem 4, p.90]{Kondratev+Oleinik_1988})
	\begin{align}\label{Korn-ineq}
		\Vert	\nabla \w \Vert_{4}	\leq	C_{K,d}\Vert	\Arm(\w)	\Vert_4,\quad \text{	for all } \;	\w\in	\mathbb{W}_0^{1,4}(\mathfrak{D}).
	\end{align}		
\item In view of \eqref{Sobolev-embedding3} and \eqref{Korn-ineq}, there exists a constant $\mathrm{M}_d>0$ such that
\begin{align}\label{Sobolev+Korn}
	\|\w\|_{\infty} \leq \mathrm{M}_d \|\Arm\w\|_{4}, \quad \text{	for all } \;	\w\in	\mathbb{W}_0^{1,4}(\mathfrak{D}).
\end{align}
	\end{itemize}

\end{lemma}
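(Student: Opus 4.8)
The plan is to handle the three displayed inequalities separately, noting that \eqref{Sobolev-embedding3} and \eqref{Korn-ineq} are recalled from the cited literature, while \eqref{Sobolev+Korn} is obtained by simply concatenating them.

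First I would address \eqref{Sobolev-embedding3}. Since $d\in\{2,3\}$ we have $4>d$, and therefore the critical Sobolev (Morrey) embedding $\mathbb{W}^{1,4}(\mathfrak{D})\hookrightarrow\mathbb{L}^{\infty}(\mathfrak{D})$ applies. For $\w\in\mathbb{W}^{1,4}_0(\mathfrak{D})$ one extends $\w$ by zero to $\R^d$ and invokes the homogeneous Morrey inequality on $\R^d$, which for exponent $p=4>d$ controls the sup-norm by the gradient alone; this yields $\|\w\|_{\infty}\leq C_{S,d}\|\nabla\w\|_{4}$ directly, without any separate use of the Poincaré inequality \eqref{Poin}, exactly as recorded in \cite[Subsection 2.4]{Kesavan_1989}.

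Next, for \eqref{Korn-ineq}, the essential point is that for a field vanishing on $\partial\mathfrak{D}$ the symmetric gradient $\Arm(\w)=\nabla\w+(\nabla\w)^{T}$ controls the full gradient $\nabla\w$ in $\mathrm{L}^4$. This is precisely the $\mathrm{L}^p$-Korn inequality in the case $p=4$, which I would quote verbatim from \cite[Theorem 2 (ii)]{DiFratta+Solombrino_Arxiv} and \cite[Theorem 4, p.90]{Kondratev+Oleinik_1988}, producing the constant $C_{K,d}$. Nothing further is needed here beyond citing the statement.

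Finally, \eqref{Sobolev+Korn} follows immediately by chaining the two bounds: for every $\w\in\mathbb{W}^{1,4}_0(\mathfrak{D})$,
\begin{align*}
	\|\w\|_{\infty} \leq C_{S,d}\|\nabla\w\|_{4} \leq C_{S,d}\,C_{K,d}\,\|\Arm(\w)\|_{4},
\end{align*}
so that one may take $\mathrm{M}_d:=C_{S,d}C_{K,d}$. The only genuinely nontrivial inputs are the two cited inequalities themselves; in particular, the main obstacle is ensuring that the $\mathrm{L}^4$-Korn estimate and the Morrey embedding hold with constants that are uniform over the possibly unbounded Poincaré domains admitted by Hypothesis \ref{assumpO}, rather than only over bounded domains. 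Once this uniformity is granted by the cited results, the passage to \eqref{Sobolev+Korn} is purely algebraic.
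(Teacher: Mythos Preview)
Your proposal is correct and matches the paper's approach: the paper does not give a separate proof of this lemma, merely citing \cite{Kesavan_1989} for \eqref{Sobolev-embedding3}, \cite{DiFratta+Solombrino_Arxiv,Kondratev+Oleinik_1988} for \eqref{Korn-ineq}, and then noting that \eqref{Sobolev+Korn} follows ``in view of'' the first two. Your explicit identification $\mathrm{M}_d=C_{S,d}C_{K,d}$ and the remark about uniformity of the constants over the admissible domains are consistent with, and slightly more detailed than, what the paper records.
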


For the sake of simplicity, we do not distinguish between scalar, vector or matrix-valued   notations when it is clear from the context. In particular, $\Vert \cdot \Vert_E$  should be understood as follows
\begin{itemize}
	\item $\Vert f\Vert_E^2= \Vert f_1\Vert_E^2+\cdots+\Vert f_d\Vert_E^2$ for any $f=(f_1,\cdots,f_d) \in (E
	)^d$.
	\item $\Vert f\Vert_{E}^2= \displaystyle\sum_{i,j=1}^d\Vert f_{ij}\Vert_E^2$ for any $f\in \mathcal{M}_{d\times d}(E)$.
\end{itemize}

Throughout the article,  we denote by $C$   generic constant, which may vary from line to line.

\subsection{The Helmholtz projection}
The following content is motivated by the studies \cite{Farwig+Kozono+Sohr_2005,Farwig+Kozono+Sohr_2007,Kunstmann_2010}, which address the Helmholtz projection in the setting of general unbounded domains. We define
\begin{align}
	\widetilde{\L}^p(\mathfrak{D}) : = \begin{cases}
		\L^p(\mathfrak{D})\cap \L^2(\mathfrak{D}), & p\in[2,\infty)\\
		\L^p(\mathfrak{D})+ \L^2(\mathfrak{D}), & p\in (1,2),
	\end{cases}
\end{align}
and 
\begin{align}
	\widetilde{\L}^p_{\sigma}  : = \begin{cases}
		\L^p_{\sigma} \cap \L^2_{\sigma} , & p\in[2,\infty)\\
		\L^p_{\sigma} + \L^2_{\sigma}, & p\in (1,2).
	\end{cases}
\end{align}

Now, we recall that the Helmholtz projector $\mathcal{P}_{p}: \widetilde{\L}^p(\mathfrak{D}) \to \widetilde{\L}^p_{\sigma}$, which is a linear bounded operator characterized by the following decomposition
$\v=\mathcal{P}_p\v+\nabla \varphi, \;  \varphi \in \widetilde{\mathrm{G}}^p(\mathfrak{D})$ satisfying $(\mathcal{P}_p)^* = \mathcal{P}_{p'}$ with $\frac{1}{p}+\frac{1}{p'}=1$ (cf. \cite{Farwig+Kozono+Sohr_2007}), where
\begin{align}
	\widetilde{\mathrm{G}}^p(\mathfrak{D}) : = \begin{cases}
		\mathrm{G}^p(\mathfrak{D})\cap \mathrm{G}^2(\mathfrak{D}), & p\in[2,\infty)\\
		\mathrm{G}^p(\mathfrak{D})+ \mathrm{G}^2(\mathfrak{D}), & p\in (1,2),
	\end{cases}
\end{align}
and 
\begin{align}
	\mathrm{G}^p(\mathfrak{D}) = \{\nabla q \in \L^p(\mathfrak{D}) \; : \; q\in \mathrm{L}^p_{loc} (\mathfrak{D})\}.
\end{align}
Let us fix the notations $\widetilde{\mathbb{W}}^{1,p}_{0,\sigma}  := \widetilde{\mathbb{W}}_0^{1,p}(\mathfrak{D}) \cap \widetilde{\L}^{p}_{\sigma}$,  and $\widetilde{\mathbb{W}}^{-1,p}(\mathfrak{D}):= (\widetilde{\mathbb{W}}_0^{1,p'}(\mathfrak{D}))'$ and $\widetilde{\mathbb{W}}^{-1,p}_{\sigma} : = (\widetilde{\mathbb{W}}^{1,p'}_{0,\sigma})'$ with $\frac{1}{p}+\frac{1}{p'}=1$ for the dual spaces, where
\begin{align}
	\widetilde{\mathbb{W}}^{1,p}_0(\mathfrak{D}) : = \begin{cases}
		\mathbb{W}^{1,p}_0(\mathfrak{D})\cap \mathbb{W}^{1,2}_0(\mathfrak{D}), & p\in[2,\infty)\\
		\mathbb{W}^{1,p}_0(\mathfrak{D})+ \mathbb{W}^{1,2}_0(\mathfrak{D}), & p\in (1,2).
	\end{cases}
\end{align}

\begin{lemma}[{\cite[Proposition 3.1]{Kunstmann_2010}}]
	The Helmholtz projection $\mathcal{P}_{p}$ has a continuous linear extension $\widetilde{\mathcal{P}}_{p}: \widetilde{\mathbb{W}}^{-1,p}(\mathfrak{D}) \to \widetilde{\mathbb{W}}^{-1,p}_{\sigma}$ given by the restriction
	\begin{align}
		\widetilde{\mathcal{P}}_{p} \Phi  = \Phi |_{\widetilde{\mathbb{W}}^{1,p'}_{0,\sigma}}, \;\; \text{ for } \; \; \Phi \in \widetilde{\mathbb{W}}^{-1,p}_{\sigma} \;\; \text{ with } \; \;\frac{1}{p}+\frac{1}{p'}=1.
	\end{align}
\end{lemma}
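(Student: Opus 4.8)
The plan is to realise $\widetilde{\mathcal{P}}_{p}$ as the Banach-space transpose of a natural inclusion and then to verify that this transpose genuinely extends the Helmholtz projector $\mathcal{P}_{p}$. First I would note that, by construction, $\widetilde{\mathbb{W}}^{1,p'}_{0,\sigma}$ is a closed subspace of $\widetilde{\mathbb{W}}_0^{1,p'}(\mathfrak{D})$, so the inclusion $\iota\colon\widetilde{\mathbb{W}}^{1,p'}_{0,\sigma}\hookrightarrow\widetilde{\mathbb{W}}_0^{1,p'}(\mathfrak{D})$ is a bounded linear injection with $\|\iota\|\le 1$. Passing to transposes and invoking the very definitions $\widetilde{\mathbb{W}}^{-1,p}(\mathfrak{D})=(\widetilde{\mathbb{W}}_0^{1,p'}(\mathfrak{D}))'$ and $\widetilde{\mathbb{W}}^{-1,p}_{\sigma}=(\widetilde{\mathbb{W}}^{1,p'}_{0,\sigma})'$, the transpose $\iota'$ is precisely the restriction map $\Phi\mapsto\Phi|_{\widetilde{\mathbb{W}}^{1,p'}_{0,\sigma}}$ from $\widetilde{\mathbb{W}}^{-1,p}(\mathfrak{D})$ onto $\widetilde{\mathbb{W}}^{-1,p}_{\sigma}$ (surjectivity by Hahn--Banach extension), and it is automatically bounded with $\|\iota'\|=\|\iota\|\le 1$. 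I would take this $\iota'$ as the candidate $\widetilde{\mathcal{P}}_{p}$, so that continuity and linearity are immediate.

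The substantive step is to check that $\widetilde{\mathcal{P}}_{p}$ restricts to $\mathcal{P}_{p}$ on $\widetilde{\L}^{p}(\mathfrak{D})$, under the canonical continuous embedding $\widetilde{\L}^{p}(\mathfrak{D})\hookrightarrow\widetilde{\mathbb{W}}^{-1,p}(\mathfrak{D})$ in which a field acts by the $\L^{2}$-type pairing against test functions. Fixing $u\in\widetilde{\L}^{p}(\mathfrak{D})$ and using the Helmholtz decomposition $u=\mathcal{P}_{p}u+\nabla\varphi$ with $\varphi\in\widetilde{\mathrm{G}}^{p}(\mathfrak{D})$, I would show that for every $\phi\in\widetilde{\mathbb{W}}^{1,p'}_{0,\sigma}$ the gradient part is annihilated, $(\nabla\varphi,\phi)=0$, by integration by parts using $\diver\phi=0$ together with the vanishing trace of $\phi$. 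Consequently $\langle\widetilde{\mathcal{P}}_{p}u,\phi\rangle=(u,\phi)=(\mathcal{P}_{p}u,\phi)=\langle\mathcal{P}_{p}u,\phi\rangle$, which identifies $\widetilde{\mathcal{P}}_{p}u$ with $\mathcal{P}_{p}u$ as elements of $\widetilde{\mathbb{W}}^{-1,p}_{\sigma}$. Since $\widetilde{\L}^{p}(\mathfrak{D})$ is dense in $\widetilde{\mathbb{W}}^{-1,p}(\mathfrak{D})$, the bounded operator $\widetilde{\mathcal{P}}_{p}$ is then the unique continuous extension of $\mathcal{P}_{p}$, as claimed. The adjoint relation $(\mathcal{P}_{p})^{*}=\mathcal{P}_{p'}$ provides an equivalent route to the same identification and can be used to streamline the duality bookkeeping.

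The hard part will not be the functional-analytic skeleton but the care required on a general, possibly unbounded, domain, where $\widetilde{\L}^{p}$, $\widetilde{\mathbb{W}}_0^{1,p'}$ and $\widetilde{\mathbb{W}}^{-1,p}$ are built from the intersection/sum constructions that differ across the ranges $p\ge 2$ and $p<2$. One must confirm in both cases that the embedding $\widetilde{\L}^{p}(\mathfrak{D})\hookrightarrow\widetilde{\mathbb{W}}^{-1,p}(\mathfrak{D})$ and the density used above hold consistently, that the orthogonality $(\nabla\varphi,\phi)=0$ survives for the sum/intersection spaces with no residual boundary term, and that $\widetilde{\mathbb{W}}^{1,p'}_{0,\sigma}$ is genuinely closed in $\widetilde{\mathbb{W}}_0^{1,p'}(\mathfrak{D})$. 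This is exactly where the unbounded-domain Helmholtz theory of Farwig--Kozono--Sohr and Kunstmann does the real work; once those structural facts are in hand, the transpose argument closes the proof.
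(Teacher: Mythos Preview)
The paper does not supply its own proof of this lemma; it is quoted verbatim as \cite[Proposition 3.1]{Kunstmann_2010} and used as a black box. Your proposal is a correct outline of the standard transpose-of-inclusion argument that underlies that reference, so there is nothing in the paper to compare against beyond noting that your sketch matches the approach of the cited source.
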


\begin{remark}
	We will denote $\widetilde{\mathcal{P}}_{\frac43}$ by $\mathcal{P}$ in the sequel.
\end{remark}

\subsection{Linear and nonlinear operators}
Let us define linear operator $\A:\X\to \X'$ by $\A\v:= - \mathcal{P}\Delta\v$ such that
\begin{equation*}
	\langle \A\v, \u\rangle = (\nabla\v,\nabla\u).
\end{equation*}
Remember that the operator $\A$ is a non-negative, self-adjoint operator in $\H$  and \begin{align*}
	\left<\A\v,\v\right>=\|\v\|_{\V}^2,\ \textrm{ for all }\ \v\in\X, \ \text{ so that }\ \|\A\v\|_{\X^{\prime}}\leq \|\v\|_{\X}.
\end{align*}

\medskip

Next, we define the {trilinear form} $b(\cdot,\cdot,\cdot):\X\times\X\times\X\to\R$ by $$b(\u,\v,\w)=\int_{\mathfrak{D}}(\u(x)\cdot\nabla)\v(x)\cdot\w(x)\d x= \sum_{i,j=1}^d\int_{\mathfrak{D}}\u_i(x)\frac{\partial \v_j(x)}{\partial x_i}\w_j(x)\d x.$$ We also define an operator $\B: \X\times\X \to \X'$  by $\B(\u, \v):= \mathcal{P}(\u\cdot\nabla)\v$ such that
\begin{align*}
	\left\langle   \B(\u, \v), \w  \right \rangle = b(\u,\v,\w).
\end{align*}
 Using an integration by parts, it is immediate that 
\begin{equation}\label{b0}
	\left\{
	\begin{aligned}
		b(\u,\v,\v) &= 0, && \text{ for all }\ \u,\v \in\X,\\
		b(\u,\v,\w) &=  -b(\u,\w,\v), && \text{ for all }\ \u,\v,\w\in \X.
	\end{aligned}
	\right.\end{equation}
\begin{remark}\label{rem-trilinear-ext}
	For $\u,\v\in  \mathbb{L}^{4}_{\sigma}$ and $\w\in\mathbb{W}_0^{1,4}(\mathfrak{D})$, we have 
	\begin{align*}
		|b(\u,\v,\w)|=|b(\u,\w,\v)| \leq \|\u\|_{4}\|\v\|_{4}\|\nabla\w\|_{4} \leq C \|\u\|_{4}\|\v\|_{4}\|\w\|_{\mathbb{W}^{1,4}},
	\end{align*}
	for some positive constant $C$. Thus $b$ can be uniquely extended to the trilinear form (still denoted by the same) $b:\mathbb{L}^{4}_{\sigma}\times\mathbb{L}^{4}_{\sigma} \times \mathbb{W}_0^{1,4}(\mathfrak{D}) \to \R$. In parallel, the operator $\B$ can be uniquely extended to a bounded linear operator (still denoted by the same) $\B:\mathbb{L}^{4}_{\sigma}\times\mathbb{L}^{4}_{\sigma}\to \mathbb{W}^{-1,\frac43}(\mathfrak{D})$ such that 
	\begin{align*}
		\|\B(\u,\v)\|_{\mathbb{W}^{-1,\frac43}}\leq C \|\u\|_{4}\|\v\|_{4}, \;\;\; \u,\v\in\H.
	\end{align*}
\end{remark}

\medskip

Let us now define an operator $\J: \X \to \X'$  by $\J(\v):=-\mathcal{P}\diver(\Arm(\v)\Arm(\v))$ such that 
\begin{align*}
	\langle \J(\v),\u \rangle = \frac{1}{2}\int_{\mathfrak{D}} \Arm(\v(x))\Arm(\v(x)):\Arm(\u(x))\d x.
\end{align*}
 Note that for $\v\in\X$, we have 
\begin{align*}
	\|\J(\v)\|_{\X^{\prime}} \leq C \|\v\|^2_{\X}.
\end{align*}

\begin{remark}\label{J=0_d=2}
	In case of $\mathfrak{D}\subseteq\R^2$, due to divergence-free condition, we obtain $\Tr([\Arm(\v)]^3)=0$. Hence, for $\mathfrak{D}\subseteq\R^2$, we have for $\v\in\X$
	\begin{align*}
	\langle \J(\v),\v \rangle = 	\int_{\mathfrak{D}}\Tr[(\Arm(\v(x)))^3]\d x =0.
	\end{align*}
\end{remark}

\medskip

Finally, we define an operator $\K: \X \to \X'$  by $\K(\v):=-\mathcal{P}\diver(|\Arm(\v)|^{2}\Arm(\v))$ such that 
\begin{align*}
	\langle \K(\v),\u \rangle = \frac{1}{2}\int_{\mathfrak{D}} |\Arm(\v(x))|^2\Arm(\v(x)):\Arm(\u(x)) \d x.
\end{align*} It is immediate that $$\langle\mathcal{K}(\v),\v\rangle =\frac12\|\Arm(\v)\|_{4}^{4}.$$ Note that for $\v\in\X$, we have 
\begin{align*}
	\|{\K}(\v)\|_{\X^{\prime}} \leq C \|\v\|^3_{\X},
\end{align*}

\begin{lemma}[{\cite[Lemma 2.2]{Kinra+Cipriano_TGF}}]\label{lem-Loc-Monotinicity}
	Assume that  $\varepsilon_0:=1-\sqrt{\frac{\alpha^2}{2\beta\nu}}\in(0,1)$. Then, the operator $\mathscr{G}:\X\to\X^{\prime}$ given by 
	\begin{align}\label{opr-G}
		\mathscr{G}(\cdot):=\nu\A\cdot+\B(\cdot)+\alpha\J(\cdot)+\beta\K(\cdot)
	\end{align}
	satisfies
	\begin{align}\label{CL1}
		&	\left< \mathscr{G} (\v_1) - \mathscr{G} (\v_2), \v_1-\v_2   \right> + \frac{(\mathrm{M}_d)^2}{4\nu\varepsilon_0} \|\Arm(\v_2)\|_{{4}}^2 \|\v_1-\v_2\|_{2}^2 
		  \geq \frac{\nu\varepsilon_0}{4} \|\Arm(\v_1-\v_2)\|^2_{2} +  \frac{\beta\varepsilon_0}{4}\|\Arm(\v_1-\v_2)\|^4_{4} \geq 0,
	\end{align}
	for any  $\v_1,\v_2\in\X.$
\end{lemma}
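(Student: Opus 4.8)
The plan is to set $\w:=\v_1-\v_2$ and expand the pairing $\langle \mathscr{G}(\v_1)-\mathscr{G}(\v_2),\w\rangle$ into the four contributions coming from $\nu\A$, $\B$, $\alpha\J$ and $\beta\K$, estimating each so that the viscous dissipation and the strong monotonicity of the $\beta\K$-term together dominate the indefinite lower-order terms. For the linear part, since $\A$ is linear and self-adjoint, $\nu\langle \A\v_1-\A\v_2,\w\rangle=\nu\|\nabla\w\|_2^2$; using the identity $\|\Arm(\w)\|_2^2=2\|\nabla\w\|_2^2$ (valid for divergence-free fields vanishing on $\partial\mathfrak{D}$), this equals $\tfrac{\nu}{2}\|\Arm(\w)\|_2^2$ and furnishes all of the quadratic coercivity.

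For the convective part I would write $\B(\v_1)-\B(\v_2)=\B(\v_2,\w)+\B(\w,\v_2)+\B(\w,\w)$ and test against $\w$. By the antisymmetry relations \eqref{b0}, $b(\v_2,\w,\w)=0$ and $b(\w,\w,\w)=0$, so only $b(\w,\v_2,\w)=-b(\w,\w,\v_2)$ survives. Using the embedding \eqref{Sobolev+Korn}, namely $\|\v_2\|_\infty\le\mathrm{M}_d\|\Arm(\v_2)\|_4$, together with $\|\nabla\w\|_2\le\tfrac{1}{\sqrt2}\|\Arm(\w)\|_2$, I would bound $|b(\w,\w,\v_2)|\le\|\v_2\|_\infty\|\w\|_2\|\nabla\w\|_2$ and absorb it by Young's inequality; this produces on the left exactly the term $\tfrac{(\mathrm{M}_d)^2}{4\nu\varepsilon_0}\|\Arm(\v_2)\|_4^2\|\w\|_2^2$ appearing in \eqref{CL1}, while consuming the piece $\tfrac{\nu\varepsilon_0}{4}\|\Arm(\w)\|_2^2$ that reappears as part of the final coercive quadratic term.

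The heart of the argument is the joint treatment of $\alpha\J$ and $\beta\K$. Using linearity of $\Arm$ and the (noncommutative) identity $\Arm(\v_1)^2-\Arm(\v_2)^2=\Arm(\v_1)\Arm(\w)+\Arm(\w)\Arm(\v_2)$, the $\J$-difference becomes $\langle \J(\v_1)-\J(\v_2),\w\rangle=\tfrac12\int_{\mathfrak{D}}\Tr\big((\Arm(\v_1)+\Arm(\v_2))\,\Arm(\w)^2\big)\,\d x$, which is sign-indefinite and is controlled in modulus by $\tfrac12\int_{\mathfrak{D}}(|\Arm(\v_1)|+|\Arm(\v_2)|)\,|\Arm(\w)|^2\,\d x$ (in the two-dimensional case the pure cubic self-term drops by Remark \ref{J=0_d=2}, but the bound holds uniformly in $d\in\{2,3\}$). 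For the $\K$-difference I would invoke the strong monotonicity of the four-growth map $\xi\mapsto|\xi|^2\xi$, i.e. $(|\xi_1|^2\xi_1-|\xi_2|^2\xi_2):(\xi_1-\xi_2)\ge\tfrac12(|\xi_1|^2+|\xi_2|^2)|\xi_1-\xi_2|^2$, applied pointwise to $\xi_i=\Arm(\v_i)$, which yields a coercive lower bound of order $\int_{\mathfrak{D}}(|\Arm(\v_1)|^2+|\Arm(\v_2)|^2)|\Arm(\w)|^2\,\d x$ and, via $|\Arm(\v_1)|^2+|\Arm(\v_2)|^2\ge\tfrac12|\Arm(\w)|^2$, a genuine quartic term comparable to $\|\Arm(\w)\|_4^4$.

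The delicate point, and the main obstacle, is that $\alpha\J$ is \emph{not} monotone, so its modulus must be absorbed by the leftover dissipation $\tfrac{\nu}{2}(1-\varepsilon_0)\|\Arm(\w)\|_2^2$ together with part of the monotonicity surplus of $\beta\K$. I would split the $\J$-bound by Young's inequality against these two reservoirs, the balance being dictated precisely by the condition $|\alpha|<\sqrt{2\nu\beta}$ from \eqref{third-grade-paremeters-res}, equivalently $\varepsilon_0=1-\sqrt{\alpha^2/(2\nu\beta)}\in(0,1)$: this is exactly the threshold that renders the remainder nonnegative while leaving a strictly positive coercivity constant proportional to $\varepsilon_0$. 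Collecting the surviving quadratic dissipation $\tfrac{\nu\varepsilon_0}{4}\|\Arm(\w)\|_2^2$ and the quartic term $\tfrac{\beta\varepsilon_0}{4}\|\Arm(\w)\|_4^4$ then gives \eqref{CL1}, and the final nonnegativity is immediate. The only genuinely technical step is the bookkeeping of the Young constants, in particular arranging the split so that absorbing the indefinite $\J$-term does not exhaust the $\beta\K$-coercivity required to retain the quartic term.
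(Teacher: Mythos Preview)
Your approach is correct and is essentially the one used in the cited source \cite{Kinra+Cipriano_TGF}; the present paper does not reprove the lemma but reproduces exactly the same ingredients in the proof of Lemma~\ref{PertRad-add} (see \eqref{Radius-10}--\eqref{Radius-12}), namely the Hamza--Paicu identities for $\J$ and $\K$ together with $\tfrac{\alpha^2}{4\nu(1-\varepsilon_0)}=\tfrac{\beta(1-\varepsilon_0)}{2}$, and the treatment of the convective term via \eqref{Sobolev+Korn} and Young's inequality as in \eqref{Radius-5}. Your outline matches this line by line, including the crucial splitting that leaves $\tfrac{\nu\varepsilon_0}{4}\|\Arm(\w)\|_2^2+\tfrac{\beta\varepsilon_0}{4}\|\Arm(\w)\|_4^4$ after absorption.
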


\subsection{Abstract formulation and weak  solution}  Let us write the abstract formulation to system \eqref{equationV1} by taking the projection $\mathcal{P}$ as: 
\begin{equation}\label{TGF}
	\left\{
	\begin{aligned}
		\frac{\d\v(t)}{\d t}+\nu\A\v(t)+\B(\v(t))+\alpha\J(\v(t))+\beta\K(\v(t))& = \mathcal{P}\f(t), \ t\in (0,\infty),\\
		\v(0)&=\boldsymbol{\psi}\in\H,
	\end{aligned}
	\right.
\end{equation}
where $\f\in \H^{-1}(\mathfrak{D})$.  Let us now provide the notion	of weak	solution to system \eqref{TGF}.	
\begin{definition}\label{def-WS-TGF}
	A function $\v(\cdot)$ is called a \textit{weak solution} to system \eqref{equationV1} on time interval $[0, \infty)$, if $$\v \in  \mathrm{C}([0,\infty); \H) \cap \mathrm{L}^{2}_{\mathrm{loc}}(0,\infty; \V)\cap\mathrm{L}^{4}_{\mathrm{loc}}(0,\infty; \mathbb{W}^{1,4}_0(\mathfrak{D})),$$   $$\frac{\d\v}{\d t}\in\mathrm{L}^{2}_{\mathrm{loc}}(0,\infty;\V^{\prime})+\mathrm{L}^{\frac43}_{\mathrm{loc}}(0,\infty;\mathbb{W}^{-1,\frac43}(\mathfrak{D})),$$ and it satisfies 
	\begin{itemize}
		\item [(i)] for any $\psi\in \X,$ 
		\begin{align*}
			\left<\frac{\d\v(t)}{\d t}, \psi\right>&=  - \left\langle \nu \A\v(t)+\B(\v(t)) + \alpha \J(\v(t))+\beta \K(\v(t)) -\f , \psi \right\rangle,
		\end{align*}
		for a.e. $t\in[0,\infty);$
		\item [(ii)] the initial data:
		$$\v(0)=\boldsymbol{\psi} \ \text{ in }\ \H.$$
	\end{itemize}
\end{definition}

Next theorem  provides the existence and uniqueness of global weak solutions.

\begin{theorem}\label{solution}
		Let the condition \eqref{third-grade-paremeters-res} hold. For $\boldsymbol{\psi} \in \H$ and $\f\in \H^{-1}(\mathfrak{D})$, there exists a unique weak solution $\v(\cdot)$ to system \eqref{equationV1} in the sense of Definition \ref{def-WS-TGF}.
	\end{theorem}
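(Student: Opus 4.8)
The plan is to construct the solution by a Faedo--Galerkin approximation and then identify the limit using the local monotonicity estimate of Lemma \ref{lem-Loc-Monotinicity}. First I would fix an orthonormal basis $\{e_k\}_{k\geq1}$ of $\H$ consisting of sufficiently regular eigenfunctions of the Stokes operator $\A$, denote by $\mathbb{P}_n$ the orthogonal projection of $\H$ onto $\H_n:=\mathrm{span}\{e_1,\dots,e_n\}$, and seek $\v_n(t)\in\H_n$ solving the projected system
\[
\frac{\d\v_n}{\d t}+\mathbb{P}_n\Bigl(\nu\A\v_n+\B(\v_n)+\alpha\J(\v_n)+\beta\K(\v_n)\Bigr)=\mathbb{P}_n\mathcal{P}\f,\qquad \v_n(0)=\mathbb{P}_n\x.
\]
Since the projected nonlinearity is locally Lipschitz on the finite-dimensional space $\H_n$, the Cauchy--Lipschitz/Carath\'eodory theorem yields a unique local solution $\v_n$; the a priori bounds below promote it to a global one.

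Next I would derive uniform a priori estimates. Testing the Galerkin system with $\v_n$, using $\langle\B(\v_n),\v_n\rangle=b(\v_n,\v_n,\v_n)=0$ and the coercivity obtained by setting $\v_2=\boldsymbol{0}$ in \eqref{CL1} (so that $\langle\mathscr{G}(\v_n),\v_n\rangle\geq\frac{\nu\varepsilon_0}{4}\|\Arm(\v_n)\|_2^2+\frac{\beta\varepsilon_0}{4}\|\Arm(\v_n)\|_4^4$), and absorbing $\langle\f,\v_n\rangle$ via Young's and Poincar\'e's inequalities \eqref{Poin}, I obtain a differential inequality of the form
\[
\frac{\d}{\d t}\|\v_n\|_2^2+\frac{\nu\varepsilon_0}{2}\|\v_n\|_{\V}^2+\frac{\beta\varepsilon_0}{2}\|\Arm(\v_n)\|_4^4\leq \frac{C}{\nu}\|\f\|_{\V'}^2.
\]
Integrating gives uniform bounds for $\v_n$ in $\mathrm{L}^\infty(0,T;\H)\cap\mathrm{L}^2(0,T;\V)\cap\mathrm{L}^4(0,T;\mathbb{W}^{1,4}_0(\mathfrak{D}))$ on every $[0,T]$; the operator bounds $\|\A\v\|_{\X'}\leq\|\v\|_\X$, $\|\B(\v)\|_{\mathbb{W}^{-1,\frac43}}\leq C\|\v\|_4^2$, $\|\J(\v)\|_{\X'}\leq C\|\v\|_\X^2$ and $\|\K(\v)\|_{\X'}\leq C\|\v\|_\X^3$ then bound $\frac{\d\v_n}{\d t}$ uniformly in $\mathrm{L}^2(0,T;\V')+\mathrm{L}^{\frac43}(0,T;\mathbb{W}^{-1,\frac43}(\mathfrak{D}))$, precisely the regularity of Definition \ref{def-WS-TGF}.

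By Banach--Alaoglu I would extract a subsequence with $\v_n\rightharpoonup\v$ in $\mathrm{L}^2(0,T;\V)\cap\mathrm{L}^4(0,T;\mathbb{W}^{1,4}_0)$, weakly-$*$ in $\mathrm{L}^\infty(0,T;\H)$, and $\mathscr{G}(\v_n)\rightharpoonup\mathscr{G}_0$ in $\mathrm{L}^{\frac43}(0,T;\X')$, so that the limit satisfies $\frac{\d\v}{\d t}=\mathcal{P}\f-\mathscr{G}_0$. The crux is identifying $\mathscr{G}_0=\mathscr{G}(\v)$, i.e. passing to the limit in the monotone nonlinearities (chiefly the cubic $\K$), and I expect this to be the main obstacle. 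The key point I would exploit is that $\B$ is \emph{part} of $\mathscr{G}$ in \eqref{opr-G}, so all nonlinear terms can be handled simultaneously by the Minty--Browder monotonicity trick \emph{without any compactness} — this is essential because $\mathfrak{D}$ may be unbounded, where $\V\hookrightarrow\H$ is not compact. Concretely, I would apply \eqref{CL1} with $\v_1=\v_n$ and $\v_2=\phi:=\v-\lambda\boldsymbol{\vartheta}$ for arbitrary $\boldsymbol{\vartheta}\in\X$, use the Galerkin energy equality together with weak lower semicontinuity of $\mathord{\|}\v_n(T)\mathord{\|}_2^2$ to bound $\limsup_n\int_0^T\langle\mathscr{G}(\v_n),\v_n\rangle\,\d t$ from above by $\int_0^T\langle\mathscr{G}_0,\v\rangle\,\d t$, and absorb the \emph{local} term $\frac{(\mathrm{M}_d)^2}{4\nu\varepsilon_0}\|\Arm(\phi)\|_4^2\|\v_n-\phi\|_2^2$ through an exponential weight $e^{-\int_0^t\kappa(s)\d s}$; letting $\lambda\to0^+$ then gives $\mathscr{G}_0=\mathscr{G}(\v)$. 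The continuity $\v\in\mathrm{C}([0,T];\H)$ follows from the established regularity by a Lions--Magenes interpolation lemma, and comparing against test functions vanishing at $t=T$ together with $\mathbb{P}_n\x\to\x$ yields $\v(0)=\x$.

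Finally, uniqueness is immediate from Lemma \ref{lem-Loc-Monotinicity}: for two weak solutions $\v_1,\v_2$ with the same data, subtracting the equations and testing with $\v_1-\v_2$ gives, by \eqref{CL1},
\[
\frac12\frac{\d}{\d t}\|\v_1-\v_2\|_2^2\leq \frac{(\mathrm{M}_d)^2}{4\nu\varepsilon_0}\|\Arm(\v_2)\|_4^2\,\|\v_1-\v_2\|_2^2,
\]
and since $t\mapsto\|\Arm(\v_2(t))\|_4^2\in\mathrm{L}^2(0,T)\subset\mathrm{L}^1(0,T)$ by the regularity of $\v_2$, Gr\"onwall's inequality with $\v_1(0)=\v_2(0)$ forces $\v_1\equiv\v_2$.
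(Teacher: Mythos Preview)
The paper does not actually prove Theorem~\ref{solution}: it is stated without proof, presumably as a known result from the earlier works \cite{Hamza+Paicu_2007,Kinra+Cipriano_TGF,yas-fer_JNS} cited in the surrounding discussion. Your Faedo--Galerkin scheme combined with the Minty--Browder identification via the local monotonicity estimate \eqref{CL1} (with the exponential weight to absorb the $\|\Arm(\phi)\|_4^2\|\v_n-\phi\|_2^2$ term) is exactly the standard route taken in that literature, and your uniqueness argument via \eqref{CL1} and Gr\"onwall is correct.

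One point deserves care: you propose to use an orthonormal basis of eigenfunctions of the Stokes operator $\A$, but on an unbounded Poincar\'e domain $\A$ need not have compact resolvent, so such a basis may not exist. The usual fix (and the one implicit in \cite{Kinra+Cipriano_TGF,Kinra+Cipriano_STGF}) is to work instead with a smooth basis of $\X=\mathbb{W}^{1,4}_0(\mathfrak{D})\cap\V$ obtained from an auxiliary operator with compact inverse (e.g.\ constructed via a compact embedding $\mathbb{U}\hookrightarrow\V_s\hookrightarrow\V$ for suitable $s>2$). Since your argument nowhere uses spectral properties of $\A$ beyond having a dense family of finite-dimensional subspaces of $\X$ on which to project, this is a cosmetic adjustment and the rest of your plan goes through unchanged.
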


	\section{Existence of singleton attractor with small forcing intensity} \label{sec3}\setcounter{equation}{0}
	
	It is evident from the works \cite{Kinra+Cipriano_TGF,Kinra+Cipriano_STGF} that the system \eqref{TGF} possesses a unique global attractor. In this section, we establish that under small forcing intensity  the attractor is a singleton set.	The following result is presented in light of the work  \cite{Kinra+Cipriano_TGF} (see \cite{Kinra+Cipriano_STGF} also), which establishes the existence of global attractors for system \eqref{TGF}.
	\begin{theorem}\label{thm-global-attractor}
		Assume that condition \eqref{third-grade-paremeters-res} is fulfilled and that Hypothesis \ref{assumpO} holds. Let $\mathfrak{D}$ denote either a bounded or an unbounded domain, and suppose that $\f \in \H^{-1}(\mathfrak{D})$ in the former case or $\f \in \L^{2}(\mathfrak{D})$ in the latter. Then, the continuous semigroup $\S(\cdot)$ associated with system \eqref{TGF} admits a unique global attractor $\mathscr{A}$.
	\end{theorem}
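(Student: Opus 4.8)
The plan is to verify the two hypotheses of the classical theory of global attractors for the continuous semigroup $\S(\cdot)$ on $\H$: existence of a bounded absorbing set and asymptotic compactness. Theorem \ref{solution} guarantees that $\S(t)\x := \v(t)$ is well defined for every $\x\in\H$, and the monotonicity estimate \eqref{CL1} of Lemma \ref{lem-Loc-Monotinicity}, applied to the difference of two solutions and combined with Gronwall's inequality, yields continuous dependence on the initial datum in $\H$; hence $\S(\cdot)$ is a continuous semigroup. The uniqueness assertion is automatic once existence is established, so the whole content is the construction of $\mathscr{A}$.

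For the absorbing set I would test \eqref{TGF} with $\v$. Since $\langle\B(\v),\v\rangle = b(\v,\v,\v)=0$ by \eqref{b0}, specializing \eqref{CL1} to $\v_2=\boldsymbol{0}$ (so that $\mathscr{G}(\boldsymbol{0})=\boldsymbol{0}$ and $\Arm(\boldsymbol{0})=\boldsymbol{0}$) gives the coercivity bound $\langle\mathscr{G}(\v),\v\rangle \geq \frac{\nu\varepsilon_0}{4}\|\Arm(\v)\|_{2}^2 + \frac{\beta\varepsilon_0}{4}\|\Arm(\v)\|_{4}^4$, where $\varepsilon_0\in(0,1)$ thanks to $|\alpha|<\sqrt{2\nu\beta}$ in \eqref{third-grade-paremeters-res}. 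Using the identity $\|\Arm(\v)\|_{2}^2 = 2\|\v\|_{\V}^2$ valid for divergence-free fields, Young's inequality on $\langle\mathcal{P}\f,\v\rangle$, and the Poincar\'e inequality \eqref{Poin}, one reaches a differential inequality of the form $\frac{\d}{\d t}\|\v\|_{2}^2 + c_1\|\v\|_{2}^2 \leq c_2\|\f\|_{\V'}^2$ with constants $c_1,c_2>0$ depending only on $\nu,\varepsilon_0,\lambda$. Gronwall's lemma then produces a ball $\mathscr{B}$ in $\H$ that absorbs every bounded set, and integrating the same inequality additionally bounds $\int_t^{t+1}\bigl(\|\v(s)\|_{\V}^2+\|\Arm(\v(s))\|_{4}^4\bigr)\d s$ uniformly, a bound I will reuse below.

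The main obstacle is asymptotic compactness, since on an unbounded Poincar\'e domain the embedding $\V\hookrightarrow\H$ is not compact and $\V$-bounds alone do not suffice. I would therefore use the energy-equation (Ball--Rosa) method: for $t_n\to\infty$ and $\x_n\in\mathscr{B}$, the family $\S(t_n)\x_n$ is bounded in $\H$ and hence weakly convergent along a subsequence to some $\xi$; translating time and extracting diagonally realizes $\xi$ as the value at time $0$ of a bounded complete trajectory. Strong convergence in $\H$ then follows by combining weak lower semicontinuity of the norm with the reverse bound $\limsup_n\|\S(t_n)\x_n\|_{2}\leq\|\xi\|_{2}$, which is read off from the energy equality satisfied by the solutions. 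The delicate points are justifying this energy equality for weak solutions in the class of Definition \ref{def-WS-TGF} and passing to the limit in the nonlinear terms $\J$ and $\K$; here the monotonicity structure \eqref{CL1} permits a Minty-type identification of the weak limits of $\J(\v_n)$ and $\K(\v_n)$. On a bounded domain this step is easier, as the $\V$-bound together with the compact embedding $\V\hookrightarrow\H$ delivers asymptotic compactness directly.

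With a bounded absorbing set and asymptotic compactness in hand, the standard existence theorem for attractors of continuous semigroups yields a compact invariant set $\mathscr{A}=\omega(\mathscr{B})$ attracting all bounded subsets of $\H$, which is the unique global attractor. Throughout, the verification of the weak-continuity and energy-equality steps follows the scheme already carried out for this system in \cite{Kinra+Cipriano_TGF,Kinra+Cipriano_STGF}, to which the routine details can be referred.
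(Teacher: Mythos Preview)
The paper does not actually prove this theorem: it is stated without proof as a direct consequence of the earlier works \cite{Kinra+Cipriano_TGF,Kinra+Cipriano_STGF}, which the sentence preceding the theorem explicitly invokes. Your proposal is therefore not competing with a proof in the paper but rather sketching the argument that those references contain, and you acknowledge as much in your final paragraph.

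That said, your outline is essentially correct and matches the strategy of \cite{Kinra+Cipriano_TGF}: a bounded absorbing ball in $\H$ from the energy inequality (your use of \eqref{CL1} with $\v_2=\boldsymbol{0}$ is a legitimate shortcut to coercivity, though the paper later derives the sharper constants $\nu^*,\beta^*$ of \eqref{nu_ast} by estimating $\J$ and $\K$ directly), and asymptotic compactness via the Ball--Rosa energy-equation method on unbounded Poincar\'e domains, where the compact embedding $\V\hookrightarrow\H$ fails. The one point worth flagging is that the energy \emph{equality} (not just inequality) needed for the Ball--Rosa argument requires the weak solution to have enough regularity to justify testing with itself; for this class of solutions that is indeed available, but it is the nontrivial technical input and is precisely what \cite{Kinra+Cipriano_TGF} supplies.
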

	%
	%
	%

In order to show that the global attractor $\mathscr{A}$ has singleton structure, we need the following smallness assumption on the external forcing $\f$:

\begin{hypothesis}\label{hypo-small-force}
	We assume that $\f$ satisfies the following condition:
	\begin{align}\label{C_2}
		\varrho:=\nu\varepsilon_0\lambda  -  \frac{(\mathrm{M}_d)^2}{2\nu\varepsilon_0} \left( \frac{1}{\beta^*\nu^{*}}\right)^{\frac12} \left( \frac{1}{2} + \frac{1}{\nu^*\lambda} \right)^{\frac12} \|\f\|_{\H^{-1}} >0,
	\end{align}
	where $\mathrm{M}_d$ is the constant appearing in \eqref{Sobolev+Korn}.
\end{hypothesis}

The following lemma tells us about some useful estimated satisfied by the weak solution of system \eqref{equationV1} which will be used in the sequel.

\begin{lemma}\label{lem-Absorb}
Assume that $\f\in \H^{-1}(\mathfrak{D})$, the condition \eqref{third-grade-paremeters-res} is satisfied and the Hypothesis \ref{assumpO} holds. Then, for any bounded set $B_{\H}\subset \H$  and for $\wp\in(0,   \nu^*\lambda]$, there exists a time $\mathcal{T}_{\wp}>0$ such that any weak solution $\v(\cdot)$ of system \eqref{TGF} with initial data in $B_{\H}$ satisfies
	\begin{align}\label{absorb1}
	\|\v(t)\|^2_{2} + \beta^* \int_{0}^{t} e^{-\wp (t- \xi ) } \|\Arm(\v(\xi))\|^4_{4} \d s &\leq  \frac{\|\f\|_{\H^{-1}}^2}{\nu^*\wp}, \;\; \; \text{	for all }\; t\geq \mathcal{T}_{\wp},
\end{align}
and 
\begin{align}\label{absorb2}
	\int_{s}^{t} \|\Arm(\v(\xi))\|^2_{4} \d \xi \leq  \left( \frac{1}{\beta^*\nu^{*}}\right)^{\frac12} \left( \frac{1}{2} + \frac{1}{\nu^*\lambda} \right)^{\frac12} \|\f\|_{\H^{-1}} (t-s) ,\;\; \; \text{	for all } s\geq \mathcal{T}_{ \nu^*\lambda} \; \text{ and } \; t> s+1,
\end{align}
for any $\boldsymbol{\psi}\in B_{\H}$, where 
\begin{align}\label{nu_ast}
	\nu^{\ast}:=\begin{cases}
		\nu, & for \;\; d=2;\\
		\frac{\nu}{2}\left(1+\varepsilon_0\right), & for \;\; d=3,
	\end{cases} 
	\;\;\; \text{ and } \;\;\; \beta^{\ast}:=\begin{cases}
		\beta, & for \;\; d=2;\\
		\beta\varepsilon_0, & for \;\; d=3.
	\end{cases} 
\end{align}
\end{lemma}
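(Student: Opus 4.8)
The plan is to derive both estimates from the basic energy balance obtained by pairing \eqref{TGF} with the solution $\v(t)$ itself. For the weak solution of Definition \ref{def-WS-TGF} the pairing $\langle\frac{\d\v}{\d t},\v\rangle$ is meaningful on the Gelfand-type scale and the chain rule $\frac{\d}{\d t}\|\v\|_2^2=2\langle\frac{\d\v}{\d t},\v\rangle$ holds for a.e.\ $t$ (this is the standard interpolation/approximation fact underlying Theorem \ref{solution}). This gives the energy identity $\frac12\frac{\d}{\d t}\|\v\|_2^2+\langle\mathscr{G}(\v),\v\rangle=\langle\f,\v\rangle$. The first task is a sharp coercive lower bound for $\langle\mathscr{G}(\v),\v\rangle$. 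Using $b(\v,\v,\v)=0$ from \eqref{b0}, the identity $\langle\K(\v),\v\rangle=\frac12\|\Arm(\v)\|_4^4$, and the Korn-type identity $\|\Arm(\v)\|_2^2=2\|\v\|_{\V}^2$ valid on $\V$, I obtain $\langle\mathscr{G}(\v),\v\rangle=\nu\|\v\|_{\V}^2+\alpha\langle\J(\v),\v\rangle+\frac{\beta}{2}\|\Arm(\v)\|_4^4$. For $d=2$, Remark \ref{J=0_d=2} gives $\langle\J(\v),\v\rangle=0$, so this already equals $\nu^{*}\|\v\|_{\V}^2+\frac{\beta^{*}}{2}\|\Arm(\v)\|_4^4$ with $\nu^{*}=\nu$, $\beta^{*}=\beta$. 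For $d=3$ I estimate the sign-indefinite cubic term by $|\langle\J(\v),\v\rangle|\le\tfrac12\|\Arm(\v)\|_2\|\Arm(\v)\|_4^2$ (from the pointwise bound $|\Tr[(\Arm(\v))^3]|\le|\Arm(\v)|^3$ and Cauchy--Schwarz) and write $\nu\|\v\|_{\V}^2=\frac{\nu}{2}\|\Arm(\v)\|_2^2$; the resulting quadratic form in $(\|\Arm(\v)\|_2,\|\Arm(\v)\|_4^2)$ has a nonnegative remainder precisely at the threshold $|\alpha|<\sqrt{2\nu\beta}$ (that is, $\varepsilon_0\in(0,1)$), and a direct discriminant computation—which turns out to be the equality case—yields $\langle\mathscr{G}(\v),\v\rangle\ge\nu^{*}\|\v\|_{\V}^2+\frac{\beta^{*}}{2}\|\Arm(\v)\|_4^4$ with exactly the constants $\nu^{*},\beta^{*}$ of \eqref{nu_ast}.

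Second, I convert the energy identity into a differential inequality. Bounding the forcing by $\langle\f,\v\rangle\le\|\f\|_{\H^{-1}}\|\v\|_{\V}\le\frac{\nu^{*}}{2}\|\v\|_{\V}^2+\frac{1}{2\nu^{*}}\|\f\|_{\H^{-1}}^2$ and absorbing, I reach $\frac{\d}{\d t}\|\v\|_2^2+\nu^{*}\|\v\|_{\V}^2+\beta^{*}\|\Arm(\v)\|_4^4\le\frac{\|\f\|_{\H^{-1}}^2}{\nu^{*}}$. For \eqref{absorb1} I invoke the Poincaré inequality \eqref{Poin}, $\nu^{*}\|\v\|_{\V}^2\ge\nu^{*}\lambda\|\v\|_2^2\ge\wp\|\v\|_2^2$ for $\wp\in(0,\nu^{*}\lambda]$, giving $\frac{\d}{\d t}\|\v\|_2^2+\wp\|\v\|_2^2+\beta^{*}\|\Arm(\v)\|_4^4\le\frac{\|\f\|_{\H^{-1}}^2}{\nu^{*}}$. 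Multiplying by the integrating factor $e^{\wp t}$, integrating over $[0,t]$, and multiplying back by $e^{-\wp t}$ produces $\|\v(t)\|_2^2+\beta^{*}\int_0^t e^{-\wp(t-\xi)}\|\Arm(\v(\xi))\|_4^4\,\d\xi\le e^{-\wp t}\|\x\|_2^2+\frac{\|\f\|_{\H^{-1}}^2}{\nu^{*}\wp}(1-e^{-\wp t})$. Since $\|\x\|_2$ is uniformly bounded over $B_{\H}$, the transient term $e^{-\wp t}\|\x\|_2^2$ is controlled after a time $\mathcal{T}_{\wp}=\mathcal{T}_{\wp}(B_{\H})$, which yields \eqref{absorb1}.

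Third, for \eqref{absorb2} I integrate the energy inequality \emph{before} reducing it by Poincaré, so as not to waste the forcing. Starting from $\frac12\frac{\d}{\d t}\|\v\|_2^2+\nu^{*}\|\v\|_{\V}^2+\frac{\beta^{*}}{2}\|\Arm(\v)\|_4^4\le\|\f\|_{\H^{-1}}\|\v\|_{\V}$ and integrating over $[s,t]$, I estimate the forcing integral by Cauchy--Schwarz in time followed by Young, namely $\int_s^t\|\f\|_{\H^{-1}}\|\v\|_{\V}\,\d\xi\le\|\f\|_{\H^{-1}}(t-s)^{1/2}\big(\int_s^t\|\v\|_{\V}^2\,\d\xi\big)^{1/2}\le\nu^{*}\int_s^t\|\v\|_{\V}^2\,\d\xi+\frac{\|\f\|_{\H^{-1}}^2(t-s)}{4\nu^{*}}$; this is where the factor $\tfrac12$ in the target constant originates. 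Cancelling the $\|\v\|_{\V}^2$ integrals leaves $\beta^{*}\int_s^t\|\Arm(\v)\|_4^4\,\d\xi\le\|\v(s)\|_2^2+\frac{\|\f\|_{\H^{-1}}^2(t-s)}{2\nu^{*}}$. Applying \eqref{absorb1} with $\wp=\nu^{*}\lambda$ gives $\|\v(s)\|_2^2\le\frac{\|\f\|_{\H^{-1}}^2}{(\nu^{*})^2\lambda}$ for $s\ge\mathcal{T}_{\nu^{*}\lambda}$, and since $t-s>1$ the constant term is dominated by the linear one, so $\int_s^t\|\Arm(\v)\|_4^4\,\d\xi\le\frac{\|\f\|_{\H^{-1}}^2}{\beta^{*}\nu^{*}}\big(\frac12+\frac{1}{\nu^{*}\lambda}\big)(t-s)$. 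A final Cauchy--Schwarz in time, $\int_s^t\|\Arm(\v)\|_4^2\,\d\xi\le(t-s)^{1/2}\big(\int_s^t\|\Arm(\v)\|_4^4\,\d\xi\big)^{1/2}$, produces exactly \eqref{absorb2}.

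The principal obstacle is the three-dimensional coercivity: controlling the indefinite cubic term $\alpha\langle\J(\v),\v\rangle$ so that the dissipation supplied by $\nu\A$ and $\beta\K$ is only partially consumed, leaving the sharp residual coefficients $\nu^{*}$ and $\beta^{*}$. Because this is exactly the threshold case of $\varepsilon_0\in(0,1)$, the Young split must be calibrated precisely—any slack degrades the constants and breaks the clean identities of \eqref{nu_ast}. A secondary point is the rigorous justification of the energy identity for the merely weak solution (whose time derivative lives in a sum of dual spaces), handled by the standard approximation argument behind Theorem \ref{solution}. The remaining steps—Young, Poincaré, the integrating-factor Gronwall argument, and the two Cauchy--Schwarz applications in time—are routine, the only care being to track constants so as to land on the exact right-hand sides.
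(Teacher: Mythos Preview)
Your approach matches the paper's: energy identity, a coercive lower bound on $\langle\mathscr{G}(\v),\v\rangle$ via the calibrated Young split on the cubic $\J$-term (which you phrase as a zero-discriminant computation and the paper writes out as the explicit split $|\tfrac{\alpha}{2}\int\Tr[(\Arm(\v))^3]|\le\tfrac{\nu(1-\varepsilon_0)}{4}\|\Arm(\v)\|_2^2+\tfrac{\beta(1-\varepsilon_0)}{2}\|\Arm(\v)\|_4^4$), Young on the forcing, Poincar\'e, integrating-factor Gronwall for \eqref{absorb1}, and time-integration plus Cauchy--Schwarz in time for \eqref{absorb2}. For \eqref{absorb2} your Cauchy--Schwarz-in-time treatment of the forcing integral is a harmless variant of the paper's direct integration of the already-split inequality; both land on $\beta^{*}\int_s^t\|\Arm(\v)\|_4^4\,\d\xi\le\|\v(s)\|_2^2+\tfrac{\|\f\|_{\H^{-1}}^2}{2\nu^{*}}(t-s)$.

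There is one constant-level slip in your argument for \eqref{absorb1}. Your Young split $\langle\f,\v\rangle\le\tfrac{\nu^{*}}{2}\|\v\|_{\V}^2+\tfrac{1}{2\nu^{*}}\|\f\|_{\H^{-1}}^2$ produces the differential inequality with right-hand side $\tfrac{\|\f\|_{\H^{-1}}^2}{\nu^{*}}$, hence after Gronwall the bound $e^{-\wp t}\|\x\|_2^2+\tfrac{\|\f\|_{\H^{-1}}^2}{\nu^{*}\wp}(1-e^{-\wp t})$. For initial data with $\|\x\|_2^2>\tfrac{\|\f\|_{\H^{-1}}^2}{\nu^{*}\wp}$ this quantity strictly exceeds $\tfrac{\|\f\|_{\H^{-1}}^2}{\nu^{*}\wp}$ at every finite time, so the transient cannot be ``controlled after a time $\mathcal{T}_{\wp}$'' into the stated bound---you have no slack. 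The paper's device is to arrange a factor $\tfrac12$ of headroom (asymptotic constant $\tfrac{\|\f\|_{\H^{-1}}^2}{2\nu^{*}\wp}$), so that once $e^{-\wp t}\|\x\|_2^2$ drops below $\tfrac{\|\f\|_{\H^{-1}}^2}{2\nu^{*}\wp}$ the total is $\le\tfrac{\|\f\|_{\H^{-1}}^2}{\nu^{*}\wp}$ and $\mathcal{T}_{\wp}$ can be chosen uniformly over $B_{\H}$. You should tighten your forcing estimate (e.g.\ absorb only $\tfrac{\nu^{*}}{4}\|\Arm(\v)\|_2^2$ rather than the full $\tfrac{\nu^{*}}{2}\|\v\|_{\V}^2$) to create this slack.
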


\begin{proof}
	By taking the inner product with $\v(\cdot)$ to the equation $\eqref{TGF}_1$, and using $\langle \B(\v),\v\rangle =0$, we find
	\begin{align}\label{Sing-1}
		\frac{1}{2}\frac{\d}{\d \xi}\|\v(\xi)\|^2_{2}
		&=-\frac{\nu}{2}\|\Arm(\v(\xi))\|^2_{2} - \frac{\beta}{2}\|\Arm(\v(\xi))\|^4_{4} + \frac{\alpha}{2} \int_{\mathfrak{D}}(\Arm(\v(x,\xi)))^2:\Arm(\v(x,\xi))\d x +\langle\f(\xi),\v(\xi)\rangle,
	\end{align}
	for a.e. $\xi\in[0,T]$ with $T>0$. Making use of the H\"older's inequality, \eqref{third-grade-paremeters-res} and Young's inequality, we obtain  
	\begin{align}\label{Sing-2}
		\left\vert\frac{\alpha}{2} \int_{\mathfrak{D}}(\Arm(\v))^2:\Arm(\v)\d x \right\vert & =	\left\vert\frac{\alpha}{2} \int_{\mathfrak{D}}\Tr[(\Arm(\v))^3]\d x \right\vert
		\leq  
		\begin{cases}
			0, & d=2;\\
			\frac{\nu(1-\varepsilon_0)}{4} \|\Arm(\v)\|^2_{2}  + \frac{\beta(1-\varepsilon_0)}{2}  \|\Arm(\v)\|^4_{4},&  d=3,
		\end{cases}
	\end{align}
	where we have also used Remark \ref{J=0_d=2} for $d=2$. Using the Cauchy-Schwarz inequality, \eqref{Poin}, \eqref{Korn-ineq} and Young's inequality, we get 
	\begin{align}\label{Sing-3}
		|\langle\f,\v\rangle|
		&\leq \|\f\|_{\H^{-1}}\|\v\|_{\V} 
	 \leq \frac12\|\f\|_{\H^{-1}}\|\Arm(\v)\|_{2}
	 \leq \frac{\nu^{\ast}}{4}\|\Arm(\v)\|_{2}^2  + \frac{\|\f\|_{\H^{-1}}^2}{4\nu^*}, 
	\end{align}
where $\nu^*>0$ is given by \eqref{nu_ast}.	Combining \eqref{Sing-1}-\eqref{Sing-3}, we obtain	
	\begin{align}\label{3p8}
		&\frac{\d}{\d \xi}\|\v(\xi)\|^2_{2} +\frac{\nu^{\ast}}{2}\|\Arm(\v(\xi))\|^2_{2} + \beta^*\|\Arm(\v(\xi))\|^4_{4}
		\leq \frac{\|\f\|_{\H^{-1}}^2}{2\nu^*},
	\end{align}
	for a.e. $\xi\in[0,T]$ with $T>0$, where  $\nu^*, \beta^*>0$ are given by \eqref{nu_ast},  which due to Poincar\'e inequality \eqref{Poin} implies
	\begin{align}\label{Sing-4}
		&\frac{\d}{\d\xi}\|\v(\xi)\|^2_{2} + \nu^{\ast}\lambda \|\v(\xi)\|^2_{2}   + \beta^*\|\Arm(\v(\xi))\|^4_{4}
		\leq \frac{\|\f\|_{\H^{-1}}^2}{2\nu^*} ,
	\end{align}
	for a.e. $\xi\in[0, T]$ with $T>0$, where $\nu^{\ast}>0$ is a constant defined in \eqref{nu_ast}. Using the variation of constants formula, for any $0< \wp \leq \nu^*\lambda$, we write 
	\begin{align}\label{Sing-5}
		e^{\wp t}\|\v(t)\|^2_{2} + \beta^* \int_{0}^{t} e^{\wp s}\|\Arm(\v(s))\|^4_{4} \d s &\leq  \|\boldsymbol{\psi}\|^2_{2} +  \frac{\|\f\|_{\H^{-1}}^2}{2\nu^*} \int_{0}^{t}e^{\wp \zeta} \d\zeta = \|\boldsymbol{\psi}\|^2_{2} +  \frac{\|\f\|_{\H^{-1}}^2}{2\nu^*\wp } (e^{\wp t} -1 ),
	\end{align}
which implies
\begin{align*}
	\|\v(t)\|^2_{2}  + \beta^* \int_{0}^{t} e^{-\wp (t- \xi ) } \|\Arm(\v(\xi))\|^4_{4} \d \xi &\leq  \|\boldsymbol{\psi}\|^2_{2}e^{-\wp t} +  \frac{\|\f\|_{\H^{-1}}^2}{2\nu^*\wp},
\end{align*}
	for all $t>0$. There exists a time $\mathcal{T}_{\wp}>0$  such that  
	\begin{align}\label{Sing-6}
		\|\v(t)\|^2_{2}  + \beta^* \int_{0}^{t} e^{-\wp (t- \xi ) } \|\Arm(\v(\xi))\|^4_{4} \d \xi   &\leq  \frac{\|\f\|_{\H^{-1}}^2}{\nu^*\wp }, \;\; \; \text{	for all } t\geq \mathcal{T}_{\wp}.
	\end{align}
Now, integrating \eqref{3p8} from $s\geq \mathcal{T}_{\nu^*\lambda}$ to $t \geq s+1$ and using \eqref{Sing-6}, we get
	\begin{align}
	 \beta^*\int_{s}^{t} \|\Arm(\v(\xi))\|^4_{4} \d \xi
	& \leq  \|\v(s)\|^2_{2}+  \frac{\|\f\|_{\H^{-1}}^2}{2\nu^*} (t-s)
	  \leq \frac{\|\f\|_{\H^{-1}}^2}{(\nu^*)^2\lambda} +  \frac{\|\f\|_{\H^{-1}}^2}{2\nu^*} (t-s) ,  \nonumber
	 \\ & \leq \left[\frac{1}{(\nu^*)^2\lambda} +  \frac{1}{2\nu^*}\right] \|\f\|_{\H^{-1}}^2 (t-s),  
\end{align}
which gives
\begin{align*}
\int_{s}^{t} \|\Arm(\v(\xi))\|^2_{4} \d \xi \leq 	(t-s)^{\frac{1}{2}}\left(\int_{s}^{t} \|\Arm(\v(\xi))\|^4_{4} \d \xi \right)^{\frac{1}{2}}
	& \leq  \left( \frac{1}{\beta^*}\right)^{\frac12} \left[\frac{1}{(\nu^*)^2\lambda} +  \frac{1}{2\nu^*}\right]^{\frac12} \|\f\|_{\H^{-1}} (t-s) ,
\end{align*}
for all $s\geq \mathcal{T}_{\nu^*\lambda}$ and $t \geq s+1$.	This completes the proof.
\end{proof}

Next result is crucial to obtain the singleton structure of global attractor $\mathscr{A}$.
\begin{lemma}\label{Uniqueness}
	Assume that $\f\in \H^{-1}(\mathfrak{D})$, the condition \eqref{third-grade-paremeters-res} is satisfied and the Hypothesis \ref{assumpO} holds. For any bounded set $B_{\H}\subset \H$ and any two weak solutions $\v_1(\cdot)$ and $\v_2(\cdot)$ of system \eqref{TGF} in the sense of Definition \ref{def-WS-TGF} with initial data in $B_{\H}$ satisfy
	\begin{align}\label{Unique}
		\|\v_1(t_2)-\v_2(t_2)\|^2_{2} \leq  \|\v_1(t_1)-\v_2(t_1)\|^2_{2}\exp\left\{ -\nu\varepsilon_0\lambda(t_2-t_1)  + \frac{(\mathrm{M}_d)^2}{2\nu\varepsilon_0} \int_{t_1}^{t_2}\|\Arm(\v_1(\xi))\|^2_{4}\d \xi \right\},
	\end{align}
for any $t_2> t_1 \geq0$, where $\mathrm{M}_d>0$ is a constant appearing in \eqref{Sobolev+Korn}.
\end{lemma}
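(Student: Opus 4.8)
The plan is to study the difference $\w := \v_1 - \v_2$ and derive a scalar differential inequality for $\|\w\|_2^2$, from which \eqref{Unique} follows by Gr\"onwall's lemma. First I would subtract the two copies of $\eqref{TGF}_1$ satisfied by $\v_1$ and $\v_2$; since both solutions are driven by the same forcing $\mathcal{P}\f$, the forcing cancels and $\w$ solves
\begin{align*}
	\frac{\d\w(t)}{\d t} + \mathscr{G}(\v_1(t)) - \mathscr{G}(\v_2(t)) = 0 \quad \text{in } \X',
\end{align*}
with $\mathscr{G}$ as in \eqref{opr-G}. Pairing this identity with $\w$ and using the energy equality $\langle \frac{\d\w}{\d t}, \w\rangle = \frac12\frac{\d}{\d t}\|\w\|_2^2$ gives
\begin{align*}
	\frac12 \frac{\d}{\d t}\|\w(t)\|_2^2 = - \left\langle \mathscr{G}(\v_1(t)) - \mathscr{G}(\v_2(t)), \w(t)\right\rangle,
\end{align*}
for a.e.\ $t$.

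Next I would invoke the local monotonicity estimate \eqref{CL1}. Applying Lemma \ref{lem-Loc-Monotinicity} with the roles of $\v_1$ and $\v_2$ interchanged --- which is legitimate because the pairing $\langle \mathscr{G}(\v_1)-\mathscr{G}(\v_2), \v_1-\v_2\rangle$ and every norm of $\Arm(\v_1-\v_2)$ are invariant under this swap --- yields \eqref{CL1} with $\|\Arm(\v_1)\|_4^2$ in place of $\|\Arm(\v_2)\|_4^2$, which is exactly the factor appearing on the right-hand side of \eqref{Unique}. Discarding the nonnegative quartic term $\frac{\beta\varepsilon_0}{4}\|\Arm(\w)\|_4^4$, this produces
\begin{align*}
	\frac12 \frac{\d}{\d t}\|\w\|_2^2 \leq - \frac{\nu\varepsilon_0}{4}\|\Arm(\w)\|_2^2 + \frac{(\mathrm{M}_d)^2}{4\nu\varepsilon_0}\|\Arm(\v_1)\|_4^2\|\w\|_2^2.
\end{align*}
To convert $\|\Arm(\w)\|_2^2$ into $\|\w\|_2^2$ I would use that $\w$ is divergence free: an integration by parts gives $\|\Arm(\w)\|_2^2 = 2\|\nabla\w\|_2^2$ (the cross term $\int \nabla\w:(\nabla\w)^T\,\d x$ vanishes after integrating by parts against $\diver\w=0$), so that the Poincar\'e inequality \eqref{Poin} gives $\|\Arm(\w)\|_2^2 \geq 2\lambda\|\w\|_2^2$. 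Substituting and multiplying through by $2$ yields
\begin{align*}
	\frac{\d}{\d t}\|\w(t)\|_2^2 \leq \left( -\nu\varepsilon_0\lambda + \frac{(\mathrm{M}_d)^2}{2\nu\varepsilon_0}\|\Arm(\v_1(t))\|_4^2\right)\|\w(t)\|_2^2.
\end{align*}

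Finally, the integral form of Gr\"onwall's lemma on $[t_1,t_2]$ gives exactly \eqref{Unique}. The only genuinely delicate point is the justification of the energy equality in the first step: the difference $\w$ only satisfies $\w \in \mathrm{L}^2_{\mathrm{loc}}(0,\infty;\V)\cap \mathrm{L}^4_{\mathrm{loc}}(0,\infty;\mathbb{W}^{1,4}_0(\mathfrak{D}))$ with $\frac{\d\w}{\d t} \in \mathrm{L}^2_{\mathrm{loc}}(0,\infty;\V') + \mathrm{L}^{\frac43}_{\mathrm{loc}}(0,\infty;\mathbb{W}^{-1,\frac43}(\mathfrak{D}))$, so $\w$ is not \apriori an admissible test function in the pointwise sense. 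I would resolve this by working within the Gelfand triple $\mathbb{W}^{1,4}_0(\mathfrak{D}) \hookrightarrow \H \hookrightarrow \mathbb{W}^{-1,\frac43}(\mathfrak{D})$, in which $\w$ and $\frac{\d\w}{\d t}$ are dual Bochner-integrable; the standard Lions--Magenes chain-rule lemma then guarantees $\w \in \mathrm{C}([t_1,t_2];\H)$ and the absolute continuity of $t\mapsto \|\w(t)\|_2^2$ with derivative $2\langle \frac{\d\w}{\d t},\w\rangle$, which legitimizes every manipulation above. Everything else is a routine consequence of the already-established Lemma \ref{lem-Loc-Monotinicity}.
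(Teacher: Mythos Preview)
Your proof is correct and follows essentially the same route as the paper: subtract the equations, pair with the difference, invoke the local monotonicity estimate \eqref{CL1} (with the roles of $\v_1,\v_2$ swapped so that $\|\Arm(\v_1)\|_4^2$ appears, exactly as you note), convert $\|\Arm(\w)\|_2^2$ to $\|\w\|_2^2$ via Poincar\'e, and apply Gr\"onwall. Your explicit justification of the energy equality via the Lions--Magenes lemma and your remark on the symmetry of \eqref{CL1} under the swap $\v_1\leftrightarrow\v_2$ are more careful than the paper's own proof, which simply writes ``taking the inner product'' and silently passes from $\v_2$ to $\v_1$ between displays.
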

\begin{proof}
	Since $\v_1(\cdot)$ and $\v_2(\cdot)$ are the solutions of \eqref{TGF}, therefore $\mathfrak{Y}(\cdot):=\v_1(\cdot)-\v_2(\cdot)$ satisfies the following:
	\begin{align}\label{uni1}
		\frac{\d \mathfrak{Y}(t)}{\d t} &= -\left[\mathscr{G} (\v_1(t))-\mathscr{G} (\v_2(t))\right],
	\end{align}
	for a.e. $t\in[0,T]$, where $\mathscr{G}(\cdot)$ is given by \eqref{opr-G}. 	Taking the inner product of \eqref{uni1} with $\mathfrak{Y}(\cdot)$ and using \eqref{CL1}, we get
	\begin{align}\label{uni2}
		&	\frac{1}{2}\frac{\d}{\d t} \|\mathfrak{Y}(t)\|^2_{2}  +  \frac{\nu\varepsilon_0}{4} \|\Arm(\mathfrak{Y}(t))\|^2_{2} +  \frac{\beta\varepsilon_0}{4}\|\Arm(\mathfrak{Y}(t))\|^4_{4}   \leq   \frac{(\mathrm{M}_d)^2}{4\nu\varepsilon_0} \|\Arm(\v_2(t))\|_{{4}}^2 \|\mathfrak{Y}(t)\|_{2}^2  ,
	\end{align}
for a.e. $t\in[0,T]$. 	Using \eqref{Poin} in \eqref{uni2}, we write
	\begin{align*}
		\frac{\d}{\d t}\|\mathfrak{Y}(t)\|^2_{2} + \left(\nu\varepsilon_0\lambda - \frac{(\mathrm{M}_d)^2}{2\nu\varepsilon_0} \|\Arm(\v_1(t))\|^2_{4}\right)\|\mathfrak{Y}(t)\|^2_{2}\leq 0.
	\end{align*}
Hence,	an application of Gronwall's inequality concludes the proof.
\end{proof}

Now we are ready to establish the main result of this section, that is, the global attractor has only one element.

\begin{theorem}\label{D-SA}
	Assume that $\f\in \H^{-1}(\mathfrak{D})$, the condition \eqref{third-grade-paremeters-res} is satisfied and the Hypotheses \ref{assumpO} and \ref{hypo-small-force} hold.  Let $\mathfrak{D}$ denote either a bounded or an unbounded domain, and suppose that $\f \in \H^{-1}(\mathfrak{D})$ in the former case or $\f \in \L^{2}(\mathfrak{D})$ in the latter.  Then, the global attractor $\mathscr{A}$ of system \eqref{TGF} obtained in Theorem \ref{thm-global-attractor} is a singleton set $\mathscr{A}=\{\textbf{a}_{*}\} \subset \H$.
\end{theorem}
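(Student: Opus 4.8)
The plan is to exploit the invariance of the global attractor $\mathscr{A}$ (obtained in Theorem \ref{thm-global-attractor}) together with the contraction estimate of Lemma \ref{Uniqueness} and the integrated dissipation bound \eqref{absorb2} of Lemma \ref{lem-Absorb}. Fix two arbitrary points $\mathbf{a}_1,\mathbf{a}_2\in\mathscr{A}$. Since $\S(t)\mathscr{A}=\mathscr{A}$ for all $t\geq 0$, through each point there passes a complete orbit $\v_i\colon\R\to\mathscr{A}$ with $\v_i(0)=\mathbf{a}_i$, $i=1,2$, and both orbits remain in the bounded set $\mathscr{A}$ for all time. The objective is to prove $\|\mathbf{a}_1-\mathbf{a}_2\|_2=0$.

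First I would apply Lemma \ref{Uniqueness} on the interval $[-T,0]$ (that is, $t_1=-T$, $t_2=0$ with $T>1$) to the two complete orbits, obtaining
\[
	\|\mathbf{a}_1-\mathbf{a}_2\|^2_{2} \leq \|\v_1(-T)-\v_2(-T)\|^2_{2}\exp\left\{-\nu\varepsilon_0\lambda\,T + \frac{(\mathrm{M}_d)^2}{2\nu\varepsilon_0}\int_{-T}^{0}\|\Arm(\v_1(\xi))\|^2_{4}\d\xi\right\}.
\]
Because $\mathscr{A}$ is bounded in $\H$, the prefactor is controlled uniformly in $T$ by $\|\v_1(-T)-\v_2(-T)\|^2_{2}\leq(\operatorname{diam}_{\H}\mathscr{A})^2=:D^2<\infty$. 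To handle the integral I would invoke \eqref{absorb2}, and here invariance is the key: for the given $T$, choose $T'>T+\mathcal{T}_{\nu^*\lambda}$ and regard $\v_1|_{[-T',\infty)}$ as the weak solution issued at time $-T'$ from $\v_1(-T')\in\mathscr{A}\subset B_{\H}$; then \eqref{absorb2} is valid for $s\geq -T'+\mathcal{T}_{\nu^*\lambda}$, and taking $s=-T$, $t=0$ gives
\[
	\int_{-T}^{0}\|\Arm(\v_1(\xi))\|^2_{4}\d\xi \leq \left(\frac{1}{\beta^*\nu^{*}}\right)^{\frac12}\left(\frac{1}{2}+\frac{1}{\nu^*\lambda}\right)^{\frac12}\|\f\|_{\H^{-1}}\,T.
\]
Substituting this into the exponent, the coefficient of $T$ collapses to precisely $-\varrho$, with $\varrho$ as in Hypothesis \ref{hypo-small-force}, yielding $\|\mathbf{a}_1-\mathbf{a}_2\|^2_{2}\leq D^2 e^{-\varrho T}$.

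Finally, since $\varrho>0$ by the smallness condition \eqref{C_2}, letting $T\to\infty$ forces $\|\mathbf{a}_1-\mathbf{a}_2\|_{2}=0$, hence $\mathbf{a}_1=\mathbf{a}_2$. As $\mathbf{a}_1,\mathbf{a}_2$ were arbitrary, $\mathscr{A}$ reduces to a single element $\{\mathbf{a}_*\}$. The step I expect to be the main obstacle is the rigorous application of \eqref{absorb2} along \emph{backward} orbits: one must argue, through invariance and uniqueness of weak solutions, that the orbit lying on $\mathscr{A}$ coincides with a trajectory launched arbitrarily far in the past from data in the fixed bounded set $B_{\H}$, so that the absorption time $\mathcal{T}_{\nu^*\lambda}$ poses no obstruction to sending $T\to\infty$. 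Once this point is secured, the remainder is a direct substitution into the two displayed estimates.
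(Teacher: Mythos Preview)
Your proposal is correct and follows essentially the same approach as the paper: pick two points of $\mathscr{A}$, pass complete bounded orbits through them, apply the contraction estimate of Lemma~\ref{Uniqueness} over an interval of length $T$, control the integral in the exponent via \eqref{absorb2}, and let $T\to\infty$ using $\varrho>0$. The only cosmetic differences are that the paper shifts time so that the trajectories are viewed as $\v_i(t,\boldsymbol{\phi}_i(-t))$ started at time $0$ (rather than working directly on $[-T,0]$), and it bounds the prefactor via \eqref{absorb1} to get the explicit constant $\tfrac{4\|\f\|_{\H^{-1}}^2}{(\nu^*)^2\lambda}$ instead of your $(\operatorname{diam}_{\H}\mathscr{A})^2$; the ``main obstacle'' you flag is handled in the paper exactly as you anticipate, by identifying the complete orbit with the forward solution launched from $\boldsymbol{\phi}_i(-t)\in\mathscr{A}$.
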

\begin{proof}
	Let us take two arbitrary points $\textbf{a}_1, \textbf{a}_2\in \mathscr{A}$. Since the global attractor $\mathscr{A}$ is a bounded set in $\H$ and consists of bounded complete trajectories in $\H$, there exists two complete trajectories $\boldsymbol{\phi}_1(t)$ and $\boldsymbol{\phi}_2(t)$ with $\boldsymbol{\phi}_1(0)=\textbf{a}_1$ and $\boldsymbol{\phi}_2(0)=\textbf{a}_2$, respectively. Hence, in view of Lemmas \ref{lem-Absorb} and \ref{Uniqueness}, and small forcing intensity condition \eqref{C_2}, we observe that
	\begin{align*}
	&	\|\textbf{a}_1-\textbf{a}_2\|^2_{2} 
	\nonumber\\ &= \|\boldsymbol{\phi}_1(0)-\boldsymbol{\phi}_2(0)\|^2_{2}\\
		&=\|\v_1(t,\boldsymbol{\phi}_1(-t))-\v_2(t,\boldsymbol{\phi}_2(-t))\|^2_{2}  \\
		&\leq \|\v_1(\mathcal{T}_{\nu^*\lambda},\boldsymbol{\phi}_1(-t)) - \v_2(\mathcal{T}_{\nu^*\lambda},\boldsymbol{\phi}_2(-t))\|^2_{2} \nonumber\\ & \quad \times \exp\left\{ -\nu\varepsilon_0\lambda(t-\mathcal{T}_{\nu^*\lambda})  + \frac{(\mathrm{M}_d)^2}{2\nu\varepsilon_0} \int_{\mathcal{T}_{\nu^*\lambda}}^{t} \|\Arm(\v_1(\xi))\|^2_{4}\d \xi \right\} ,  && \text{ for all }\  t> \mathcal{T}_{\nu^*\lambda}+1,\\
		&\leq 2 \left( \|\v_1(\mathcal{T}_{\nu^*\lambda},\boldsymbol{\phi}_1(-t))\|_2^2 + \|\v_2(\mathcal{T}_{\nu^*\lambda},\boldsymbol{\phi}_2(-t))\|^2_{2}\right)  
		\nonumber\\ & \quad \times \exp\left\{ -\nu\varepsilon_0\lambda(t-\mathcal{T}_{\nu^*\lambda})  + \frac{(\mathrm{M}_d)^2}{2\nu\varepsilon_0} \left( \frac{1}{\beta^*\nu^{*}}\right)^{\frac12} \left( \frac{1}{2} + \frac{1}{\nu^*\lambda} \right)^{\frac12} \|\f\|_{\H^{-1}} (t-\mathcal{T}_{\nu^*\lambda}) \right\} , && \text{ for all }\  t\geq  \mathcal{T}_{\nu^*\lambda}+1,\\
		& \leq  \frac{4\|\f\|_{\H^{-1}}^2}{(\nu^*)^2\lambda}  \exp\left\{ -\varrho  (t-\mathcal{T}_{\nu^*\lambda}) \right\} , && \text{ for all }\  t\geq  \mathcal{T}_{\nu^*\lambda}+1,\\
		& \to 0 \ \text{ as } \ t\to \infty,
	\end{align*}
where we have used \eqref{C_2} to obtain the final convergence, which concludes the proof.
\end{proof}

	\section{Stochastic counterpart of underlying system with additive noise} \label{sec4}\setcounter{equation}{0}

	Let us consider the stochastic version of system \eqref{TGF} with infinite-dimensional additive white noise which reads as follows:
	\begin{equation}\label{STGF}
		\left\{
		\begin{aligned}
			\d\v_{\varsigma}(t)+\{\nu\A\v_{\varsigma}(t)+\B(\v_{\varsigma}(t))+\alpha\J(\v_{\varsigma}(t))+\beta\K(\v_{\varsigma}(t))\}\d t&=\mathcal{P}\f \d t + \varsigma \d\mathrm{W}(t), \ \ \ t> 0, \\ 
			\v(0)&=\boldsymbol{\psi},
		\end{aligned}
		\right.
	\end{equation}
	where $\boldsymbol{\psi}\in \H,\ \f\in \H^{-1}(\mathfrak{D})$ and $\{\mathrm{W}(t)\}_{t\in \R}$ is a two-sided cylindrical Wiener process in $\H$ with its RKHS $\mathrm{K}$ which satisfies the following hypothesis:
	\begin{hypothesis}\label{assump1}
		$ \mathrm{K} \subset \H \cap {\mathbb{W}}^{1,4}(\mathfrak{D}) $ is a Hilbert space such that for some $\delta\in (0, 1/2),$
		\begin{align}\label{A1}
			\A^{-\delta} : \mathrm{K} \to \H \cap {\mathbb{W}}^{1,4}(\mathfrak{D})  \  \text{ is }\ \gamma \text{-radonifying.}
		\end{align}
	\end{hypothesis}

	We now introduce the notion of a weak solution (in the analytic sense) to system \eqref{STGF}, corresponding to the initial condition $\x\in\H$ prescribed at the initial time $s\in \R.$
	
	\begin{definition}\label{Def_u}
		Suppose that the condition \eqref{third-grade-paremeters-res} holds, and Hypotheses \ref{assump1} and \ref{assumpO} are satisfied. If $\x\in \H$, $s\in \R$, $\f\in \H^{-1}(\mathfrak{D})$ and $\{\W(t)\}_{t\in \R}$ is a two-sided Wiener process with its RKHS $\mathrm{K}$. A process $\{\v(t), \ t\geq s\},$ with trajectories in $\mathrm{C}([s, \infty); \H) \cap \mathrm{L}^{4}_{\mathrm{loc}}([s, \infty); \mathbb{W}^{1,4}(\mathfrak{D}))$ is a solution to system \eqref{STGF} if and only if  $\v_{\varsigma}(s) = \x$ and for any $\boldsymbol{\phi}\in \X$, $t>s,$ $\mathbb{P}$-a.s.,
		\begin{align*}
			&	(\v_{\varsigma}(t), \boldsymbol{\phi}) - (\v_{\varsigma}(s), \boldsymbol{\phi}) 
			 =  - \int_{s}^{t} \langle \nu\A\v_{\varsigma}(\xi)+\B(\v_{\varsigma}(\xi))+\alpha\J(\v_{\varsigma}(\xi))+\beta\K(\v_{\varsigma}(\xi))  - \f , \boldsymbol{\phi} \rangle \d \xi  + \varsigma \int_{s}^{t} ( \boldsymbol{\phi}, \d\W(\xi)).
		\end{align*}
	\end{definition}
	
	The well-posedness of   system \eqref{STGF} is discussed in Theorem \ref{STGF-Sol} below.


	\subsection{Metric dynamical system}Let us denote $\mathfrak{X} = \H \cap  {\mathbb{W}}^{1,4}(\mathfrak{D}) $. Let $\mathrm{E}$ denote the completion of $\A^{-\delta}(\mathfrak{X})$ with respect to the graph norm $\|x\|_{\mathrm{E}}=\|\A^{-\delta} x \|_{\mathfrak{X}}, \text{ for } x\in \mathfrak{X}$,  where $ \|\cdot\|_{\mathfrak{X}} = \|\cdot\|_{2} +  \|\cdot\|_{\mathbb{W}^{1,4}}$. Note that $\mathrm{E}$ is a separable Banach spaces (cf. \cite{Brze2}).
	
	For $\xi \in(0, 1/2)$, let us set 
	$$ \|\omega\|_{C^{\xi}_{1/2} (\mathbb{R};\mathrm{E})} = \sup_{t\neq s \in \mathbb{R}} \frac{\|\omega(t) - \omega(s)\|_{\mathrm{E}}}{|t-s|^{\xi}(1+|t|+|s|)^{1/2}}.$$
	Furthermore, we define
	\begin{align*}
		C^{\xi}_{1/2} (\mathbb{R}; \mathrm{E}) &= \left\{ \omega \in C(\mathbb{R}; \mathrm{E}) : \omega(0)=\boldsymbol{0},\ \|\omega\|_{C^{\xi}_{1/2} (\mathbb{R}; \mathrm{E})} < \infty \right\},\\ \Omega(\xi, \mathrm{E})&= \overline{\{ \omega \in C^\infty_0 (\mathbb{R}; \mathrm{E}) : \omega(0) = 0 \}}^{C^{\xi}_{1/2} (\mathbb{R}; \mathrm{E})}.
	\end{align*}
	The space $\Omega(\xi, \mathrm{E})$ is a separable Banach space. We also define
	$$C_{1/2} (\mathbb{R}; \mathrm{E}) = \left\{ \omega \in C(\mathbb{R}; \mathrm{E}) : \omega(0)=0, \|\omega\|_{C_{1/2} (\mathbb{R}; \mathrm{E})} = \sup_{t \in \mathbb{R}} \frac{\|\omega(t) \|_{\mathrm{E}}}{1+|t|^{\frac{1}{2}}} < \infty \right\}.$$

	Let us denote by $\mathcal{F}$, the Borel $\sigma$-algebra on $\Omega(\xi, \mathrm{E}).$ For $\xi\in (0, 1/2)$, there exists a Borel probability measure $\mathbb{P}$ on $\Omega(\xi, \mathrm{E})$ (cf. \cite{Brze}) such that the canonical process $\{w_t, \ t\in \mathbb{R}\}$ defined by 
	\begin{align}\label{Wp}
		w_t(\omega) := \omega(t), \ \ \ \omega \in \Omega(\xi, \mathrm{E}),
	\end{align}
	is an $\mathrm{E}$-valued two-sided Wiener process such that the RKHS of the Gaussian measure $\mathscr{L}(w_1)$ on $\mathrm{E}$ is $\mathrm{K}$. For $t\in \mathbb{R},$ let $\mathcal{F}_t := \sigma \{ w_s : s \leq t \}.$ Then  there exists a unique bounded linear map $\mathrm{W}(t): \mathrm{K} \to \mathrm{L}^2(\Omega(\xi, \mathrm{E}), \mathcal{F}_t  ,  \mathbb{P}).$ Moreover, the family $\{\mathrm{W}(t)\}_{t\in \mathbb{R}}$ is a $\mathrm{K}$-cylindrical Wiener process on a filtered probability space $(\Omega(\xi, \mathrm{E}), \mathcal{F}, \{\mathcal{F}_t\}_{t \in \mathbb{R}} , \mathbb{P})$ (cf. \cite{BP} for more details).
	
	We consider a flow $\vartheta = (\vartheta_t)_{t\in \mathbb{R}}$ on the space $C_{1/2} (\mathbb{R}; \mathrm{E}),$  defined by
	$$ \vartheta_t \omega(\cdot) = \omega(\cdot + t) - \omega(t), \ \ \ \omega\in C_{1/2} (\mathbb{R};\mathrm{E}), \ \ t\in \mathbb{R}.$$ 
	This flow keeps the spaces $C^{\xi}_{1/2} (\mathbb{R};\mathrm{E})$ and $\Omega(\xi, \mathrm{E})$ invariant and preserves $\mathbb{P}.$ 
	
	Summing up, we have the following result:
	\begin{proposition}[{\cite[Proposition 6.13]{Brzezniak+Li_2006}}]\label{m-DS1}
		The quadruple $(\Omega(\xi, \mathrm{E}), \mathcal{F}, \mathbb{P}, {\vartheta})$ is a metric dynamical system. 
	\end{proposition}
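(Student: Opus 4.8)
The plan is to verify the three defining properties of a metric dynamical system (in the sense of Arnold) for the pair consisting of the probability space $(\Omega(\xi, \mathrm{E}), \mathcal{F}, \mathbb{P})$ and the shift flow $\vartheta$: namely (i) the group/cocycle property $\vartheta_0 = \mathrm{id}$ and $\vartheta_{t+s} = \vartheta_t \circ \vartheta_s$ together with $\vartheta_t$-invariance of $\Omega(\xi, \mathrm{E})$; (ii) joint measurability of $(t,\omega) \mapsto \vartheta_t\omega$; and (iii) invariance of the measure, $(\vartheta_t)_{*}\mathbb{P} = \mathbb{P}$ for every $t \in \mathbb{R}$. The probability space itself is already in place from the construction of $\mathbb{P}$ via \eqref{Wp}.

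First I would dispatch the algebraic properties. From $\vartheta_t\omega(\cdot) = \omega(\cdot+t) - \omega(t)$ and $\omega(0)=0$ one gets $\vartheta_0\omega = \omega$, and inserting $(\vartheta_s\omega)(r) = \omega(r+s) - \omega(s)$ into $\vartheta_t$ gives $\vartheta_t(\vartheta_s\omega)(\cdot) = \omega(\cdot+t+s) - \omega(t+s) = \vartheta_{t+s}\omega(\cdot)$, so the cocycle property holds. For invariance of the space, note $(\vartheta_t\omega)(0)=0$ and $(\vartheta_t\omega)(r) - (\vartheta_t\omega)(q) = \omega(r+t) - \omega(q+t)$; a change of variables together with the elementary bound $1 + |r| + |q| \leq (1+2|t|)\bigl(1 + |r-t| + |q-t|\bigr)$ yields
\[
\|\vartheta_t\omega\|_{C^{\xi}_{1/2}} = \sup_{r\neq q}\frac{\|\omega(r) - \omega(q)\|_{\mathrm{E}}}{|r-q|^{\xi}\bigl(1 + |r-t| + |q-t|\bigr)^{1/2}} \leq (1+2|t|)^{1/2}\,\|\omega\|_{C^{\xi}_{1/2}}.
\]
Hence each $\vartheta_t$ is a bounded linear operator on $C^{\xi}_{1/2}(\mathbb{R};\mathrm{E})$; since it preserves the decay/separability condition defining the closed subspace $\Omega(\xi,\mathrm{E})$, we obtain $\vartheta_t(\Omega(\xi,\mathrm{E})) \subseteq \Omega(\xi,\mathrm{E})$, and applying the same bound to $\vartheta_{-t}$ gives equality. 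For measurability I would use that $\omega \mapsto \vartheta_t\omega$ is continuous for fixed $t$ (it is the bounded operator just estimated) while $t \mapsto \vartheta_t\omega$ is continuous for fixed $\omega$ (continuity of translations in the weighted norm); separate continuity on a separable metric space then gives joint $\mathcal{B}(\mathbb{R})\otimes\mathcal{F}/\mathcal{F}$-measurability through a standard Carath\'eodory approximation.

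The crux is the measure-preservation step. I would show that the shifted canonical process $\tilde{w}_s := w_{s+t} - w_t = (\vartheta_t w)_s$ is again an $\mathrm{E}$-valued two-sided Wiener process whose increment Gaussian measure has the same RKHS $\mathrm{K}$. This reduces to the stationarity and independence of the increments of $w$: for $s_1 < \cdots < s_n$ the vector of increments of $\tilde{w}$ is the corresponding vector for $w$ shifted by $t$, which has the same joint centered Gaussian law. Since a Gaussian (Wiener) measure on $\Omega(\xi,\mathrm{E})$ is determined by its covariance, uniqueness of the Wiener measure with RKHS $\mathrm{K}$ forces $\mathscr{L}(\vartheta_t w) = \mathscr{L}(w) = \mathbb{P}$, i.e. $(\vartheta_t)_{*}\mathbb{P} = \mathbb{P}$.

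The hard part will be precisely this measure-preservation argument carried out in the sharp weighted H\"older space: one must know \emph{a priori} that the shifted Wiener process still has paths in $\Omega(\xi,\mathrm{E})$ $\mathbb{P}$-almost surely, so that $\vartheta_t$ is genuinely $\Omega(\xi,\mathrm{E})$-valued, and that the law is uniquely pinned down by $\mathrm{K}$ on this space. The invariance estimate displayed above (which guarantees that $\vartheta_t$ maps $\mathbb{P}$-full sets of admissible paths into admissible paths) and the construction of $\mathbb{P}$ as the distribution of the $\mathrm{E}$-valued Wiener process are exactly the ingredients that make this rigorous.
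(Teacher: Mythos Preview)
The paper does not prove this proposition; it is quoted verbatim from \cite[Proposition 6.13]{Brzezniak+Li_2006} and used as a black box. Your sketch is the standard verification and is essentially correct. One small point deserves more care: you assert that $\vartheta_t$ ``preserves the decay/separability condition defining the closed subspace $\Omega(\xi,\mathrm{E})$'', but note that if $\omega\in C^\infty_0(\mathbb{R};\mathrm{E})$ with $\omega(0)=0$ and $\omega(t)\neq 0$, then $\vartheta_t\omega(\cdot)=\omega(\cdot+t)-\omega(t)$ is smooth and vanishes at $0$ but is \emph{not} compactly supported (it equals the nonzero constant $-\omega(t)$ outside a compact set). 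So $\vartheta_t$ does not map the dense core $\{\omega\in C^\infty_0:\omega(0)=0\}$ into itself, and the invariance of $\Omega(\xi,\mathrm{E})$ cannot be concluded simply from boundedness plus ``$\vartheta_t(\text{core})\subset\text{core}$''. The fix is easy: such an eventually-constant smooth function can itself be approximated in $C^{\xi}_{1/2}$ by compactly supported smooth functions (truncate with a slowly varying cutoff; the extra Hölder contribution is controlled by the weight $(1+|r|+|q|)^{-1/2}$), so $\vartheta_t(\text{core})\subset\Omega(\xi,\mathrm{E})$ and then boundedness of $\vartheta_t$ gives $\vartheta_t(\Omega(\xi,\mathrm{E}))\subset\Omega(\xi,\mathrm{E})$. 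Likewise, continuity of $t\mapsto\vartheta_t\omega$ in the $C^{\xi}_{1/2}$ norm genuinely uses that $\omega\in\Omega(\xi,\mathrm{E})$ (the ``little Hölder'' phenomenon): it is clear for smooth compactly supported $\omega$, and your locally uniform operator bound $(1+2|t|)^{1/2}$ lets a $3\varepsilon$ argument extend it to the closure. With these two clarifications your argument is complete; the remaining steps (cocycle identity, separate continuity $\Rightarrow$ joint measurability on a separable space, and $(\vartheta_t)_*\mathbb{P}=\mathbb{P}$ via stationarity of Wiener increments and uniqueness of the Gaussian law with RKHS $\mathrm{K}$) are exactly as you wrote.
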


	\subsection{Ornstein-Uhlenbeck process}\label{O-Up}
	Let us first recall some analytic preliminaries from \cite{Brzezniak+Li_2006} which will help us to define an Ornstein-Uhlenbeck process. All the results of this subsection are valid for the space $\mathrm{C}^{\xi}_{1/2} (\mathbb{R}; \mathbb{Y})$ replaced by $\Omega(\xi, \mathbb{Y}).$ 
	\begin{proposition}[{\cite[Proposition 2.11]{Brzezniak+Li_2006}}]\label{Ap}
		Let  $-\mathbb{A}$ be the generator of an analytic semigroup $\{e^{t\mathbb{A}}\}_{t\geq 0}$ on a separable Banach space $\mathbb{Y}$ such that for some $C>0\ \text{and}\ \gamma>0$
		\begin{align}\label{ASG}
			\| \mathbb{A}^{1+\delta}e^{-t\mathbb{A}}\|_{\mathfrak{L}(\mathbb{Y})} \leq C_{\delta} t^{-1-\delta} e^{-\gamma t}, \ \ t> 0,
		\end{align}
		where $\mathfrak{L}(\mathbb{Y})$ denotes the space of all bounded linear operators from $\mathbb{Y}$ to $\mathbb{Y}$.	For $\xi \in (\delta, 1/2)$ and $\widetilde{\omega} \in  \mathrm{C}^{\xi}_{1/2} (\mathbb{R};\mathbb{Y}),$  define 
		\begin{align}
			\hat{\z}(t) := \hat{\z} (\mathbb{A} ; \widetilde{\omega}, t) := \int_{-\infty}^{t} \mathbb{A}^{1+\delta} e^{-(t-r)\mathbb{A}} (\widetilde{\omega}(t) - \widetilde{\omega}(r))\d r, \ \ t\in \mathbb{R}.
		\end{align}
		If $t\in \mathbb{R},$ then $\hat{\z}(t)$ is a well-defined element of $\mathbb{Y}$ and the mapping 
		$$\mathrm{C}^{\xi}_{1/2} (\mathbb{R};\mathbb{Y}) \ni \widetilde{\omega}  \mapsto \hat{\z}(t) \in \mathbb{Y} $$
		is continuous. Moreover, the map $\hat{\z} :  \mathrm{C}^{\xi}_{1/2} (\mathbb{R}; \mathbb{Y}) \to  \mathrm{C}_{1/2} (\mathbb{R}; \mathbb{Y})$  is well defined, linear and bounded. In particular, there exists a constant $C >0$ such that for any $\widetilde{\omega} \in \mathrm{C}^{\xi}_{1/2} (\mathbb{R};\mathbb{Y})$ 
		\begin{align}\label{X_bound_of_z}
			\|\hat{\z}(\widetilde{\omega},t)\|_{\mathbb{Y}} \leq C(1 + |t|^{1/2})\|\widetilde{\omega}\|_{\mathrm{C}^{\xi}_{1/2} (\mathbb{R}; \mathbb{Y})}, \ \ \ t \in \R.
		\end{align}
		Furthermore, under the same assumption, the following results hold (Corollaries 6.4, 6.6 and 6.8 in \cite{Brzezniak+Li_2006}):
		\begin{itemize}
			\item [1.]For all $-\infty<a<b<\infty$ and $t\in \R$, the map 
			\begin{align}\label{O-U_conti}
				\mathrm{C}^{\xi}_{1/2} (\mathbb{R};\mathbb{Y}) \ni \widetilde{\omega} \mapsto (\hat{\z}(\widetilde{\omega}, t), \hat{\z}(\widetilde{\omega},\cdot)) \in \mathbb{Y} \times \mathrm{L}^{q} (a, b; \mathbb{Y}),
			\end{align}
			where $q\in [1, \infty]$, is continuous.
			\item [2.] For any $\omega \in \mathrm{C}^{\xi}_{1/2} (\mathbb{R};\mathbb{Y}),$
			\begin{align}\label{stationary}
				\hat{\z}(\vartheta_s \omega, t) = \hat{\z}(\omega, t+s), \ \ t, s \in \mathbb{R}.
			\end{align}
			\item [3.] For $\zeta \in \mathrm{C}_{1/2}(\mathbb{R};\mathbb{Y}),$ if we put $\uptau_s(\zeta(t))=\zeta(t+s), \ t,s \in \R,$ then, for $t \in \R ,\  \uptau_s \circ \hat{\z} = \hat{\z}\circ\vartheta_s$, that is, 
			\begin{align}\label{IS}
				\uptau_s\big(\hat{\z}(\omega, \cdot)\big)= \hat{\z}\big(\vartheta_s(\omega), \cdot\big), \ \ \ \omega\in \mathrm{C}^{\xi}_{1/2} (\mathbb{R};\mathbb{Y}).
			\end{align}
		\end{itemize} 
	\end{proposition}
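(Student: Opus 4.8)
The plan is to treat this as a purely deterministic, pathwise estimate for the convolution integral defining $\hat{\z}$, using only the decay-and-singularity structure of the semigroup encoded in \eqref{ASG} together with the weighted Hölder growth built into the norm of $C^{\xi}_{1/2}(\mathbb{R};\mathbb{Y})$. No stochastic input is needed at this stage: the probabilistic interpretation (that $\hat{\z}$ realizes a stationary Ornstein--Uhlenbeck process) is obtained afterwards, once the pathwise map has been shown to be well defined, bounded, and to satisfy the intertwining identities.

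First I would establish the quantitative bound \eqref{X_bound_of_z}, since well-definedness, linearity and boundedness all follow from it. Inserting \eqref{ASG} and the defining inequality $\|\widetilde{\omega}(t)-\widetilde{\omega}(r)\|_{\mathbb{Y}}\leq|t-r|^{\xi}(1+|t|+|r|)^{1/2}\|\widetilde{\omega}\|_{C^{\xi}_{1/2}(\mathbb{R};\mathbb{Y})}$ into the integrand and substituting $s=t-r$ yields
\begin{align*}
\|\hat{\z}(\widetilde{\omega},t)\|_{\mathbb{Y}} \leq C_{\delta}\,\|\widetilde{\omega}\|_{C^{\xi}_{1/2}(\mathbb{R};\mathbb{Y})}\int_{0}^{\infty} s^{\xi-\delta-1}\,e^{-\gamma s}\,(1+|t|+|t-s|)^{1/2}\,\d s.
\end{align*}
Using $1+|t|+|t-s|\leq 1+2|t|+s$ together with $(a+b)^{1/2}\leq a^{1/2}+b^{1/2}$ splits the weight, so the integral is dominated by $(1+2|t|)^{1/2}\Gamma_{1}+\Gamma_{2}$, where $\Gamma_{1}=\int_{0}^{\infty}s^{\xi-\delta-1}e^{-\gamma s}\d s$ and $\Gamma_{2}=\int_{0}^{\infty}s^{\xi-\delta-1/2}e^{-\gamma s}\d s$ are finite Gamma-type integrals. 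The hypothesis $\xi>\delta$ is exactly what renders the power $s^{\xi-\delta-1}$ integrable at the origin, while $e^{-\gamma s}$ controls the tail; since $(1+2|t|)^{1/2}\leq\sqrt{2}(1+|t|^{1/2})$, this gives \eqref{X_bound_of_z} with $C$ depending only on $\delta,\xi,\gamma$. Linearity in $\widetilde{\omega}$ is immediate, so continuity of the fixed-$t$ map into $\mathbb{Y}$ follows from boundedness, and continuity of $t\mapsto\hat{\z}(t)$ (hence the mapping property into $C_{1/2}(\mathbb{R};\mathbb{Y})$) follows by dominated convergence against the locally uniform majorant $C_{\delta}s^{\xi-\delta-1}e^{-\gamma s}(1+2|t|+s)^{1/2}$; the same majorant, integrated over a compact interval $(a,b)$, yields the joint continuity into $\mathbb{Y}\times\mathrm{L}^{q}(a,b;\mathbb{Y})$ asserted in item~1.

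For the stationarity identity \eqref{stationary} I would compute directly. Since $\vartheta_{s}\omega(\cdot)=\omega(\cdot+s)-\omega(s)$, the additive constant cancels in the increment, $(\vartheta_{s}\omega)(t)-(\vartheta_{s}\omega)(r)=\omega(t+s)-\omega(r+s)$. Substituting $r'=r+s$ in the defining integral, which sends the limits to $(-\infty,t+s)$ and turns $t-r$ into $(t+s)-r'$, converts $\hat{\z}(\vartheta_{s}\omega,t)$ into $\hat{\z}(\omega,t+s)$; this is precisely \eqref{stationary}, and rereading the same equality through the shift $\uptau_{s}$ gives the intertwining relation \eqref{IS}.

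The main obstacle I anticipate is the careful bookkeeping of the two competing effects in the kernel: the apparently non-integrable singularity $(t-r)^{-1-\delta}$ near $r=t$, which is saved only because the Hölder gain $(t-r)^{\xi}$ leaves the integrable exponent $\xi-\delta-1>-1$; and the polynomial-in-$|t|$ growth produced by the $1/2$-weight in the path norm, which must be shown not to overwhelm the exponential factor $e^{-\gamma(t-r)}$. Once the majorant is secured, the remaining assertions---linearity and the two intertwining identities---are elementary. Since the statement is quoted verbatim from \cite{Brzezniak+Li_2006}, in practice I would simply cite that reference, the above being the self-contained argument I would reconstruct.
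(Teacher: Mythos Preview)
The paper does not prove this proposition at all: it is stated with an explicit citation to \cite[Proposition~2.11]{Brzezniak+Li_2006} (and Corollaries~6.4, 6.6, 6.8 there), and no argument is given. Your reconstruction is correct and is precisely the natural approach---dominating the integrand by $C_{\delta}s^{\xi-\delta-1}e^{-\gamma s}(1+2|t|+s)^{1/2}$ via \eqref{ASG} and the weighted H\"older norm, then reducing to two Gamma integrals whose convergence at $0$ uses $\xi>\delta$; the stationarity and intertwining identities follow from the change of variables you indicate. You already anticipated the situation in your final sentence: in this paper one simply cites the reference, and your sketch is exactly the self-contained argument one would supply if asked to unpack it.
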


	Next, we define the Ornstein-Uhlenbeck process under Hypothesis \ref{assump1}. For $\delta$ as in Hypothesis \ref{assump1}, $\nu> 0$, $ \xi \in (\delta, 1/2)$ and $ \omega \in C^{\xi}_{1/2} (\mathbb{R};\mathrm{E})$ (so that $(\nu \A + \I)^{-\delta}\omega \in C^{\xi}_{1/2} (\mathbb{R};\mathfrak{X})$), we define $$ \z (\omega, \cdot) := \hat{\z}((\nu \A + \I); (\nu \A +  \I)^{-\delta}\omega, \cdot) \ \in C_{1/2}(\mathbb{R};\mathfrak{X}),$$  that is, for any $t\geq 0,$ 
	\begin{align}\label{DOu1}
		\z (\omega, t)&=\int_{-\infty}^{t} (\nu \A +  \I)^{1+\delta} e^{-(t-\tau)(\nu \A +  \I)} ((\nu \A +  \I)^{-\delta}\vartheta_{\tau} \omega)(t-\tau)\d \tau.
	\end{align}
	For $\omega \in C^{\infty}_0 (\mathbb{R};\mathrm{E})$ with $\omega(0)= \boldsymbol{0},$ using  integration by parts, we obtain 
	\begin{align*}
		\frac{\d\z(t)}{\d t} &= -(\nu \A +  \I )\int_{-\infty}^{t} (\nu \A +  \I)^{1+\delta} e^{-(t-r)(\nu \A +  \I)} [(\nu \A +  \I)^{-\delta}\omega(t) \\&\qquad\qquad - (\nu \A +  \I)^{-\delta}\omega(r)]\d r +  \frac{\d\omega(t)}{\d t}.
	\end{align*}
	Thus $\z(\cdot)$ is the solution of the following equation:
	\begin{align}\label{OuE1}
		\frac{\d\z (t)}{\d t} + (\nu \A +  \I)\z(t) = \frac{\d\omega(t)}{\d t}, \ \ t\in \mathbb{R}.
	\end{align}

	According to the  definition \eqref{Wp} of Wiener process $\{w_t, \ t\in \R\},$ one can view the formula \eqref{DOu1} as a definition of a process $\{\z(t)\}_{ t\in \R}$ on the probability space $(\Omega(\xi, \mathrm{E}), \mathcal{F}, \mathbb{P})$. Equation \eqref{OuE1} clearly tells that the process $\z(\cdot)$ is an Ornstein-Uhlenbeck process. Furthermore, the following results hold for $\z(\cdot)$.

	\begin{proposition}[{\cite[Proposition 6.10]{Brzezniak+Li_2006}}]\label{SOUP1}
		The process $\{\z(t)\}_{ t\in \mathbb{R}},$ is a stationary Ornstein-Uhlenbeck process on $(\Omega(\xi, \mathrm{E}), \mathcal{F}, \mathbb{P})$. It is a solution of the equation 
		\begin{align}\label{OUPe1}
			\d\z(t) + (\nu \A +  \I)\z(t) \d t = \d\mathrm{W}(t), \ \ t\in \mathbb{R},
		\end{align}
		that is, for all $t\in \mathbb{R},$ $\mathbb{P}$-a.s.,
		\begin{align}\label{oup1}
			\z (t) =  \int_{-\infty}^{t} e^{-(t-\xi)(\nu \A + \I)} \d\mathrm{W}(\xi),
		\end{align}
		where the integral is an It\^o integral on the M-type 2 Banach space $\mathfrak{X}$  (cf. \cite{Brze1}).  
	\end{proposition}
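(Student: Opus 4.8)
The plan is to verify that the pathwise formula \eqref{DOu1} places us exactly in the framework of Proposition \ref{Ap}, and then to identify the resulting process with the stochastic convolution \eqref{oup1}. First I would take $\mathbb{A}=\nu\A+\I$ and $\mathbb{Y}=\mathfrak{X}$ in Proposition \ref{Ap} and check the analytic-semigroup decay estimate \eqref{ASG}. Since $\A$ is non-negative and self-adjoint in $\H$, the shifted operator $\nu\A+\I$ has spectrum bounded below by $1$, so $-(\nu\A+\I)$ generates an exponentially stable analytic semigroup; analyticity yields the singular bound $\|(\nu\A+\I)^{1+\delta}e^{-t(\nu\A+\I)}\|_{\mathfrak{L}(\mathfrak{X})}\lesssim t^{-1-\delta}$ near $t=0$, while the spectral shift produces the decay factor $e^{-\gamma t}$ for large $t$, giving \eqref{ASG} with some $\gamma\in(0,1]$. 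One must confirm this bound holds on $\mathfrak{X}=\H\cap\mathbb{W}^{1,4}(\mathfrak{D})$ and not merely on $\H$, which relies on the $\mathrm{L}^p$-analyticity of the associated Stokes-type semigroup and the boundedness of $\mathcal{P}$ from the Helmholtz-projection subsection. With \eqref{ASG} in hand, Proposition \ref{Ap} guarantees that $\z(\omega,\cdot)$ is a well-defined element of $C_{1/2}(\mathbb{R};\mathfrak{X})$ satisfying the growth bound \eqref{X_bound_of_z}.

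Next I would establish the differential equation \eqref{OUPe1}. The excerpt already carries out the integration by parts for smooth paths $\omega\in C_0^\infty(\mathbb{R};\mathrm{E})$ with $\omega(0)=\boldsymbol{0}$, showing that $\z(\cdot)$ solves \eqref{OuE1} pathwise. To pass to arbitrary $\omega\in\Omega(\xi,\mathrm{E})$ I would invoke the continuity of the map $\widetilde{\omega}\mapsto(\hat{\z}(\widetilde{\omega},t),\hat{\z}(\widetilde{\omega},\cdot))$ asserted in Proposition \ref{Ap}, together with the density of smooth paths in $\Omega(\xi,\mathrm{E})$ built into its definition; the identity \eqref{OuE1} then persists in the mild (analytic) sense, which upon inserting the canonical Wiener process \eqref{Wp} is precisely \eqref{OUPe1}.

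The core of the argument is to identify the pathwise object \eqref{DOu1} with the It\^o stochastic convolution \eqref{oup1}. Writing $S(t)=e^{-t(\nu\A+\I)}$, the mild solution of \eqref{OUPe1} is formally $\int_{-\infty}^{t}S(t-\xi)\,\d\W(\xi)$, and the pathwise formula \eqref{DOu1} is exactly its integration-by-parts representation
\begin{align*}
	\int_{-\infty}^{t} S(t-\xi)\,\d\W(\xi) = \int_{-\infty}^{t} (\nu\A+\I)^{1+\delta}S(t-\xi)\big((\nu\A+\I)^{-\delta}\W(t)-(\nu\A+\I)^{-\delta}\W(\xi)\big)\,\d\xi,
\end{align*}
the boundary terms at $\xi=t$ and as $\xi\to-\infty$ vanishing thanks to the smoothing factor $(\nu\A+\I)^{1+\delta}S(t-\xi)\sim(t-\xi)^{-1-\delta}$ near the diagonal and the exponential decay of $S$ against the at-most-$\sqrt{|\xi|}$ growth of the Wiener paths. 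Rigorously this equality is obtained from a stochastic Fubini theorem valid on the M-type $2$ Banach space $\mathfrak{X}$ (as in \cite{Brze1}), for which Hypothesis \ref{assump1}, namely that $\A^{-\delta}:\mathrm{K}\to\mathfrak{X}$ is $\gamma$-radonifying, ensures the requisite integrability of the kernel. I expect this identification to be the main obstacle, since it requires justifying the exchange of the It\^o integral with the deterministic kernel and controlling convergence of the improper integral at $-\infty$. The remaining stationarity claim is comparatively soft, following from the intertwining property \eqref{stationary}, $\z(\vartheta_s\omega,t)=\z(\omega,t+s)$, together with the $\vartheta$-invariance of $\mathbb{P}$, which renders the finite-dimensional laws of $\{\z(t)\}_{t\in\mathbb{R}}$ shift-invariant and hence the process stationary.
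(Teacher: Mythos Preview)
The paper does not supply its own proof of this proposition: it is stated with the attribution \cite[Proposition 6.10]{Brzezniak+Li_2006} and then used without further argument. There is therefore nothing in the paper to compare your proposal against line by line.

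That said, your proposal is a reasonable reconstruction of the argument one finds in the cited reference. The ingredients you isolate---verifying \eqref{ASG} for $\mathbb{A}=\nu\A+\I$ on $\mathfrak{X}$, the density/continuity passage from smooth paths to $\Omega(\xi,\mathrm{E})$ via Proposition \ref{Ap}, the identification of the pathwise integral \eqref{DOu1} with the It\^o convolution \eqref{oup1} through a stochastic Fubini argument in an M-type~2 space under Hypothesis \ref{assump1}, and stationarity from \eqref{stationary} together with $\vartheta$-invariance of $\mathbb{P}$---are exactly the steps that underlie the result in \cite{Brzezniak+Li_2006}. The one place where you should be more careful is the claim that \eqref{ASG} holds on $\mathfrak{X}=\H\cap\mathbb{W}^{1,4}(\mathfrak{D})$: in the present unbounded-domain setting the $\L^p$-analyticity of the Stokes semigroup is a nontrivial input and is precisely what the $\widetilde{\L}^p$ framework of \cite{Farwig+Kozono+Sohr_2005,Farwig+Kozono+Sohr_2007,Kunstmann_2010} (invoked earlier in the paper) is designed to provide; you should cite that explicitly rather than leave it as a remark.
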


	Let us now provide some consequences of the previous discussion which will be used in the sequel. 
	\begin{lemma}[{\cite[Lemma 3.5]{Kinra+Cipriano_STGF}}]\label{Bddns4}
		For each $\omega\in \Omega$ and $c>0$, we obtain 
		\begin{align*}
			\lim_{t\to - \infty} \|\z(\omega,t)\|^2_{2}\  e^{c t} = 0.
		\end{align*}
	\end{lemma}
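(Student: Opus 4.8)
The plan is to combine the at-most-square-root-in-time growth of the Ornstein--Uhlenbeck process $\z(\omega,\cdot)$ with the exponentially decaying factor $e^{ct}$ (here $c>0$ and $t\to-\infty$). By construction $\z(\omega,\cdot)=\hat{\z}((\nu\A+\I);(\nu\A+\I)^{-\delta}\omega,\cdot)$ belongs to $C_{1/2}(\mathbb{R};\mathfrak{X})$, and Proposition \ref{Ap}, specifically the bound \eqref{X_bound_of_z} applied with $\mathbb{Y}=\mathfrak{X}$ and $\widetilde{\omega}=(\nu\A+\I)^{-\delta}\omega$, furnishes a constant $C_{\omega}>0$, depending on $\omega$ only through $\|(\nu\A+\I)^{-\delta}\omega\|_{C^{\xi}_{1/2}(\mathbb{R};\mathfrak{X})}$, such that
\[
\|\z(\omega,t)\|_{\mathfrak{X}}\leq C_{\omega}\,(1+|t|^{1/2}),\qquad t\in\mathbb{R}.
\]

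First I would pass from the $\mathfrak{X}$-norm to the $\|\cdot\|_2$-norm. Since $\|\cdot\|_{\mathfrak{X}}=\|\cdot\|_{2}+\|\cdot\|_{\mathbb{W}^{1,4}}\geq\|\cdot\|_{2}$, the previous display immediately gives $\|\z(\omega,t)\|_{2}\leq C_{\omega}(1+|t|^{1/2})$, whence
\[
\|\z(\omega,t)\|^2_{2}\,e^{ct}\leq C_{\omega}^2\,(1+|t|^{1/2})^2\,e^{ct},\qquad t\in\mathbb{R}.
\]
Then I would let $t\to-\infty$: writing $s=-t\to+\infty$, the right-hand side equals $C_{\omega}^2(1+s^{1/2})^2 e^{-cs}$, a polynomial in $\sqrt{s}$ multiplied by a decaying exponential (recall $c>0$), which tends to $0$. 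As the left-hand side is nonnegative, a squeeze argument yields $\lim_{t\to-\infty}\|\z(\omega,t)\|^2_{2}\,e^{ct}=0$, as claimed.

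There is no substantial obstacle here: the entire content is already encoded in the growth estimate \eqref{X_bound_of_z} supplied by the Ornstein--Uhlenbeck machinery of \cite{Brzezniak+Li_2006} and recorded in Proposition \ref{Ap}. The only points requiring (minor) care are that \eqref{X_bound_of_z} controls the $\mathfrak{X}$-norm whereas the statement concerns $\|\cdot\|_{2}$, which is resolved by the trivial domination $\|\cdot\|_{2}\leq\|\cdot\|_{\mathfrak{X}}$, and that the constant $C_{\omega}$ is finite but $\omega$-dependent, which is harmless since the limit is taken pathwise for each fixed $\omega\in\Omega$.
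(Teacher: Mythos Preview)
Your proof is correct. The paper does not supply its own proof of this lemma; it simply cites \cite[Lemma 3.5]{Kinra+Cipriano_STGF}. Your argument is exactly the natural one: the growth bound \eqref{X_bound_of_z} from Proposition~\ref{Ap} gives $\|\z(\omega,t)\|_{\mathfrak{X}}\leq C_\omega(1+|t|^{1/2})$, the trivial domination $\|\cdot\|_2\leq\|\cdot\|_{\mathfrak{X}}$ transfers this to the $\L^2$-norm, and polynomial-times-exponential decay does the rest.
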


	\begin{lemma}[{\cite[Lemma 3.6]{Kinra+Cipriano_STGF}}]\label{Bddns5}
		For each $\omega\in \Omega$ and $c>0$, we get 
		\begin{align*}
			\int_{- \infty}^{0} \bigg\{ 1 + \|\z(\omega,t)\|^2_{2} +  \|\z(\omega,t)\|^4_{\mathbb{W}^{1,4}} \bigg\}e^{c t} \d t < \infty.
		\end{align*}
	\end{lemma}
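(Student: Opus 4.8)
The plan is to reduce the integrability statement to the pathwise polynomial growth bound \eqref{X_bound_of_z} for the stationary Ornstein--Uhlenbeck process, and then to absorb the resulting polynomial in $|t|$ against the exponential weight $e^{ct}$, which decays as $t\to-\infty$ because $c>0$. First I would apply Proposition \ref{Ap} with the generator $\mathbb{A}=\nu\A+\I$ (which satisfies the decay condition \eqref{ASG} with $\gamma>0$ thanks to the shift by $\I$), choice $\mathbb{Y}=\mathfrak{X}$, and input path $\widetilde{\omega}=(\nu\A+\I)^{-\delta}\omega$. Since $\z(\omega,\cdot)=\hat{\z}((\nu\A+\I);(\nu\A+\I)^{-\delta}\omega,\cdot)$ by definition, estimate \eqref{X_bound_of_z} gives, for each fixed $\omega$,
\begin{align*}
	\|\z(\omega,t)\|_{\mathfrak{X}}\leq C_{\omega}\,(1+|t|^{1/2}),\qquad t\in\R,
\end{align*}
with $C_{\omega}:=C\,\|(\nu\A+\I)^{-\delta}\omega\|_{\mathrm{C}^{\xi}_{1/2}(\R;\mathfrak{X})}$. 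The single point that is not automatic is the finiteness $C_{\omega}<\infty$ for each $\omega\in\Omega(\xi,\mathrm{E})$; this is precisely the membership $(\nu\A+\I)^{-\delta}\omega\in\mathrm{C}^{\xi}_{1/2}(\R;\mathfrak{X})$ recorded in the construction of $\z$, which follows from $\omega\in\mathrm{C}^{\xi}_{1/2}(\R;\mathrm{E})$ together with the boundedness of $(\nu\A+\I)^{-\delta}\colon\mathrm{E}\to\mathfrak{X}$ built into the definition of $\mathrm{E}$.

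Because $\|\cdot\|_{\mathfrak{X}}=\|\cdot\|_{2}+\|\cdot\|_{\mathbb{W}^{1,4}}$, both $\|\z(\omega,t)\|_{2}$ and $\|\z(\omega,t)\|_{\mathbb{W}^{1,4}}$ are dominated by $\|\z(\omega,t)\|_{\mathfrak{X}}$. Raising the displayed bound to the relevant powers and using the elementary inequalities $(1+a)^{2}\leq 2(1+a^{2})$ and $(1+a)^{4}\leq 8(1+a^{4})$ with $a=|t|^{1/2}$, I obtain
\begin{align*}
	\|\z(\omega,t)\|^{2}_{2}\leq 2C_{\omega}^{2}(1+|t|)\qquad\text{and}\qquad\|\z(\omega,t)\|^{4}_{\mathbb{W}^{1,4}}\leq 8C_{\omega}^{4}(1+t^{2}).
\end{align*}
Hence the integrand $1+\|\z(\omega,t)\|^{2}_{2}+\|\z(\omega,t)\|^{4}_{\mathbb{W}^{1,4}}$ is bounded above by a polynomial in $|t|$ of degree two whose coefficients depend on $\omega$ (through $C_{\omega}$) but not on $t$.

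Finally I would conclude by integrating this polynomial bound against $e^{ct}$ over $(-\infty,0]$. After the substitution $s=-t$, each monomial reduces to $\int_{0}^{\infty}s^{k}e^{-cs}\,\d s=k!/c^{k+1}<\infty$ for $k\in\{0,1,2\}$ and $c>0$, so the exponential decay dominates the polynomial growth and the whole integral is finite, which is the assertion. There is essentially no analytic obstacle beyond invoking \eqref{X_bound_of_z}: the only substantive verification is the finiteness of the pathwise constant $C_{\omega}$, which is guaranteed by the function-space setup as noted above. As an alternative, the $\|\z(\omega,t)\|_{2}^{2}$ contribution could instead be handled through Lemma \ref{Bddns4}, but the uniform growth bound treats all three terms at once and is cleaner.
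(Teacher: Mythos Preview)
Your argument is correct. The paper does not supply its own proof of this lemma; it simply cites \cite[Lemma 3.6]{Kinra+Cipriano_STGF}. Your approach---invoking the pathwise growth bound \eqref{X_bound_of_z} for $\z(\omega,\cdot)=\hat{\z}((\nu\A+\I);(\nu\A+\I)^{-\delta}\omega,\cdot)\in C_{1/2}(\R;\mathfrak{X})$, extracting polynomial (degree two) growth in $|t|$ of the integrand, and then absorbing it against the exponential weight $e^{ct}$---is the natural argument from the tools recorded in this paper, and is presumably what underlies the cited proof as well. The one nontrivial input you identify, namely $(\nu\A+\I)^{-\delta}\omega\in C^{\xi}_{1/2}(\R;\mathfrak{X})$ for $\omega\in\Omega(\xi,\mathrm{E})$, is exactly the parenthetical remark the paper makes when defining $\z$, so your verification of $C_{\omega}<\infty$ is in line with the setup.
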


	\begin{definition}\label{RA2}
		A function $\kappa: \Omega\to (0, \infty)$ belongs to the class $\mathfrak{K}$ if and only if 
		\begin{align}
			\lim_{t\to \infty} [\kappa(\vartheta_{-t}\omega)]^2 e^{-c t } = 0, 
		\end{align}
		for all $c>0$.
	\end{definition}
	Let us denote the class of all closed and bounded random sets $D$ on $\H$ by $\mathfrak{DK}$ such that the radius function $\Omega\ni \omega \mapsto \kappa(D(\omega)):= \sup\{\|x\|_{2}:x\in D(\omega)\}$ belongs to the class $\mathfrak{K}.$

	\subsection{Random dynamical system}

	Remember that Hypothesis \ref{assump1} is satisfied and that $\delta$ has the property stated there. Let us fix $\nu> 0$, and the parameter $\xi \in (\delta, 1/2)$.
	
	Using a random transformation (known as Doss-Sussmann transformation, \cite{Doss_1977,Sussmann_1978}), we get a random partial differential equation which equivalent to   system \eqref{STGF}. Let us define 
	\begin{align}\label{D-S_trans}
		\y^{\varsigma}(\cdot):=\v_{\varsigma}(\cdot) - \varsigma\z(\omega,\cdot).
	\end{align}
	 Then $\y(\cdot)$ satisfies the following system:
	\begin{equation}\label{CTGF}
		\left\{
		\begin{aligned}
			\frac{\d\y^{\varsigma}}{\d t} &= -\nu \A\y^{\varsigma} -  \B(\y^{\varsigma} + {\varsigma} \z) -\alpha\J(\y^{\varsigma} + {\varsigma} \z)-\beta\K(\y^{\varsigma} + {\varsigma}\z) + \varsigma \z + \mathcal{P}\f, \\
			\y^{\varsigma}(0)& = \boldsymbol{\psi} - {\varsigma}\z(\omega,0)=:\y^{\varsigma}_0.
		\end{aligned}
		\right.
	\end{equation}
	Since $\z (\omega) \in C_{1/2} (\mathbb{R};\mathfrak{X}), $ then $\z (\omega,0)$ is a well defined element of $\H$. For each fixed $\omega\in\Omega$,   system \eqref{CTGF} is a deterministic system. Let us now provide the definition of a weak solution (in the deterministic sense, for each  fixed $\omega$) for \eqref{CTGF}. 
	\begin{definition}\label{defn-CTGF}
		Assume that $\y^{\varsigma}_0 \in \H$, $\z\in\mathrm{L}^2_{\mathrm{loc}}([0,\infty);\H)\cap\mathrm{L}^4_{\mathrm{loc}}([0,\infty);\mathbb{W}^{1,4}(\mathfrak{D}))$ and $\f\in \H^{-1}(\mathfrak{D})$. A function $\y^{\varsigma}$ is called a \textit{weak solution} of   system \eqref{CTGF} on the time interval $[0, \infty)$, if 
		\begin{align*}
			\y^{\varsigma} &\in  \mathrm{C}([0,\infty); \H) \cap \mathrm{L}^{2}_{\mathrm{loc}}(0,\infty; \V)\cap \mathrm{L}^{4}_{\mathrm{loc}}(0,\infty; \mathbb{W}^{1,4}(\mathfrak{D})), 
			\\
			\frac{\d\y^{\varsigma}}{\d t}&\in\mathrm{L}^{2}_{\mathrm{loc}}(0,\infty;\V') + \mathrm{L}^{\frac43}_{\mathrm{loc}}(0,\infty;\mathbb{W}^{-1,\frac43}(\mathfrak{D})),
		\end{align*}
		and it satisfies 
		\begin{itemize}
			\item [(i)] for any  $\boldsymbol{\phi}\in \X,$
			\begin{align*}
				&	\left<\frac{\d\y^{\varsigma}(t)}{\d t}, \boldsymbol{\phi}\right>
				 =  - \left\langle \nu \A\y^{\varsigma}(t)+\B(\y^{\varsigma}(t)+\varsigma\z(t))+\alpha\J(\y^{\varsigma}(t)+ {\varsigma}\z(t))+\beta\K(\y^{\varsigma}(t)+{\varsigma}\z(t)) - \varsigma\z(t)- \f , \boldsymbol{\phi} \right\rangle,
			\end{align*}
			for a.e. $t\in[0,\infty);$
			\item [(ii)] the initial data:
			$$\y^{\varsigma}(0)=\y^{\varsigma}_0 \ \text{ in }\ \H.$$
		\end{itemize}
	\end{definition}

	\begin{theorem}\label{solution-stochastic}
			Assume that \eqref{third-grade-paremeters-res} holds, $\v^{\varsigma}_0 \in \H$, $\f\in \H^{-1}(\mathfrak{D}) $ and $\z\in\mathrm{L}^2_{\mathrm{loc}}(\R;\H)\cap\mathrm{L}^4_{\mathrm{loc}}(\R;\mathbb{W}^{1,4}(\mathfrak{D}))$. Then, there exists a unique weak solution $\y^{\varsigma}(\cdot)$ to   system \eqref{CTGF} in the sense of Definition \ref{defn-CTGF}.
		\end{theorem}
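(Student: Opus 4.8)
Fix $\omega\in\Omega$ and recall that $\z:=\z(\omega,\cdot)$ is then a prescribed function lying in $\mathrm{L}^2_{\mathrm{loc}}(\R;\H)\cap\mathrm{L}^4_{\mathrm{loc}}(\R;\mathbb{W}^{1,4}(\mathfrak{D}))$, so that \eqref{CTGF} is a genuinely deterministic evolution equation governed by the locally monotone operator $\mathscr{G}$ of Lemma \ref{lem-Loc-Monotinicity} together with the lower-order inhomogeneity $\varsigma\z+\mathcal{P}\f$. The plan is to reproduce the Galerkin-plus-monotonicity scheme underlying Theorem \ref{solution} (see also \cite{Hamza+Paicu_2007,Kinra+Cipriano_TGF}), the only new feature being the cross terms produced by the shift $\y^{\varsigma}\mapsto\y^{\varsigma}+\varsigma\z$ inside the nonlinearities $\B,\J,\K$. \emph{Existence} will follow from uniform a priori bounds, weak compactness, and a local-monotonicity (Minty--Browder) identification of the nonlinear limits; \emph{uniqueness} will follow directly from \eqref{CL1}, exactly as in Lemma \ref{Uniqueness}.

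First I would build finite-dimensional Galerkin approximations $\y^{\varsigma}_n$ of \eqref{CTGF} on a Galerkin basis of $\H$ (local solvability of the resulting ODE system being immediate from the continuity of $\B,\J,\K$). For the a priori estimates I would test with $\y^{\varsigma}_n$ and rewrite the nonlinear pairings in terms of $v_n:=\y^{\varsigma}_n+\varsigma\z$: since $\langle\B(v_n),v_n\rangle=0$ and $\langle\K(v_n),v_n\rangle=\tfrac12\|\Arm(v_n)\|_4^4$, while the cubic term $\langle\J(v_n),v_n\rangle$ vanishes for $d=2$ (Remark \ref{J=0_d=2}) and is absorbed for $d=3$ using $|\alpha|<\sqrt{2\nu\beta}$ from \eqref{third-grade-paremeters-res}, one recovers the dissipation $\nu^{*}\|\Arm(v_n)\|_2^2+\beta^{*}\|\Arm(v_n)\|_4^4$ as in Lemma \ref{lem-Absorb}. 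The remaining cross terms, e.g. $-\varsigma\langle\B(v_n),\z\rangle$, $-\varsigma\beta\langle\K(v_n),\z\rangle$ and $\varsigma(\z,\y^{\varsigma}_n)$, are controlled by Hölder's inequality (using the extension of $b$ in Remark \ref{rem-trilinear-ext}), the operator bounds for $\J,\K$, and Young's inequality; the higher powers of $\|\Arm(v_n)\|_4$ are then absorbed into the dissipation while the surviving factors $\|\z\|_{\mathbb{W}^{1,4}}$, $\|\z\|_2$ are locally integrable by hypothesis. This gives bounds, uniform in $n$, in $\mathrm{L}^{\infty}_{\mathrm{loc}}(0,\infty;\H)\cap\mathrm{L}^2_{\mathrm{loc}}(0,\infty;\V)\cap\mathrm{L}^4_{\mathrm{loc}}(0,\infty;\mathbb{W}^{1,4}(\mathfrak{D}))$, and together with $\|\A\v\|_{\X'}\le\|\v\|_{\X}$, $\|\B(\u,\v)\|_{\mathbb{W}^{-1,\frac43}}\le C\|\u\|_4\|\v\|_4$, $\|\J(\v)\|_{\X'}\le C\|\v\|_{\X}^2$ and $\|\K(\v)\|_{\X'}\le C\|\v\|_{\X}^3$, a uniform bound on $\frac{\d\y^{\varsigma}_n}{\d t}$ in $\mathrm{L}^2_{\mathrm{loc}}(0,\infty;\V')+\mathrm{L}^{\frac43}_{\mathrm{loc}}(0,\infty;\mathbb{W}^{-1,\frac43}(\mathfrak{D}))$. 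Extracting a subsequence produces a limit $\y^{\varsigma}$ converging weakly-$*$ in these spaces.

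The main obstacle is passing to the limit in the nonlinear terms: $\B,\J,\K$ are not weakly continuous and, since $\mathfrak{D}$ may be unbounded, no compact Sobolev embedding is available to upgrade weak convergence to strong. I would resolve this exactly as in \cite{Hamza+Paicu_2007,Kinra+Cipriano_TGF} by exploiting the \emph{local monotonicity} \eqref{CL1}: for a test function $\boldsymbol{\phi}$ one introduces the exponential weight $\exp\{-\tfrac{(\mathrm{M}_d)^2}{2\nu\varepsilon_0}\int_0^t\|\Arm(\boldsymbol{\phi}+\varsigma\z)\|_4^2\,\d s\}$ (the integral being finite since $\boldsymbol{\phi}+\varsigma\z\in\mathrm{L}^4_{\mathrm{loc}}(\mathbb{W}^{1,4})$), writes the weighted energy balance for $v_n$, uses weak lower semicontinuity of the norms together with the time-derivative bound, and finally applies the Minty trick (taking $\boldsymbol{\phi}=\y^{\varsigma}\mp\lambda\boldsymbol{\varphi}$ and letting $\lambda\to0^{+}$) to identify $\mathscr{G}(\y^{\varsigma}+\varsigma\z)$ as the weak limit of $\mathscr{G}(v_n)$. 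The $\mathrm{C}([0,\infty);\H)$ regularity and attainment of the initial datum then follow from the time-derivative bound by a Lions--Magenes type interpolation argument together with the energy equality. \emph{Uniqueness} is the easy part: if $\y^{\varsigma}_1,\y^{\varsigma}_2$ solve \eqref{CTGF} with the same $\z$, then $\mathfrak{Y}:=\y^{\varsigma}_1-\y^{\varsigma}_2$ satisfies
\[
\frac{\d\mathfrak{Y}(t)}{\d t} = -\bigl[\mathscr{G}(\y^{\varsigma}_1(t)+\varsigma\z(t)) - \mathscr{G}(\y^{\varsigma}_2(t)+\varsigma\z(t))\bigr],
\]
since the inhomogeneous terms cancel; testing with $\mathfrak{Y}$ and invoking \eqref{CL1} with $v_1=\y^{\varsigma}_1+\varsigma\z$, $v_2=\y^{\varsigma}_2+\varsigma\z$ (so that $v_1-v_2=\mathfrak{Y}$) yields, after \eqref{Poin}, precisely the differential inequality from the proof of Lemma \ref{Uniqueness}, whose coefficient $\|\Arm(\y^{\varsigma}_2+\varsigma\z)\|_4^2$ is locally integrable in time. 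Gronwall's inequality then forces $\mathfrak{Y}\equiv0$.
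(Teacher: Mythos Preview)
The paper does not actually give a proof of Theorem \ref{solution-stochastic}: the theorem is stated and then the text moves directly on, the result being imported from the earlier works \cite{Kinra+Cipriano_TGF,Kinra+Cipriano_STGF} (just as Theorem \ref{solution} is stated without proof and attributed to \cite{Hamza+Paicu_2007}). So there is no in-paper argument to compare against.

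That said, your plan is precisely the scheme carried out in those references: Galerkin approximation, energy estimates obtained by testing with $\y^{\varsigma}_n$ and splitting $\y^{\varsigma}_n=(\y^{\varsigma}_n+\varsigma\z)-\varsigma\z$ so that the structural identities $\langle\B(v),v\rangle=0$ and $\langle\K(v),v\rangle=\tfrac12\|\Arm(v)\|_4^4$ and the absorption of $\langle\J(v),v\rangle$ via \eqref{third-grade-paremeters-res} can be used, followed by the Minty--Browder identification based on the local monotonicity \eqref{CL1}, and uniqueness by the same inequality plus Gronwall. Your sketch is correct and matches the approach of the cited literature; the only points one would need to make fully explicit in a self-contained write-up are the choice of Galerkin basis on a possibly unbounded $\mathfrak{D}$ (the paper's references use a special basis built from a compact auxiliary operator), and the verification that the exponential weight in the Minty step is finite, which you already flag via $\boldsymbol{\phi}+\varsigma\z\in\mathrm{L}^4_{\mathrm{loc}}(\mathbb{W}^{1,4})$.
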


	\begin{definition}
		We define a map $\Psi_{\varsigma} : [0,\infty) \times \Omega \times \H \to \H$ by
		\begin{align}
			(t, \omega, \boldsymbol{\psi}) \mapsto \y^{\varsigma}(t)  + \varsigma \z(\omega,t) \in \H,
		\end{align}
		where $\y^{\varsigma}(\cdot) $ is the unique weak solution to   system \eqref{CTGF} with the initial condition $\boldsymbol{\psi} - {\varsigma}\z(\omega,0).$
	\end{definition}

	 In fact, we have the following result for $\Psi_{\varsigma}$.

	\begin{theorem}[{\cite[Theorem 3.11]{Kinra+Cipriano_STGF}}]
		$(\Psi_{\varsigma}, \vartheta)$ is a random dynamical system.
	\end{theorem}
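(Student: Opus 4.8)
The plan is to establish the two defining ingredients of a (continuous) random dynamical system over the metric dynamical system $(\Omega(\xi,\mathrm{E}),\mathcal{F},\mathbb{P},\vartheta)$ already provided by Proposition \ref{m-DS1}: the measurable cocycle identity for $\Psi_{\varsigma}$, together with continuity in the initial datum. Well-definedness is the first point: by Theorem \ref{solution-stochastic}, for each fixed $\omega$ the transformed equation \eqref{CTGF} driven by $\z(\omega,\cdot)$ admits a \emph{unique} weak solution $\y^{\varsigma}(\cdot)$ issuing from $\boldsymbol{\psi}-\varsigma\z(\omega,0)\in\H$, so that $\Psi_{\varsigma}(t,\omega,\boldsymbol{\psi})=\y^{\varsigma}(t)+\varsigma\z(\omega,t)$ is unambiguously an element of $\H$. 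The identity property is immediate, since $\y^{\varsigma}(0)=\boldsymbol{\psi}-\varsigma\z(\omega,0)$ gives $\Psi_{\varsigma}(0,\omega,\boldsymbol{\psi})=\boldsymbol{\psi}$.

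The core of the argument is the cocycle identity $\Psi_{\varsigma}(t+s,\omega,\boldsymbol{\psi})=\Psi_{\varsigma}(t,\vartheta_s\omega,\Psi_{\varsigma}(s,\omega,\boldsymbol{\psi}))$ for $s,t\geq0$, and the engine driving it is the stationarity relation \eqref{stationary}, which for the Ornstein-Uhlenbeck process reads $\z(\vartheta_s\omega,r)=\z(\omega,r+s)$ (this transfers from \eqref{stationary} because the linear operator $(\nu\A+\I)^{-\delta}$ commutes with the shift $\vartheta_s$ acting on paths). Writing $\boldsymbol{\phi}:=\Psi_{\varsigma}(s,\omega,\boldsymbol{\psi})$, I would let $\y^{\varsigma}_1$ solve \eqref{CTGF} with noise $\z(\omega,\cdot)$ and datum $\boldsymbol{\psi}-\varsigma\z(\omega,0)$, let $\y^{\varsigma}_2$ solve \eqref{CTGF} with noise $\z(\vartheta_s\omega,\cdot)$ and datum $\boldsymbol{\phi}-\varsigma\z(\vartheta_s\omega,0)$, and introduce the shifted trajectory $\widetilde{\y}(r):=\y^{\varsigma}_1(r+s)$ on $[0,t]$. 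Substituting $\z(\omega,r+s)=\z(\vartheta_s\omega,r)$ shows that $\widetilde{\y}$ solves precisely \eqref{CTGF} with noise $\z(\vartheta_s\omega,\cdot)$, while the same relation gives $\widetilde{\y}(0)=\y^{\varsigma}_1(s)=\boldsymbol{\phi}-\varsigma\z(\omega,s)=\boldsymbol{\phi}-\varsigma\z(\vartheta_s\omega,0)$, matching the datum of $\y^{\varsigma}_2$. Uniqueness in Theorem \ref{solution-stochastic} then forces $\widetilde{\y}\equiv\y^{\varsigma}_2$ on $[0,t]$, and evaluating at $r=t$ and adding $\varsigma\z(\omega,t+s)=\varsigma\z(\vartheta_s\omega,t)$ to both sides yields the desired identity.

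It remains to address measurability and continuity, for which I would invoke continuous dependence of the weak solution of \eqref{CTGF} on its data $(\boldsymbol{\psi},\f,\z)$ in $\H\times\H^{-1}(\mathfrak{D})\times\big(\mathrm{L}^2_{\mathrm{loc}}([0,\infty);\V)\cap\mathrm{L}^4_{\mathrm{loc}}([0,\infty);\mathbb{W}^{1,4}(\mathfrak{D}))\big)$. This estimate is obtained by the same energy and local monotonicity bounds underlying Lemma \ref{lem-Loc-Monotinicity} and Theorem \ref{solution-stochastic}: one tests the equation for the difference of two solutions against that difference, controls the nonlinear contributions of $\B,\J,\K$ (now evaluated at arguments involving $\z$), and closes via Gronwall's inequality. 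In particular the map $\boldsymbol{\psi}\mapsto\Psi_{\varsigma}(t,\omega,\boldsymbol{\psi})$ is continuous, and jointly with the built-in continuity of $t\mapsto\y^{\varsigma}(t)$ one gets continuity in $(t,\boldsymbol{\psi})$ for fixed $\omega$. Measurability in $\omega$ follows from the measurable dependence $\omega\mapsto\z(\omega,\cdot)$ guaranteed by Proposition \ref{Ap} composed with continuous dependence on $\z$, and a standard Carath\'eodory argument then upgrades this to joint measurability of $(t,\omega,\boldsymbol{\psi})\mapsto\Psi_{\varsigma}(t,\omega,\boldsymbol{\psi})$. The main obstacle is the cocycle step: everything hinges on the exact matching between the pathwise time-shift $\widetilde{\y}(r)=\y^{\varsigma}_1(r+s)$ and the stationarity \eqref{stationary}, which is what converts the $\z$-nonautonomous system \eqref{CTGF} into a genuine cocycle, with uniqueness from Theorem \ref{solution-stochastic} supplying the identification.
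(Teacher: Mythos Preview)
The paper does not supply its own proof of this statement: it is quoted verbatim as \cite[Theorem~3.11]{Kinra+Cipriano_STGF} and left without argument, so there is nothing in the present paper to compare your proposal against beyond the citation itself. Your outline is the standard route to the RDS property and is essentially what the cited reference carries out: well-definedness and the identity $\Psi_{\varsigma}(0,\omega,\cdot)=\mathrm{id}$ from Theorem~\ref{solution-stochastic}; the cocycle law from the shift relation \eqref{stationary} for $\z$ combined with uniqueness; and joint measurability/continuity from continuous dependence of the solution of \eqref{CTGF} on $(\boldsymbol{\psi},\f,\z)$ (this is precisely the content of the lemma the authors state in \cite{Kinra+Cipriano_STGF} and allude to here) together with the continuity of $\omega\mapsto\z(\omega,\cdot)$ from Proposition~\ref{Ap}. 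One small correction: the relevant topology for $\z$ in the continuous-dependence step is $\mathrm{L}^2_{\mathrm{loc}}([0,\infty);\H)\cap\mathrm{L}^4_{\mathrm{loc}}([0,\infty);\mathbb{W}^{1,4}(\mathfrak{D}))$, not $\mathrm{L}^2_{\mathrm{loc}}([0,\infty);\V)$ as you wrote, since the Ornstein--Uhlenbeck process takes values in $\mathfrak{X}=\H\cap\mathbb{W}^{1,4}(\mathfrak{D})$ rather than in $\V$.
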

	Having established the necessary framework, we now present the weak solution  (in analytic sense) to system \eqref{STGF} with initial data $\x\in\H$ prescribed at time $s\in \R.$
	
	\begin{theorem}\label{STGF-Sol}
		In the framework of Definition \ref{Def_u}, suppose that $\v^{\varsigma}(t)=\y^{\varsigma}(t)+{\varsigma}\z(\omega,t), t\geq s,$ where $\y^{\varsigma}$ is the unique solution to system \eqref{CTGF} with initial data $\boldsymbol{\psi} - {\varsigma}\z(\omega,s)$ at time $s$. If the process $\{\v^{\varsigma}(t), \ t\geq s\},$ has trajectories in $\mathrm{C}([s, \infty); \H) \cap  \mathrm{L}^{4}_{\mathrm{loc}}([s, \infty); \mathbb{W}^{1,4}(\mathfrak{D}))$, then it is a solution to system \eqref{STGF}. Vice-versa, if a process $\{\v^{\varsigma}(t), t\geq s\},$ with trajectories in $\mathrm{C}([s, \infty); \H) \cap \mathrm{L}^{4}_{\mathrm{loc}}([s, \infty); \mathbb{W}^{1,4}(\mathfrak{D}))$ is a solution to system \eqref{STGF}, then a process $\{\y^{\varsigma}(t), t\geq s\},$ defined by $\y^{\varsigma}(t) = \v^{\varsigma}(t)- {\varsigma} \z(\omega,t), t\geq s,$ is a solution to \eqref{CTGF} on $[s, \infty).$
	\end{theorem}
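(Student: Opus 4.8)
The transformation \eqref{D-S_trans} is engineered precisely so that subtracting the stationary Ornstein--Uhlenbeck process $\varsigma\z(\omega,\cdot)$ cancels the It\^o integral, turning the stochastic equation \eqref{STGF} into the pathwise random equation \eqref{CTGF}. The plan is therefore to prove both implications by direct substitution, with the weak form of the defining OU equation \eqref{OUPe1} supplying the exact identity needed to reproduce the noise term. Since the test function $\boldsymbol{\phi}\in\X$ is deterministic and time-independent, I would use $\varsigma\int_s^t(\boldsymbol{\phi},\d\W(\xi))=\varsigma(\boldsymbol{\phi},\W(t)-\W(s))$ throughout, so that the stochastic integral is identified with a pathwise Wiener increment.

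For the forward implication, suppose $\y^{\varsigma}$ solves \eqref{CTGF} and set $\v^{\varsigma}=\y^{\varsigma}+\varsigma\z$. Integrating the weak formulation of Definition \ref{defn-CTGF} over $[s,t]$ against $\boldsymbol{\phi}$ gives an identity for $(\y^{\varsigma}(t)-\y^{\varsigma}(s),\boldsymbol{\phi})$ in which $\B,\J,\K$ already act on $\y^{\varsigma}+\varsigma\z=\v^{\varsigma}$. By Proposition \ref{SOUP1}, $\z$ solves \eqref{OUPe1}, so testing that equation against $\boldsymbol{\phi}$ and multiplying by $\varsigma$ yields
\begin{align*}
	\varsigma(\z(t)-\z(s),\boldsymbol{\phi}) + \varsigma\int_s^t\langle(\nu\A+\I)\z(\xi),\boldsymbol{\phi}\rangle\d\xi = \varsigma(\boldsymbol{\phi},\W(t)-\W(s)).
\end{align*}
Adding the two relations, the left-hand sides combine to $(\v^{\varsigma}(t)-\v^{\varsigma}(s),\boldsymbol{\phi})$, while on the right the linear dissipation terms recombine as $\nu\A\y^{\varsigma}+\varsigma\nu\A\z=\nu\A\v^{\varsigma}$, and the zeroth-order contribution $+\varsigma\z$ appearing in \eqref{CTGF} cancels exactly against the $\varsigma\I\z$ term produced by \eqref{OUPe1}. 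What survives is precisely the weak formulation of Definition \ref{Def_u} for $\v^{\varsigma}$, the residual increment $\varsigma(\boldsymbol{\phi},\W(t)-\W(s))$ reproducing the stochastic integral. The initial condition matches since $\v^{\varsigma}(s)=\y^{\varsigma}(s)+\varsigma\z(\omega,s)=\boldsymbol{\psi}$.

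The converse follows by running the same computation in reverse: given a solution $\v^{\varsigma}$ of \eqref{STGF}, define $\y^{\varsigma}=\v^{\varsigma}-\varsigma\z$ and subtract the tested OU identity to eliminate the It\^o integral, recovering \eqref{CTGF}. The bulk of the argument is bookkeeping; the one genuine point requiring care is justifying that every term is well-defined and that the stochastic integral may legitimately be replaced by the pathwise increment. This is where the regularity enters: $\z(\omega,\cdot)\in C_{1/2}(\R;\mathfrak{X})$ with $\mathfrak{X}=\H\cap\mathbb{W}^{1,4}(\mathfrak{D})$ (so that $\B,\J,\K$ act on $\v^{\varsigma}$ and the pairing $\langle\A\z,\boldsymbol{\phi}\rangle$ is meaningful), together with the trajectory regularity $\v^{\varsigma},\y^{\varsigma}\in\mathrm{C}([s,\infty);\H)\cap\mathrm{L}^4_{\mathrm{loc}}([s,\infty);\mathbb{W}^{1,4}(\mathfrak{D}))$ and the integrability furnished by Lemma \ref{Bddns5}. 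I expect the main obstacle to lie solely at this level, namely in justifying the substitution of the It\^o integral by the pathwise OU increment and checking the integrability of each term, rather than in any new \apriori estimate, since existence and uniqueness of $\y^{\varsigma}$ is already supplied by Theorem \ref{solution-stochastic}.
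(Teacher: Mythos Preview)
The paper states Theorem \ref{STGF-Sol} without proof, treating the equivalence as a standard consequence of the Doss--Sussmann transformation once the Ornstein--Uhlenbeck process and the pathwise solvability of \eqref{CTGF} are in place. Your proposal supplies exactly the argument one would expect to fill this gap: test the weak form of \eqref{CTGF} and the weak form of \eqref{OUPe1} against $\boldsymbol{\phi}\in\X$, add (or subtract) them, and observe that the $\varsigma\z$ term in \eqref{CTGF} cancels the $\varsigma\I\z$ contribution from the OU generator while $\nu\A\y^{\varsigma}+\varsigma\nu\A\z$ recombines into $\nu\A\v^{\varsigma}$, leaving precisely the identity in Definition \ref{Def_u} with the Wiener increment on the right. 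Your treatment of the stochastic integral as a pathwise increment $(\boldsymbol{\phi},\W(t)-\W(s))$ for deterministic $\boldsymbol{\phi}$ is also the right device here, and your remarks on regularity (that $\z(\omega,\cdot)\in C_{1/2}(\R;\mathfrak{X})$ so that $\A\z$ pairs with $\boldsymbol{\phi}$ and $\B,\J,\K$ act on $\v^{\varsigma}$) address the only substantive point. This is the standard proof; nothing is missing.
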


We define, for $\varsigma\in(0,1]$, $\x \in \H,\ \omega \in \Omega,$ and $t\geq s,$
\begin{align}\label{combine_sol}
	\v_{\varsigma}(t, \omega ; s, \x) := \Psi_{\varsigma}(t-s, \vartheta_s \omega,\x) = \y^{\varsigma}\big(t, \omega; s, \x - \varsigma \z(\omega,s)\big) + \varsigma \z(\omega,t),
\end{align}
then the process $\{\v(t): \ t\geq s\}$ is a weak solution to  system $\eqref{STGF}$ in sense of Definition \ref{defn-CTGF}, for $t>s$, for each $s\in \mathbb{R}$ and each $\x \in \H$.

The following result is given in view of the study \cite{Kinra+Cipriano_STGF}, where the existence of random attractors for system \eqref{STGF} was established.
\begin{theorem}\label{thm-random-attractor}
	Let $\omega\in\Omega$. Assume that condition \eqref{third-grade-paremeters-res} is fulfilled and that Hypotheses \ref{assumpO}-\ref{assump1} hold. Let $\mathfrak{D}$ denote either a bounded or an unbounded domain, and suppose that $\f \in \H^{-1}(\mathfrak{D})$ in the former case or $\f \in \L^{2}(\mathfrak{D})$ in the latter. Then, the continuous random dynamical system  $\Psi_{\varsigma}(\cdot,\cdot,\cdot)$ associated with system \eqref{STGF} admits random attractors $\mathscr{A}_{\varsigma}(\omega)\in \mathfrak{DK}$.
\end{theorem}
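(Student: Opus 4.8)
The plan is to reduce the statement to the abstract existence criterion for random attractors of (possibly non-compact) random dynamical systems, as in \cite{CF,Wang_2009}: a continuous random dynamical system possessing a $\mathfrak{DK}$-pullback absorbing set and being $\mathfrak{DK}$-pullback asymptotically compact admits a unique random attractor. Since the cocycle property and measurability of $\Psi_{\varsigma}$ (\ie that $(\Psi_{\varsigma},\vartheta)$ is a continuous random dynamical system) have already been recorded above, it suffices to produce a tempered absorbing set and to verify asymptotic compactness, after which the attractor is obtained as the usual pullback omega-limit set of the absorbing family.

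\textbf{Tempered absorbing set.} I would carry out all estimates on the transformed (pathwise) equation \eqref{CTGF}, transferring the conclusions back to $\v_{\varsigma}$ through the Doss--Sussmann relation \eqref{D-S_trans}. Taking the $\H$-inner product of $\eqref{CTGF}_1$ with $\y^{\varsigma}$, writing $\y^{\varsigma}=(\y^{\varsigma}+\varsigma\z)-\varsigma\z$ and using the cancellation \eqref{b0} together with the coercivity of the combined operator $\mathscr{G}$ from \eqref{opr-G} (so that testing against $\y^{\varsigma}+\varsigma\z$ controls $\nu\|\y^{\varsigma}+\varsigma\z\|_{\V}^2+\tfrac{\beta}{2}\|\Arm(\y^{\varsigma}+\varsigma\z)\|_4^4$ up to the indefinite $\J$-contribution, which is absorbed exactly as in \eqref{Sing-2} and Remark \ref{J=0_d=2}), one isolates remainder terms that depend only on $\varsigma\z$ and $\f$. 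These remainders are bounded by Hölder's inequality together with the Sobolev and Korn inequalities \eqref{Sobolev-embedding3}--\eqref{Sobolev+Korn} and then absorbed via Young's inequality, which---after invoking the Poincaré inequality \eqref{Poin}---yields a differential inequality of the form
$$
\frac{\d}{\d t}\|\y^{\varsigma}(t)\|_2^2+\nu^{\ast}\lambda\,\|\y^{\varsigma}(t)\|_2^2\le Q\big(\|\z(\omega,t)\|_2,\,\|\z(\omega,t)\|_{\mathbb{W}^{1,4}},\,\|\f\|_{\H^{-1}}\big),
$$
with $Q$ polynomial in the displayed quantities. Applying the variation-of-constants formula, replacing $\omega$ by $\vartheta_{-t}\omega$ and letting $t\to\infty$, the exponential weight $e^{-\nu^{\ast}\lambda t}$ tames the forcing through the integrability furnished by Lemma \ref{Bddns5}, while Lemma \ref{Bddns4} kills the initial-data contribution. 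This produces a closed random ball in $\H$ whose radius function belongs to the class $\mathfrak{K}$ of Definition \ref{RA2}; translating back to $\v_{\varsigma}$ by \eqref{D-S_trans} and using Lemma \ref{Bddns4} once more gives a $\mathfrak{DK}$-pullback absorbing set for $\Psi_{\varsigma}$.

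\textbf{Asymptotic compactness.} This is where I expect the main difficulty, because Hypothesis \ref{assumpO} permits $\mathfrak{D}$ to be unbounded, so that $\V\hookrightarrow\H$ fails to be compact and no compactness can be extracted from the absorbing-set bounds directly. I would establish $\mathfrak{DK}$-pullback asymptotic compactness by the energy-equation (Ball--Rosa) method: for $D\in\mathfrak{DK}$, a sequence $t_n\to\infty$ and $\x_n\in D(\vartheta_{-t_n}\omega)$, the uniform bounds from the previous step yield, along a subsequence, weak convergence of $\Psi_{\varsigma}(t_n,\vartheta_{-t_n}\omega,\x_n)$ in $\H$ to some limit; one then shows convergence of the $\H$-norms by evaluating the energy identity for $\y^{\varsigma}$ over finite time windows and combining the weak continuity of the solution map with the (almost everywhere) continuity of the energy, thereby upgrading weak convergence to strong convergence in $\H$. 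When $\mathfrak{D}$ is bounded the same conclusion is immediate from the compact embedding $\V\hookrightarrow\H$, so only the unbounded case genuinely requires the energy-equation argument.

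With a $\mathfrak{DK}$-pullback absorbing set and $\mathfrak{DK}$-pullback asymptotic compactness established, the abstract theorem of \cite{CF,Wang_2009} guarantees the existence of a unique random attractor $\mathscr{A}_{\varsigma}(\omega)$, realized as the pullback omega-limit set of the absorbing family and satisfying $\mathscr{A}_{\varsigma}\in\mathfrak{DK}$; this is precisely the content of the theorem, which also matches the result established in \cite{Kinra+Cipriano_STGF}. All the analytic inputs---coercivity (Lemma \ref{lem-Loc-Monotinicity}), the Sobolev--Korn bounds \eqref{Sobolev-embedding3}--\eqref{Sobolev+Korn}, and the Ornstein--Uhlenbeck estimates (Lemmas \ref{Bddns4}, \ref{Bddns5})---are already available, so the only conceptually delicate point is the asymptotic compactness on unbounded Poincaré domains.
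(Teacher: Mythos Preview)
The paper does not supply its own proof of this theorem; the sentence preceding the statement explicitly attributes the result to \cite{Kinra+Cipriano_STGF}, and no proof environment follows. Your outline---a $\mathfrak{DK}$-tempered absorbing set obtained from the energy inequality \eqref{Energy_esti1} (which is in fact quoted verbatim from that reference as Lemma \ref{RA1}), asymptotic compactness via the energy-equation method on unbounded Poincar\'e domains, and an appeal to the abstract criterion in \cite{CF,Wang_2009}---is exactly the scheme carried out in \cite{Kinra+Cipriano_STGF}, and you correctly flag this at the end of your proposal. There is nothing to compare against in the present paper beyond the citation, and your sketch matches the cited source.
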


%
%
%
%

		\subsection{Perturbation radius of the singleton attractor}
		Let us now consider the difference $\w_{\varsigma}(\cdot)=\y^{\varsigma}(\cdot)-\v(\cdot)$, where $\y^{\varsigma}(\cdot)$ and $\v(\cdot)$ are the unique weak solutions of the systems \eqref{CTGF} and \eqref{TGF}, respectively. We observe that $\w_{\varsigma}(\cdot)$ satisfies 
		\begin{equation}\label{Diff-TFG_add}
			\left\{
			\begin{aligned}
				\frac{\d\w_{\varsigma}}{\d t}&=-\nu \A\w_{\varsigma}-\B(\y^{\varsigma}+\varsigma\z) -\alpha\J(\y^{\varsigma} + {\varsigma} \z)-\beta\K(\y^{\varsigma} + {\varsigma}\z) + \varsigma \z   +\B(\v) +\alpha\J(\v) +\beta\K(\v)  , \\ 
				\w_{\varsigma}(0)&=-\varsigma\z(\omega,0),
			\end{aligned}
			\right.
		\end{equation}
		in $\X'$.

		The following lemma yields useful estimates for the weak solution of system \eqref{CTGF}, to be used later on.

		\begin{lemma}[{\cite[Lemma 4.1]{Kinra+Cipriano_STGF}}]\label{RA1}
		Assume that \eqref{third-grade-paremeters-res} holds and Hypotheses \ref{assumpO} and \ref{assump1} are satisfied. Suppose that $\y^{\varsigma}$ solves   system \eqref{CTGF} on the time interval $[a, \infty)$ with $\z \in  \mathrm{L}^2_{\mathrm{loc}}(\R; \H) \cap  \mathrm{L}^{4}_{\mathrm{loc}}(\R; \mathbb{W}^{1,4}(\mathfrak{D}))$ and $\f\in \H^{-1}(\mathfrak{D})$. Then, we have
		\begin{align}\label{Energy_esti1}
		&	\|\y^{\varsigma}(t)\|^2_{2} 
		\nonumber\\	& \leq 
			\|\y^{\varsigma}(s)\|^2_{2}\  e^{-\nu\lambda\left(1+ \frac{\varepsilon_0}{2}\right)(t-s)}  
			  + C\int_{s}^{t} \bigg[\|\z(\omega,\tau)\|^{2}_{2}+ \|\z(\omega,\tau)\|^{4}_{\mathbb{W}^{1,4}} + \|\f\|^2_{\H^{-1}} \bigg] e^{-\nu\lambda\left(1+ \frac{\varepsilon_0}{2}\right)(t-\tau)} \d \tau.
		\end{align}
	for any $t\geq \tau \geq a$.
	\end{lemma}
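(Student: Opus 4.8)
The plan is to derive, for each fixed $\omega$, a differential inequality of the form
\begin{align*}
\frac{\d}{\d t}\|\y^{\varsigma}(t)\|_2^2 + \nu\lambda\Big(1+\tfrac{\varepsilon_0}{2}\Big)\|\y^{\varsigma}(t)\|_2^2 \leq C\Big[\|\z(\omega,t)\|_2^2 + \|\z(\omega,t)\|_{\mathbb{W}^{1,4}}^4 + \|\f\|_{\H^{-1}}^2\Big],
\end{align*}
and then to integrate it from $s$ to $t$ by the variation-of-constants formula, which produces exactly the claimed bound. First I would test equation \eqref{CTGF} with $\y^{\varsigma}$ in $\H$. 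Using $\langle\A\y^{\varsigma},\y^{\varsigma}\rangle=\|\nabla\y^{\varsigma}\|_2^2$ and recalling that in the derivation of \eqref{CTGF} the term $\nu\A\z$ was already cancelled against the Ornstein--Uhlenbeck equation (so that the only linear source surviving is the zeroth-order term $\varsigma\z$), this yields an energy identity in which the three nonlinear operators are evaluated at the combined field $\y^{\varsigma}+\varsigma\z$ and tested against $\y^{\varsigma}=(\y^{\varsigma}+\varsigma\z)-\varsigma\z$.

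The next step is to extract dissipation by the same algebra that underlies Lemma \ref{lem-Loc-Monotinicity}. I would use the structural identities $\langle\B(\y^{\varsigma}+\varsigma\z),\y^{\varsigma}+\varsigma\z\rangle=0$ and $\langle\K(\y^{\varsigma}+\varsigma\z),\y^{\varsigma}+\varsigma\z\rangle=\frac12\|\Arm(\y^{\varsigma}+\varsigma\z)\|_4^4$, together with the $d$-dependent bound on $\alpha\langle\J(\cdot),\cdot\rangle$ recorded in \eqref{Sing-2} (which vanishes for $d=2$ by Remark \ref{J=0_d=2}). Combined as in the proof of \eqref{CL1}, these leave coercive contributions of the type $\tfrac{\nu\varepsilon_0}{4}\|\Arm(\y^{\varsigma})\|_2^2+\tfrac{\beta\varepsilon_0}{4}\|\Arm(\y^{\varsigma})\|_4^4$ on the dissipative side, at the cost of cross-terms obtained by replacing one copy of $\y^{\varsigma}+\varsigma\z$ by $\varsigma\z$.

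The heart of the matter, and the main obstacle, is to bound these cross-terms so that every factor of $\y^{\varsigma}$ carries a gradient norm absorbable into the dissipation while the remaining factors depend only on $\z$ (in $\L^2$ and $\mathbb{W}^{1,4}$) and on $\f$. The delicate one is the convective cross-term $\varsigma\,b(\y^{\varsigma}+\varsigma\z,\z,\y^{\varsigma}+\varsigma\z)$ coming from $\B$, whose worst piece $\varsigma\,b(\y^{\varsigma},\z,\y^{\varsigma})$ is quadratic in $\y^{\varsigma}$: a careless bound attaches a $\z$-dependent coefficient to $\|\y^{\varsigma}\|_2^2$ and destroys the constant Gronwall rate (this is precisely the $\|\Arm(\cdot)\|_4^2\|\cdot\|_2^2$ term visible in \eqref{CL1}). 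The way around it is to estimate one copy of $\y^{\varsigma}$ in $\L^\infty$ through the Sobolev--Korn inequality \eqref{Sobolev+Korn}, $\|\y^{\varsigma}\|_\infty\leq\mathrm{M}_d\|\Arm(\y^{\varsigma})\|_4$, to pair the $\z$-factor in $\L^2$ or $\mathbb{W}^{1,4}$, and then to apply Young's inequality so that the \emph{quartic} dissipation $\|\Arm(\y^{\varsigma})\|_4^4$ supplied by the third-grade term $\K$ absorbs the $\y^{\varsigma}$-contributions; this strong quartic dissipation, absent in the Navier--Stokes setting, is what makes a constant-coefficient rate possible rather than an exponential-of-an-integral factor. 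The remaining cross-terms from $\alpha\J$ and $\beta\K$ tested against $\varsigma\z$ are treated by H\"older in $\mathbb{L}^4$ and Young against $\tfrac{\beta}{2}\|\Arm(\y^{\varsigma}+\varsigma\z)\|_4^4$, and the linear terms $\varsigma(\z,\y^{\varsigma})$ and $\langle\f,\y^{\varsigma}\rangle$ by Cauchy--Schwarz, \eqref{Poin}, \eqref{Korn-ineq} and Young.

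Finally I would convert the surviving coercive quadratic term into $\nu\lambda\big(1+\tfrac{\varepsilon_0}{2}\big)\|\y^{\varsigma}\|_2^2$ via Poincar\'e \eqref{Poin} and Korn \eqref{Korn-ineq}: the factor $\nu\lambda$ arises from the viscous dissipation $\nu\|\nabla\y^{\varsigma}\|_2^2$, while the surplus $\tfrac{\nu\varepsilon_0\lambda}{2}$ is retained from a fraction of the monotone coercive gain. This gives the differential inequality above, and integrating it by the variation-of-constants formula yields the asserted estimate; local-in-time integrability of the $\z$-source is guaranteed by $\z\in\mathrm{L}^2_{\mathrm{loc}}(\R;\H)\cap\mathrm{L}^4_{\mathrm{loc}}(\R;\mathbb{W}^{1,4}(\mathfrak{D}))$.
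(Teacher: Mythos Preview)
The paper does not supply its own proof of this lemma; it is quoted from \cite[Lemma~4.1]{Kinra+Cipriano_STGF}. Your outline is the natural energy argument and matches what the cited reference does: test \eqref{CTGF} with $\y^{\varsigma}$, write the nonlinear terms at $\y^{\varsigma}+\varsigma\z$ against $\y^{\varsigma}=(\y^{\varsigma}+\varsigma\z)-\varsigma\z$, use the $\J/\K$ algebra as in \eqref{Sing-2} and \eqref{Radius-10}--\eqref{Radius-12} to retain a coercive piece of order $\nu(1+\varepsilon_0)\|\nabla\y^{\varsigma}\|_2^2+\beta\varepsilon_0\|\Arm(\y^{\varsigma})\|_4^4$, absorb the $\z$--cross-terms from $\B,\J,\K$ and the source $\varsigma\z+\f$ by H\"older, \eqref{Korn-ineq}, \eqref{Sobolev+Korn} and Young into the quartic dissipation, and close with Poincar\'e \eqref{Poin} and the variation-of-constants formula.

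One small refinement of your sketch: for the worst convective cross-term the workable bound is
\[
|\varsigma\,b(\y^{\varsigma},\z,\y^{\varsigma})|=|\varsigma\,b(\y^{\varsigma},\y^{\varsigma},\z)|\leq C\varsigma\,\|\y^{\varsigma}\|_2\|\nabla\y^{\varsigma}\|_4\|\z\|_4\leq \epsilon\|\Arm(\y^{\varsigma})\|_4^4+\epsilon'\|\y^{\varsigma}\|_2^2+C\varsigma^4\|\z\|_4^4,
\]
which does leave a small $\epsilon'\|\y^{\varsigma}\|_2^2$ on the right-hand side rather than being absorbed by $\|\Arm(\y^{\varsigma})\|_4^4$ alone. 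Choosing $\epsilon'$ of order $\tfrac{\nu\varepsilon_0\lambda}{2}$ is exactly what yields the stated rate $\nu\lambda\big(1+\tfrac{\varepsilon_0}{2}\big)$ instead of the full $\nu\lambda(1+\varepsilon_0)$ available from the linear-plus-monotone part. With that adjustment your argument goes through.
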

		
		We next state and prove a lemma that provides useful estimates for the weak solution of system \eqref{Diff-TFG_add}, which will play an important role in establishing the main result of this section, that is, the rate of convergence of random attractors $\mathscr{A}_{\varsigma}(\omega)$ towards deterministic singleton attractor.

		\begin{lemma}\label{PertRad-add}
			Assume that $\f\in \H^{-1}(\mathfrak{D})$, the condition \eqref{third-grade-paremeters-res} holds, and Hypotheses \eqref{assumpO}, \eqref{hypo-small-force} and \eqref{assump1} are fulfilled. Suppose that $\y^{\varsigma}(\cdot)$ and $\v(\cdot)$ are the unique solutions of \eqref{CTGF} and \eqref{TGF} corresponding to the initial data $\y^{\varsigma}(\omega,0)\in \mathfrak{DK}$ and $\x\in\H$, respectively. Then, for each $\omega\in\Omega$, there exists a random variable $\upgamma(\omega)$ independent of $\varsigma$ such that the $\w_{\varsigma}(\cdot)$ satisfies
			\begin{align}\label{Diff_rad}
				\limsup_{t\to \infty} \|\w_{\varsigma}(t,\vartheta_{-t}\omega;0,\w_{\varsigma}(\vartheta_{-t}\omega,0))\|^2_{2}\ \leq\ \varsigma^{\frac{4}{3}} \upgamma(\omega), \  \text{ for all }  \ \varsigma\in(0,1].
			\end{align}
		\end{lemma}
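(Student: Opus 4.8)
The plan is to test the equation \eqref{Diff-TFG_add} for $\w_{\varsigma}=\y^{\varsigma}-\v$ against $\w_{\varsigma}$ in $\H$, reorganise the result so that the \emph{full} monotone operator $\mathscr{G}$ of Lemma \ref{lem-Loc-Monotinicity} appears, extract the exponential decay rate from the smallness condition \eqref{C_2}, and then balance every $\varsigma$-dependent source against the quartic dissipation generated by the third-grade term $\K$. The exponent $\tfrac43$ is forced by this last step, being the H\"older conjugate of $4$.

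Writing $\v_{\varsigma}:=\y^{\varsigma}+\varsigma\z$, so that $\v_{\varsigma}-\v=\w_{\varsigma}+\varsigma\z$, and using $\mathscr{G}=\nu\A+\B+\alpha\J+\beta\K$ together with $\langle\A\u,\u'\rangle=(\nabla\u,\nabla\u')$, the Laplacian contributions recombine and a direct computation gives
\begin{align*}
	\frac12\frac{\d}{\d t}\|\w_{\varsigma}\|_2^2 + \langle\mathscr{G}(\v_{\varsigma})-\mathscr{G}(\v),\v_{\varsigma}-\v\rangle = \varsigma\langle\mathscr{G}(\v_{\varsigma})-\mathscr{G}(\v),\z\rangle + \nu\varsigma(\nabla\z,\nabla\w_{\varsigma}) + \varsigma(\z,\w_{\varsigma}).
\end{align*}
To the second term on the left I apply Lemma \ref{lem-Loc-Monotinicity} with $\v_1=\v_{\varsigma}$, $\v_2=\v$, bounding it below by $\tfrac{\nu\varepsilon_0}{4}\|\Arm(\v_{\varsigma}-\v)\|_2^2+\tfrac{\beta\varepsilon_0}{4}\|\Arm(\v_{\varsigma}-\v)\|_4^4-\tfrac{(\mathrm{M}_d)^2}{4\nu\varepsilon_0}\|\Arm(\v)\|_4^2\|\v_{\varsigma}-\v\|_2^2$. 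The Poincar\'e inequality \eqref{Poin} (together with the identity $\|\Arm(\u)\|_2^2=2\|\nabla\u\|_2^2$ for divergence-free fields) bounds the quadratic dissipation below by $\tfrac{\nu\varepsilon_0\lambda}{2}\|\v_{\varsigma}-\v\|_2^2$, so that (multiplying through by $2$) the left-hand side dominates $G(t)\|\v_{\varsigma}-\v\|_2^2$ plus the quartic dissipation, where $G(t):=\nu\varepsilon_0\lambda-\tfrac{(\mathrm{M}_d)^2}{2\nu\varepsilon_0}\|\Arm(\v(t))\|_4^2$, whose time-average is made positive (at least $\varrho$) precisely by \eqref{C_2} through \eqref{absorb2}.

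The core of the argument is the estimation of the three source terms, all prefixed by $\varsigma$. The two linear terms $\nu\varsigma(\nabla\z,\nabla\w_{\varsigma})$ and $\varsigma(\z,\w_{\varsigma})$ are handled by Young's inequality against the quadratic dissipation and against a fraction of $\varrho\|\w_{\varsigma}\|_2^2$, contributing only $O(\varsigma^2)$. In $\varsigma\langle\mathscr{G}(\v_{\varsigma})-\mathscr{G}(\v),\z\rangle$ I expand $\mathscr{G}$; the $\nu\A$-part is again $O(\varsigma^2)$, while the bilinear $\B$-difference and the cubic $\K$-difference are the dangerous ones. Using the trilinear estimate of Remark \ref{rem-trilinear-ext}, the Korn and Sobolev inequalities \eqref{Korn-ineq}, \eqref{Sobolev+Korn}, and $\v_{\varsigma}-\v=\w_{\varsigma}+\varsigma\z$, each of these reduces to a term that is linear in $\|\Arm(\w_{\varsigma})\|_4$ with prefactor $\varsigma$ times a function of $\v_{\varsigma}$, $\v$ and $\z$. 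Young's inequality with the conjugate pair $(4,\tfrac43)$ then absorbs a small multiple of the quartic dissipation $\|\Arm(\v_{\varsigma}-\v)\|_4^4$ and leaves a remainder $C\varsigma^{4/3}h(\omega,t)$, where $h$ depends only on $\z(\omega,\cdot)$, $\v_{\varsigma}$, $\v$ and $\f$, and \emph{not} on $\varsigma$. Since all subordinate contributions carry powers $\varsigma^2,\varsigma^{8/3},\varsigma^4\le\varsigma^{4/3}$ for $\varsigma\in(0,1]$, I arrive at $\tfrac{\d}{\d t}\|\w_{\varsigma}\|_2^2+G(t)\|\w_{\varsigma}\|_2^2\le\varsigma^{4/3}h(\omega,t)$, the passage from $\|\v_{\varsigma}-\v\|_2^2$ to $\|\w_{\varsigma}\|_2^2$ producing only further $\varsigma$-higher-order corrections that are folded into $h$.

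It remains to integrate. By the integrating-factor form of Gronwall's inequality and \eqref{absorb2}, the coefficient satisfies $\int_s^tG\ge\varrho(t-s)$ for $s$ large, so the propagator decays like $e^{-\varrho(t-s)}$; the a priori bounds of Lemmas \ref{lem-Absorb} and \ref{RA1} guarantee the integrability in time of the factors multiplying $\varsigma^{4/3}$ in $h$. Replacing $\omega$ by $\vartheta_{-t}\omega$ and using the stationarity identities \eqref{stationary} and \eqref{IS}, the initial-datum contribution $\varsigma^2\|\z(\vartheta_{-t}\omega,0)\|_2^2e^{-\int_0^tG}$ vanishes as $t\to\infty$ by the temperedness Lemma \ref{Bddns4}, while the forcing integral converges to $\varsigma^{4/3}\int_{-\infty}^{0}e^{\varrho r}h(\omega,r)\,\d r$ by the integrability Lemma \ref{Bddns5}; this last integral, being independent of $\varsigma$, defines $\upgamma(\omega)$ and yields \eqref{Diff_rad}. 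I expect the main obstacle to be the treatment of the cubic coupling $\varsigma\beta\langle\K(\v_{\varsigma})-\K(\v),\z\rangle$: one must split it so that precisely the quartic dissipation absorbs the $\|\Arm(\w_{\varsigma})\|_4$-factor, while checking that the surviving time integral — which involves $\|\Arm(\v_{\varsigma})\|_4^{8/3}\|\Arm(\z)\|_4^{4/3}$ — stays finite under the exponential weight. It is exactly this quartic structure, absent for the Navier--Stokes equations, that forces the slower rate $\varsigma^{4/3}$.
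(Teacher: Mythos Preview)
Your strategy—reorganise so that the full monotone operator $\mathscr{G}$ acts on the pair $(\v_{\varsigma},\v)$, invoke Lemma~\ref{lem-Loc-Monotinicity}, and absorb the $\varsigma$-sources against the quartic dissipation via the $(4,\tfrac43)$ Young split—is a legitimate and more conceptual route than the paper's. The paper instead keeps the test function $\w_{\varsigma}$ and inserts the \emph{intermediate} state $\v+\varsigma\z$: it writes $\J(\v_{\varsigma})-\J(\v)=[\J(\v_{\varsigma})-\J(\v+\varsigma\z)]+[\J(\v+\varsigma\z)-\J(\v)]$ (likewise for $\K$), then handles the first brackets jointly through the Hamza--Paicu identities to manufacture the dissipation, while the second brackets involve only $\v$ and $\z$. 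The payoff is that the paper's forcing term on the right of \eqref{Radius-14} contains \emph{only} $\|\v\|_2^2$, $\|\Arm(\v)\|_4^4$, $\|\z\|_2^2$, $\|\z\|_{\mathbb W^{1,4}}^4$, so Lemmas~\ref{lem-Absorb} and~\ref{Bddns5} close the argument immediately and $\upgamma(\omega)$ is manifestly $\varsigma$-free.

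In your route, by contrast, the source $\varsigma\langle\mathscr{G}(\v_{\varsigma})-\mathscr{G}(\v),\z\rangle$ leaves $\v_{\varsigma}$ on the right-hand side; after Young you face terms like $\varsigma^{4/3}\|\Arm(\v_{\varsigma})\|_4^{8/3}\|\Arm(\z)\|_4^{4/3}$. Your claim that $h$ ``depends on $\v_{\varsigma}$ \ldots\ and not on $\varsigma$'' is self-contradictory: $\v_{\varsigma}$ is $\varsigma$-dependent, and Lemma~\ref{RA1} as stated gives only $\|\y^{\varsigma}\|_2^2$, not weighted time-integrals of $\|\Arm(\y^{\varsigma})\|_4$. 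To finish you must supply a uniform-in-$\varsigma$ analogue of \eqref{absorb1} for the random system (this is obtainable from the energy identity for \eqref{CTGF}, but it is an extra ingredient the paper deliberately sidesteps). A second, smaller wrinkle: Lemma~\ref{lem-Loc-Monotinicity} is stated for $\v_1,\v_2\in\X$, and $\v_{\varsigma}=\y^{\varsigma}+\varsigma\z$ need not have zero trace since $\z\in\H\cap\mathbb W^{1,4}$; you should check that the Korn/Sobolev inequalities \eqref{Korn-ineq}--\eqref{Sobolev+Korn} are only applied to $\v_2=\v$ and to quantities with zero trace, or rework the estimate accordingly. Finally, the passage from $G(t)\|\v_{\varsigma}-\v\|_2^2$ back to $G(t)\|\w_{\varsigma}\|_2^2$ is not innocuous because $G(t)$ changes sign; the cross term $2\varsigma G(t)(\w_{\varsigma},\z)$ produces an extra $\epsilon\,\tfrac{(\mathrm M_d)^2}{2\nu\varepsilon_0}\|\Arm(\v)\|_4^2\|\w_{\varsigma}\|_2^2$ that slightly tightens the smallness margin in \eqref{C_2}—harmless since $\varrho>0$ strictly, but worth stating.
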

	\begin{proof}
	Firstly, let us choose $\omega\in\Omega$ and fix it. Secondly, note that $\varepsilon_0 = 1-\sqrt{\frac{\alpha^2}{2\beta\nu}}$ implies $\frac{\alpha^2}{4\nu(1-\varepsilon_0)} = \frac{\beta(1-\varepsilon_0)}{2}$. 	Next, taking the inner product with $\w_{\varsigma}$ to the equation $\eqref{Diff-TFG_add}_1$ and using \eqref{b0}, we have
		\begin{align}\label{Radius-1}
		&	\frac{1}{2}\frac{\d}{\d t} \|\w_{\varsigma}(t)\|^2_2 
		\nonumber\\ & = \langle-\nu \A\w_{\varsigma}(t)-\B(\y^{\varsigma}(t)+\varsigma\z(\omega,t)) -\alpha\J(\y^{\varsigma}(t) + {\varsigma} \z(\omega,t))-\beta\K(\y^{\varsigma}(t) + {\varsigma}\z(\omega,t)) + \varsigma \z(\omega,t) , \w_{\varsigma}(t)  \rangle  
		\nonumber\\ & \quad   +  \langle \B(\v(t)) +\alpha\J(\v(t)) +\beta\K(\v(t)) , \w_{\varsigma}(t)  \rangle  
		\nonumber\\ & = - \frac{\nu}{2}\|\Arm(\w_{\varsigma}(t))\|_2^2 - \underbrace{\varsigma b(\w_{\varsigma}(t),\w_{\varsigma}(t) , \z(\omega,t))}_{:= \wi\P_1(t)} - \underbrace{\varsigma^2 b(\z(\omega,t), \w_{\varsigma}(t)  ,\z(\omega,t))}_{:= \wi\P_2(t)} -  \underbrace{\varsigma b(\v(t),\w_{\varsigma}(t) ,\z(\omega,t)) }_{:= \wi\P_3(t)}
		 \nonumber\\ & \quad -   \underbrace{b(\w_{\varsigma}(t), \w_{\varsigma}(t), \v(t))}_{:= \wi\P_4(t)} 
		  - \underbrace{\varsigma b(\z(\omega,t),\w_{\varsigma}(t) ,\v(t)) }_{:= \wi\P_5(t)}
		 - \underbrace{\alpha \langle\J(\y^{\varsigma}(t)+\varsigma\z(\omega,t)) - \J(\v(t)+\varsigma\z(\omega,t)), \w_{\varsigma}(t)  \rangle }_{:= \wi\P_6(t)} \nonumber\\ & \quad - \underbrace{\alpha \langle\J(\v(t)+\varsigma\z(\omega,t)) - \J(\v(t)), \w_{\varsigma}(t)  \rangle }_{:= \wi\P_7(t)}
	-  \underbrace{\beta \langle\K(\y^{\varsigma}(t)+\varsigma\z(\omega,t)) - \K(\v(t)+\varsigma\z(\omega,t)), \w_{\varsigma}(t)  \rangle}_{:= \wi\P_8(t)}  
		\nonumber\\ & \quad	 -  \underbrace{\beta \langle\K(\v(t)+\varsigma\z(\omega,t)) - \K(\v(t)), \w_{\varsigma}(t)  \rangle}_{:= \wi\P_9(t)} + \underbrace{\varsigma(\z(\omega,t),\w_{\varsigma}(t))}_{:=\wi\P_{10}(t)} .
		\end{align}
	Using Cauchy-Schwarz, H\"older's and Young  inequalities, \eqref{Korn-ineq} and \eqref{Sobolev+Korn}, we get
	\begin{align}
		|\wi\P_1|& \leq  \varsigma \|\w_{\varsigma}\|_{2}\|\nabla\w_{\varsigma}\|_4\|\z\|_{4} \leq  \varsigma C \|\w_{\varsigma}\|_{2}\|\Arm(\w_{\varsigma})\|_4\|\z\|_{4}   
		\nonumber\\ & \leq    \frac{\varrho}{12} \|\w_{\varsigma}\|^2_{2} + \frac{\beta\varepsilon_0}{40}\|\Arm(\w_{\varsigma})\|^4_4  + \varsigma^4 C \|\z\|^4_{4},  \label{Radius-2}  \\
		|\wi\P_2|& \leq  \varsigma^2 \|\z\|_{2}\|\nabla\w_{\varsigma}\|_4\|\z\|_{4} \leq  \varsigma^2 C \|\z\|_{2}\|\Arm(\w_{\varsigma})\|_4\|\z\|_{4}   
		\nonumber\\ & \leq  \frac{\beta\varepsilon_0}{40}  \|\Arm(\w_{\varsigma})\|^4_4 + \varsigma^{\frac83} C[\|\z\|^2_{2} + \|\z\|^4_{4}], \label{Radius-3}  \\
		|\wi\P_3|& \leq  \varsigma \|\v\|_{2}\|\nabla\w_{\varsigma}\|_4\|\z\|_{4} \leq  \varsigma C \|\v\|_{2}\|\Arm(\w_{\varsigma})\|_4\|\z\|_{4}   
		\nonumber\\ & \leq   \frac{\beta\varepsilon_0}{40}\|\Arm(\w_{\varsigma})\|^4_4  +   \varsigma^{\frac43} C [\|\v\|^2_{2}  + \|\z\|^4_{4}], \label{Radius-4} \\
		|\wi\P_4|& \leq  \varsigma \|\w_{\varsigma}\|_{2}\|\nabla\w_{\varsigma}\|_2\|\v\|_{\infty} \leq   \frac{\mathrm{M}_d}{2} \|\w_{\varsigma}\|_{2}\|\Arm(\w_{\varsigma})\|_2\|\Arm(\v)\|_{4}   
		\nonumber\\ & \leq    \frac{(\mathrm{M}_d)^2}{4\nu\varepsilon_0}\|\Arm(\v)\|_{4}^2  \|\w_{\varsigma}\|^2_2  +  \frac{\nu\varepsilon_0}{4}\|\Arm(\w_{\varsigma})\|^2_{2},  \label{Radius-5} \\
		|\wi\P_5|& \leq  \varsigma \|\v\|_{2}\|\nabla\w_{\varsigma}\|_4\|\z\|_{4} \leq  \varsigma C \|\v\|_{2}\|\Arm(\w_{\varsigma})\|_4\|\z\|_{4}   
		\nonumber\\ & \leq    \frac{\beta\varepsilon_0}{40}\|\Arm(\w_{\varsigma})\|^4_4 +  \varsigma^{\frac43} C[\|\v\|^2_{2}   +  \|\z\|^4_{4}], \label{Radius-6}  \\
		|\wi\P_7| & \leq \frac{|\alpha|\varsigma}{2} | \langle \Arm(\z)\Arm(\v) + \Arm(\v)\Arm(\z) , \Arm(\w_{\varsigma}) \rangle| + \frac{|\alpha|\varsigma^2}{2} |\langle \Arm^2(\z) , \Arm(\w_{\varsigma}) \rangle |
		\nonumber\\ & 
		\leq \varsigma C \| \Arm(\z)\|_4 \|\Arm(\v)\|_4 \|\Arm(\w_{\varsigma})\|_2  + \varsigma^2 C \| \Arm(\z)\|_4^2 \| \Arm(\w_{\varsigma})\|_2
		\nonumber\\ & 
		\leq  \frac{\varrho}{24 \lambda} \|\Arm(\w_{\varsigma})\|^2_2  + \varsigma^2 C\left[ \| \Arm(\z)\|_4^4 +  \| \Arm(\v)\|_4^4\right],  \label{Radius-7}  \\
		|\wi\P_9| & \leq \frac{\beta\varsigma^3}{2} | \langle |\Arm(\z)|^2\Arm(\z) , \Arm(\w_{\varsigma}) \rangle|  + \frac{\beta\varsigma^2}{2} |\langle |\Arm(\z)|^2  \Arm(\v) + 2[\Arm(\z):\Arm(\v)]\Arm(\z) , \Arm(\w_{\varsigma}) \rangle|  
		\nonumber\\ & \quad +  \frac{\beta\varsigma}{2} |\langle 2[ \Arm(\z):\Arm(\v)] \Arm(\v) + |\Arm(\v)|^2\Arm(\z) , \Arm(\w_{\varsigma}) \rangle |
		\nonumber\\ & 
		\leq \frac{\beta\varsigma^3}{2}  \|\Arm(\z)\|^3_{4} \|\Arm(\w_{\varsigma})\|_4   + \frac{3\beta\varsigma^2}{2} \|\Arm(\z)\|^2_4  \|\Arm(\v)\|_4  \|\Arm(\w_{\varsigma})\|_4 + \frac{ 3 \beta\varsigma}{2} \|\Arm(\v)\|^2_4\|\Arm(\z)\|_4 \|\Arm(\w_{\varsigma})\|_4
		\nonumber\\ & 
		\leq \frac{\beta\varepsilon_0}{40} \|\Arm(\w_{\varsigma})\|^4_4  +  \varsigma^{\frac43} C\left[ \| \Arm(\z)\|_4^4 +  \| \Arm(\v)\|_4^4\right],  \label{Radius-8}  \\
		|\wi\P_{10}|& \leq \varsigma \|\z\|_2 \|\w_{\varsigma}\|_2\leq \varsigma^2C \|\z\|^2_{2} + \frac{\varrho}{12} \|\w_{\varsigma}\|^2_2   \label{Radius-9}.
	\end{align}
	From \cite[Equation (2.21)]{Hamza+Paicu_2007}, we have 
	\begin{align}\label{Radius-10}
		 |\wi\P_6| 
		& \leq \frac{\nu(1-\varepsilon_0)}{2}\|\Arm(\w_{\varsigma})\|_{2}^2 + \frac{\alpha^2}{4\nu(1-\varepsilon_0)} \int_{\mathfrak{D}} |\Arm(\w_{\varsigma}(x))|^2 ( |\Arm(\y^{\varsigma}(x)+\varsigma\z(x))|^2+|\Arm(\v(x)+\varsigma\z(x))|^2)\d x
		\nonumber\\ & = \frac{\nu(1-\varepsilon_0)}{2}\|\Arm(\w_{\varsigma})\|_{2}^2 + \frac{\beta(1-\varepsilon_0)}{2}\int_{\mathfrak{D}} |\Arm(\w_{\varsigma}(x))|^2 ( |\Arm(\y^{\varsigma}(x)+\varsigma\z(x))|^2+|\Arm(\v(x)+\varsigma\z(x))|^2)\d x.
	\end{align}
	Again, from \cite[Equation (2.13)]{Hamza+Paicu_2007}, we have 
	\begin{align}\label{Radius-11}
		  \wi\P_8 
		& = \frac{\beta}{2}\int_{\mathfrak{D}}( |\Arm(\y^{\varsigma}(x)+\varsigma\z(x))|^2-|\Arm(\v(x)+\varsigma\z(x))|^2)^2 \d x 
		\nonumber\\ & \quad + \frac{\beta}{2}\int_{\mathfrak{D}} |\Arm(\w_{\varsigma}(x))|^2 ( |\Arm(\y^{\varsigma}(x)+\varsigma\z(x))|^2+|\Arm(\v(x)+\varsigma\z(x))|^2)\d x.
	\end{align}
Also, we have 
\begin{align}\label{Radius-12}
	\frac{1}{2}\|\Arm(\w_{\varsigma})\|_{4}^4 \leq \int_{\mathfrak{D}} |\Arm(\w_{\varsigma}(x))|^2 ( |\Arm(\y^{\varsigma}(x)+\varsigma\z(x))|^2+|\Arm(\v(x)+\varsigma\z(x))|^2)\d x.
\end{align}
Here, we combine \eqref{Radius-1}-\eqref{Radius-12} and obtain
\begin{align}\label{Radius-13}
	&	\frac{1}{2}\frac{\d}{\d t} \|\w_{\varsigma}(t)\|^2_2 + \left(\frac{\nu\varepsilon_0}{4}- \frac{\varrho}{24\lambda}\right)\|\Arm(\w_{\varsigma}(t))\|_2^2 - \left[\frac{(\mathrm{M}_d)^2}{4\nu\varepsilon_0}\|\Arm(\v(t))\|_{4}^2 + \frac{\varrho}{6}\right]  \|\w_{\varsigma}(t)\|^2_2  + \frac{\beta\varepsilon_0}{8}\|\Arm(\w_{\varsigma}(t))\|_4^4 
	\nonumber\\ & \leq \varsigma^{\frac43} C \left[ \|\v(t)\|^2_{2} + \|\Arm(\v(t))\|^4_{4} +\|\z(\omega,t)\|^2_{2} + \|\z(\omega,t)\|^4_{\mathbb{W}^{1,4}}\right].
\end{align}
	Making use of the fact that $\frac{\nu\varepsilon_0}{4} > \frac{\varrho}{24\lambda}$ (see \eqref{C_2} above) and  Poincar\'e inequality \eqref{Poin} in \eqref{Radius-13}, we reach at
	\begin{align}\label{Radius-14}
		&	 \frac{\d}{\d t} \|\w_{\varsigma}(t)\|^2_2 +   \left[\nu\varepsilon_0 \lambda -  \frac{(\mathrm{M}_d)^2}{2\nu\varepsilon_0}\|\Arm(\v(t))\|_{4}^2 - \frac{\varrho}{2}\right]  \|\w_{\varsigma}(t)\|^2_2  + \frac{\beta\varepsilon_0}{8}\|\Arm(\w_{\varsigma}(t))\|_4^4 
		\nonumber\\ & \leq \varsigma^{\frac43} C \left[ \|\v(t)\|^2_{2} + \|\Arm(\v(t))\|^4_{4} +\|\z(\omega,t)\|^2_{2} + \|\z(\omega,t)\|^4_{\mathbb{W}^{1,4}}\right].
	\end{align}
	Since $0<\frac{\varrho}{2} < \nu^*\lambda$ (see \eqref{C_2} and \eqref{nu_ast} above), the time $\mathcal{T}_{\frac{\varrho}{2}}$ is well-defined (see Lemma \ref{lem-Absorb} above) and  we define a time $\mathscr{T}= \max\{\mathcal{T}_{\nu^*\lambda}, \mathcal{T}_{\frac{\varrho}{2}} \}$. Applying the variation of constant formula to \eqref{Radius-14} over time interval $[\mathscr{T}, t]$ with $t>\mathscr{T}+1$, and making use of \eqref{absorb2} and small forcing intensity condition \eqref{C_2}, we achieve
	\begin{align}\label{Radius-15}
	&	\|\w_{\varsigma}(t,\omega;0, \w_{\varsigma}(0))\|_2^2 
	\nonumber\\ & \leq
		e^{-\int_{\mathscr{T}}^{t}\big[\nu\varepsilon_0 \lambda -  \frac{(\mathrm{M}_d)^2}{2\nu\varepsilon_0}\|\Arm(\v(\tau))\|_{4}^2 - \frac{\varrho}{2}\big] \d \tau}\|\w_{\varsigma}(\mathscr{T},\omega;0, \w_{\varsigma}(0))\|_2^2
		\nonumber\\ & \quad + \varsigma^{\frac43}C \int_{\mathscr{T}}^{t}e^{-\int_{s}^{t}\big[\nu\varepsilon_0 \lambda -  \frac{(\mathrm{M}_d)^2}{2\nu\varepsilon_0}\|\Arm(\v(\tau))\|_{4}^2 - \frac{\varrho}{2}\big] \d \tau} \left[ \|\v(s)\|^2_{2} + \|\Arm(\v(s))\|^4_{4} +\|\z(\omega,s)\|^2_{2} + \|\z(\omega,s)\|^4_{\mathbb{W}^{1,4}}\right]\d s
		\nonumber\\ & \leq
		e^{-\int_{\mathscr{T}}^{t}\big[\nu\varepsilon_0 \lambda -  \frac{(\mathrm{M}_d)^2}{2\nu\varepsilon_0}\|\Arm(\v(\tau))\|_{4}^2 - \frac{\varrho}{2}\big] \d \tau}\|\w_{\varsigma}(\mathscr{T},\omega; 0 , \w_{\varsigma}(0))\|_2^2
		 + \varsigma^{\frac43}C e^{\int_{t}^{t+1}\big[\nu\varepsilon_0 \lambda -  \frac{(\mathrm{M}_d)^2}{2\nu\varepsilon_0}\|\Arm(\v(\tau))\|_{4}^2 - \frac{\varrho}{2}\big] \d \tau} 
		 \nonumber\\ & \quad \times \int_{\mathscr{T}}^{t}e^{-\int_{s}^{t+1}\big[\nu\varepsilon_0 \lambda -  \frac{(\mathrm{M}_d)^2}{2\nu\varepsilon_0}\|\Arm(\v(\tau))\|_{4}^2 - \frac{\varrho}{2}\big] \d \tau} \left[ \|\v(s)\|^2_{2} + \|\Arm(\v(s))\|^4_{4} +\|\z(\omega,s)\|^2_{2} + \|\z(\omega,s)\|^4_{\mathbb{W}^{1,4}}\right]\d s
		\nonumber\\ & \leq
		e^{-\frac{\varrho}{2}(t-\mathscr{T})}\|\w_{\varsigma}(\mathscr{T},\omega; 0 , \w_{\varsigma}(0))\|_2^2
		+ \varsigma^{\frac43}C e^{\nu\varepsilon_0 \lambda }  \int_{\mathscr{T}}^{t}e^{-(\nu\varepsilon_0\lambda- \frac{\varrho}{2})(t+1-s) + \frac{(\mathrm{M}_d)^2}{2\nu\varepsilon_0} \int_{s}^{t+1}  \|\Arm(\v(\tau))\|_{4}^2 \d \tau}
		\nonumber\\ & \quad  \times \left[ \|\v(s)\|^2_{2} + \|\Arm(\v(s))\|^4_{4} +\|\z(\omega,s)\|^2_{2} + \|\z(\omega,s)\|^4_{\mathbb{W}^{1,4}}\right]\d s
		\nonumber\\ & \leq
		e^{-\frac{\varrho}{2}(t-\mathscr{T})}\|\w_{\varsigma}(\mathscr{T},\omega;0, \w_{\varsigma}(0))\|_2^2
		 + \varsigma^{\frac43}C  \int_{\mathscr{T}}^{t}e^{-(\nu\varepsilon_0\lambda- \frac{\varrho}{2})(t+1-s) + \frac{(\mathrm{M}_d)^2}{2\nu\varepsilon_0} \left( \frac{1}{\beta^*\nu^{*}}\right)^{\frac12} \left( \frac{1}{2} + \frac{1}{\nu^*\lambda} \right)^{\frac12}\|\f\|_{\H^{-1}}(t+1-s) }  
		 \nonumber\\ & \quad \times \left[ \|\v(s)\|^2_{2} + \|\Arm(\v(s))\|^4_{4} +\|\z(\omega,s)\|^2_{2} + \|\z(\omega,s)\|^4_{\mathbb{W}^{1,4}}\right]\d s
		 \nonumber\\ & \leq
		 e^{-\frac{\varrho}{2}(t-\mathscr{T})}\|\w_{\varsigma}(\mathscr{T}, \omega;0, \w_{\varsigma}(0))\|_2^2
		 \nonumber\\ & \quad + \varsigma^{\frac43}C  \int_{\mathscr{T}}^{t}e^{- \frac{\varrho}{2}(t+1-s) }   \left[ \|\v(s)\|^2_{2} + \|\Arm(\v(s))\|^4_{4} +\|\z(\omega,s)\|^2_{2} + \|\z(\omega,s)\|^4_{\mathbb{W}^{1,4}}\right]\d s
		 \nonumber\\ & \leq 
		 e^{-\frac{\varrho}{2}(t-\mathscr{T})}\|\w_{\varsigma}(\mathscr{T}, \omega, \w_{\varsigma}(0))\|_2^2 + \varsigma^{\frac43}C  \int_{\mathscr{T}}^{t}e^{- \frac{\varrho}{2}(t-s) }   \left[ \|\v(s)\|^2_{2} + \|\Arm(\v(s))\|^4_{4} \right]\d s
		 \nonumber\\ & \quad + \varsigma^{\frac43}C   \int_{\mathscr{T}}^{t}e^{- \frac{\varrho}{2}(t-s) }   \left[ \|\z(\omega,s)\|^2_{2} + \|\z(\omega,s)\|^4_{\mathbb{W}^{1,4}}\right]\d s .
	\end{align}
	
	Using \eqref{absorb1}, we note that for all $t>\mathscr{T}+1$
	\begin{align}\label{Radius-16}
		\int_{\mathscr{T}}^{t}e^{- \frac{\varrho}{2}(t-s) }   \left[ \|\v(s)\|^2_{2} + \|\Arm(\v(s))\|^4_{4} \right]\d s  & \leq  \frac{2\|\f\|_{\H^{-1}}^2}{\nu^*\varrho} \int_{\mathscr{T}}^{t}e^{- \frac{\varrho}{2}(t-s) }    \d s +  \frac{2\|\f\|_{\H^{-1}}^2}{\nu^*\varrho}
		\nonumber\\ & \leq  \frac{4\|\f\|_{\H^{-1}}^2}{\nu^*(\varrho)^2} +  \frac{2\|\f\|_{\H^{-1}}^2}{\nu^*\varrho} < + \infty.
	\end{align}
	Also, we have from \eqref{stationary} and Lemma \ref{Bddns5} that for all $t>\mathscr{T}+1$ and for each $\omega\in\Omega$
	\begin{align}\label{Radius-17}
		\int_{\mathscr{T}}^{t}e^{- \frac{\varrho}{2}(t-s) }   \left[ \|\z(\vartheta_{-t}\omega,s)\|^2_{2} + \|\z(\vartheta_{-t}\omega,s)\|^4_{\mathbb{W}^{1,4}}\right]\d s & = \int_{\mathscr{T}}^{t}e^{- \frac{\varrho}{2}(t-s) }   \left[ \|\z(\omega,s-t)\|^2_{2} + \|\z(\omega,s-t)\|^4_{\mathbb{W}^{1,4}}\right]\d s 
		\nonumber\\ &  =  \int_{\mathscr{T}-t}^{0}e^{ \frac{\varrho\tau}{2}}   \left[ \|\z(\omega,\tau)\|^2_{2} + \|\z(\omega,\tau)\|^4_{\mathbb{W}^{1,4}}\right]\d \tau
		\nonumber\\ & \leq \int_{-\infty}^{0}e^{ \frac{\varrho\tau}{2}}   \left[ \|\z(\omega,\tau)\|^2_{2} + \|\z(\omega,\tau)\|^4_{\mathbb{W}^{1,4}}\right]\d \tau < + \infty.
	\end{align}
	
	From Lemmas \ref{lem-Absorb} and \ref{RA1}, \eqref{stationary} and the fact that $\frac{\varrho}{4} < \nu\lambda\left(1+ \frac{\varepsilon_0}{2}\right)$, for $t>\mathscr{T}+1$, we have
	\begin{align}\label{Radius-18}
		& e^{-\frac{\varrho}{2}(t-\mathscr{T})} \|\w_{\varsigma}(\mathscr{T}, \vartheta_{-t}\omega; 0, \w_{\varsigma}(0))\|_2^2 
		\nonumber\\ &\leq 2 e^{-\frac{\varrho}{2}(t-\mathscr{T})} \left[\|\y^{\varsigma}(\mathscr{T}, \vartheta_{-t}\omega;0, \y^{\varsigma}(0))\|_2^2 + \|\v(\mathscr{T})\|_2^2\right]
		\nonumber\\ &\leq 2  e^{-\frac{\varrho}{2}(t-\mathscr{T})} \bigg\{\|\y^{\varsigma}(\vartheta_{-t}\omega, 0)\|^2_{2}\  e^{-\nu\lambda\left(1+ \frac{\varepsilon_0}{2}\right)\mathscr{T}}  
		\nonumber\\ & \quad + C\int_{0}^{\mathscr{T}} \bigg[\|\z(\vartheta_{-t}\omega,\tau)\|^{2}_{2}+ \|\z(\vartheta_{-t}\omega,\tau)\|^{4}_{\mathbb{W}^{1,4}} + \|\f\|^2_{\H^{-1}} \bigg] e^{-\nu\lambda\left(1+ \frac{\varepsilon_0}{2}\right)(\mathscr{T}-\tau)} \d \tau 
		 + \frac{\|\f\|_{\H^{-1}}^2}{(\nu^*)^2\lambda}\bigg\}
		 \nonumber\\ &\leq 2  e^{-\frac{\varrho}{4}(t-\mathscr{T})} \bigg\{\|\y^{\varsigma}(\vartheta_{-t}\omega, 0)\|^2_{2}  e^{-\frac{\varrho}{4}(t-\mathscr{T})} 
		 \nonumber\\ & \quad + C e^{-\frac{\varrho}{4}(t-\mathscr{T})} \int_{0}^{\mathscr{T}} \bigg[\|\z(\omega,\tau-t)\|^{2}_{2}+ \|\z(\omega,\tau-t)\|^{4}_{\mathbb{W}^{1,4}} + \|\f\|^2_{\H^{-1}} \bigg] e^{-\frac{\varrho}{4}(\mathscr{T}-\tau)} \d \tau 
		 + \frac{\|\f\|_{\H^{-1}}^2}{(\nu^*)^2\lambda}\bigg\}
		 \nonumber\\ &\leq 2  e^{-\frac{\varrho}{4}(t-\mathscr{T})} \bigg\{\|\y^{\varsigma}(\vartheta_{-t}\omega, 0)\|^2_{2}   
		 \nonumber\\ & \quad + C  \int_{0}^{\mathscr{T}} \bigg[\|\z(\omega,\tau-t)\|^{2}_{2}+ \|\z(\omega,\tau-t)\|^{4}_{\mathbb{W}^{1,4}} + \|\f\|^2_{\H^{-1}} \bigg] e^{-\frac{\varrho}{4}(t-\tau)} \d \tau 
		 + \frac{\|\f\|_{\H^{-1}}^2}{(\nu^*)^2\lambda}\bigg\}
		 \nonumber\\ &\leq 2  e^{-\frac{\varrho}{4}(t-\mathscr{T})} \bigg\{\|\y^{\varsigma}(\vartheta_{-t}\omega, 0)\|^2_{2}    
		 + C  \int_{-t}^{\mathscr{T}-t} \bigg[\|\z(\omega,s)\|^{2}_{2}+ \|\z(\omega,s)\|^{4}_{\mathbb{W}^{1,4}} + \|\f\|^2_{\H^{-1}} \bigg] e^{\frac{\varrho}{4}s} \d s 
		 + \frac{\|\f\|_{\H^{-1}}^2}{(\nu^*)^2\lambda}\bigg\}
		 \nonumber\\ &\leq 2  e^{-\frac{\varrho}{4}(t-\mathscr{T})} \bigg\{\|\y^{\varsigma}(\vartheta_{-t}\omega, 0)\|^2_{2}   
		 + C  \int_{-\infty}^{0} \bigg[\|\z(\omega,s)\|^{2}_{2}+ \|\z(\omega,s)\|^{4}_{\mathbb{W}^{1,4}} + \|\f\|^2_{\H^{-1}} \bigg] e^{\frac{\varrho}{4}s} \d s 
		 + \frac{\|\f\|_{\H^{-1}}^2}{(\nu^*)^2\lambda}\bigg\}
		 \nonumber\\ & \to 0 \text{	as } t\to\infty,
	\end{align}
where we have used Lemmas \ref{Bddns4} and \ref{Bddns5} and the fact that $ \y^{\varsigma}(\omega,0)\in \mathfrak{DK}$ to pass to the limit as $t\to\infty$. Now, we replace $\omega$ with $\vartheta_{-t}\omega$ in \eqref{Radius-15}, make use of \eqref{Radius-16}-\eqref{Radius-17}, and pass to the upper limit as $t\to\infty$ in view of \eqref{Radius-18} to get
	\begin{align*}
	& \limsup_{t\to \infty}	\|\w_{\varsigma}(t,\vartheta_{-t}\omega; 0 , \w_{\varsigma}(0))\|_2^2 
	  \leq 
	 \varsigma^{\frac43}C \bigg[\frac{4\|\f\|_{\H^{-1}}^2}{\nu^*(\varrho)^2} +  \frac{2\|\f\|_{\H^{-1}}^2}{\nu^*\varrho}  +  \int_{-\infty}^{0}e^{ \frac{\varrho\tau}{2}}   \left[ \|\z(\omega,\tau)\|^2_{2} + \|\z(\omega,\tau)\|^4_{\mathbb{W}^{1,4}}\right]\d \tau\bigg],
\end{align*}
which concludes the proof.
\end{proof}

	\begin{theorem}\label{Conver-add}
		Assume that the condition \eqref{third-grade-paremeters-res} holds, and Hypotheses \eqref{assumpO}, \eqref{hypo-small-force} and \eqref{assump1} are satisfied.  Let $\mathfrak{D}$ denote either a bounded or an unbounded domain, and suppose that $\f \in \H^{-1}(\mathfrak{D})$ in the former case or $\f \in \L^{2}(\mathfrak{D})$ in the latter. Then, there exists a random variable $R(\omega)$ (independent of $\varsigma$) such that for every $\varsigma\in(0,1],$  random attractors $\mathscr{A}_{\varsigma}$ (obtained in Theorem \ref{thm-random-attractor}) and the deterministic attractor $\mathscr{A}=\{\emph{\textbf{a}}_{*}\}$ (obtained in Theorem \ref{D-SA}) satisfy
	\begin{align*}
		\emph{dist}_{\H}(\mathscr{A}_{\varsigma}(\omega),\mathscr{A}) \leq \varsigma^{\frac{2}{3}} R(\omega), \ \text{ for all }\ \omega\in \Omega,
	\end{align*}
	where $\emph{dist}_{\H}(A,B)=\max\{\emph{dist}(A,B),\emph{dist}(B,A)\}$.
\end{theorem}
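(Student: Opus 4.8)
The plan is to exploit crucially that $\mathscr{A}=\{\mathbf{a}_{*}\}$ is a single point, which collapses the symmetric Hausdorff distance to a one-sided quantity. For a singleton one has $\mathrm{dist}(\mathscr{A},\mathscr{A}_{\varsigma}(\omega))=\inf_{y\in\mathscr{A}_{\varsigma}(\omega)}\|\mathbf{a}_{*}-y\|_{2}\leq\sup_{y\in\mathscr{A}_{\varsigma}(\omega)}\|\mathbf{a}_{*}-y\|_{2}=\mathrm{dist}(\mathscr{A}_{\varsigma}(\omega),\mathscr{A})$, so that $\mathrm{dist}_{\H}(\mathscr{A}_{\varsigma}(\omega),\mathscr{A})=\sup_{a\in\mathscr{A}_{\varsigma}(\omega)}\|a-\mathbf{a}_{*}\|_{2}$. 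Hence it suffices to produce a $\varsigma$-independent random variable $R(\omega)$ with $\|a-\mathbf{a}_{*}\|_{2}\leq\varsigma^{2/3}R(\omega)$ for every $a\in\mathscr{A}_{\varsigma}(\omega)$, i.e. a quantitative upper semicontinuity estimate.

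First I would fix $\omega\in\Omega$ and $a\in\mathscr{A}_{\varsigma}(\omega)$, and invoke the invariance of the random attractor of the random dynamical system $\Psi_{\varsigma}$, namely $\mathscr{A}_{\varsigma}(\omega)=\Psi_{\varsigma}(t,\vartheta_{-t}\omega,\mathscr{A}_{\varsigma}(\vartheta_{-t}\omega))$ for each $t>0$; this provides points $b_{t}\in\mathscr{A}_{\varsigma}(\vartheta_{-t}\omega)$ with $a=\Psi_{\varsigma}(t,\vartheta_{-t}\omega,b_{t})$. Expanding $\Psi_{\varsigma}$ through \eqref{combine_sol} and using the stationarity identity \eqref{stationary} in the form $\z(\vartheta_{-t}\omega,t)=\z(\omega,0)$, I obtain $a=\y^{\varsigma}(t,\vartheta_{-t}\omega;0,b_{t}-\varsigma\z(\vartheta_{-t}\omega,0))+\varsigma\z(\omega,0)$. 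Since the deterministic attractor $\{\mathbf{a}_{*}\}$ is invariant under the semigroup $\S(\cdot)$, the solution $\v$ of \eqref{TGF} started at $\mathbf{a}_{*}$ stays at $\mathbf{a}_{*}$; writing $\w_{\varsigma}=\y^{\varsigma}-\v$ with this choice yields the exact identity $a-\mathbf{a}_{*}=\w_{\varsigma}(t,\vartheta_{-t}\omega;0,\cdot)+\varsigma\z(\omega,0)$, valid for every $t>0$.

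Next I would apply the triangle inequality, $\|a-\mathbf{a}_{*}\|_{2}\leq\|\w_{\varsigma}(t,\vartheta_{-t}\omega;0,\cdot)\|_{2}+\varsigma\|\z(\omega,0)\|_{2}$, and pass to the limit superior as $t\to\infty$. The left-hand side does not depend on $t$, while the first term on the right is controlled by Lemma \ref{PertRad-add}, which gives $\limsup_{t\to\infty}\|\w_{\varsigma}(t,\vartheta_{-t}\omega;0,\cdot)\|_{2}^{2}\leq\varsigma^{4/3}\upgamma(\omega)$ with $\upgamma$ independent of $\varsigma$. Using $\varsigma\leq\varsigma^{2/3}$ for $\varsigma\in(0,1]$, this produces $\|a-\mathbf{a}_{*}\|_{2}\leq\varsigma^{2/3}\big(\sqrt{\upgamma(\omega)}+\|\z(\omega,0)\|_{2}\big)$. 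As this bound is uniform in $a\in\mathscr{A}_{\varsigma}(\omega)$ and in $\varsigma\in(0,1]$, I would set $R(\omega):=\sqrt{\upgamma(\omega)}+\|\z(\omega,0)\|_{2}$ and take the supremum over $a$ to finish.

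The main obstacle is to justify that Lemma \ref{PertRad-add} may be applied with initial data drawn from the moving attractor fibres $\mathscr{A}_{\varsigma}(\vartheta_{-t}\omega)$, which depend on $t$ and not merely on $\vartheta_{-t}\omega$ through a single fixed random variable. The key observation is that its proof uses only the temperedness (sub-exponential pullback growth) of the initial-data norm: here $\|b_{t}-\varsigma\z(\vartheta_{-t}\omega,0)\|_{2}\leq\kappa(\mathscr{A}_{\varsigma}(\vartheta_{-t}\omega))+\varsigma\|\z(\vartheta_{-t}\omega,0)\|_{2}$, and since $\mathscr{A}_{\varsigma}\in\mathfrak{DK}$ the radius $\kappa(\mathscr{A}_{\varsigma}(\cdot))$ lies in the class $\mathfrak{K}$, so that, together with Lemmas \ref{Bddns4} and \ref{Bddns5}, the initial-data term in \eqref{Radius-18} vanishes in the pullback limit irrespective of the particular selection $b_{t}$. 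Only the $\varsigma^{4/3}$ forcing contribution survives, which is precisely what makes $R(\omega)$ free of $\varsigma$. A minor point to verify along the way is the strict invariance $\S(t)\mathbf{a}_{*}=\mathbf{a}_{*}$ used above, which follows from Theorem \ref{D-SA}.
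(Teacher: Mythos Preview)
Your proposal is correct and follows essentially the same route as the paper: invoke invariance of $\mathscr{A}_{\varsigma}$ to write an arbitrary $a\in\mathscr{A}_{\varsigma}(\omega)$ as a pullback solution, split via $\v_{\varsigma}=\y^{\varsigma}+\varsigma\z$, compare with the stationary deterministic solution $\v(t,\mathbf{a}_{*})\equiv\mathbf{a}_{*}$, and apply Lemma~\ref{PertRad-add} together with $\varsigma\leq\varsigma^{2/3}$. Your treatment is in fact more careful than the paper's in a couple of places: you make explicit why the symmetric Hausdorff distance collapses to $\sup_{a}\|a-\mathbf{a}_{*}\|_{2}$, you correctly write $\sqrt{\upgamma(\omega)}$ (the paper drops the square root), and you address the legitimate concern that the pullback initial data $b_{t}$ varies with $t$, which the paper leaves implicit.
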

\begin{proof}
	Let $\emph{\textbf{a}}_{\varsigma}(\omega)$ be an arbitrary element of $\mathscr{A}_{\varsigma}(\omega)$. Then by the invariance property of random attractor, we have $\emph{\textbf{a}}_{\varsigma}(\omega)=\v_{\varsigma}(t,\vartheta_{-t}\omega; 0 ,\emph{\textbf{a}}_{\varsigma}(\vartheta_{-t}\omega)),$ for some $\emph{\textbf{a}}_{\varsigma}(\vartheta_{-t}\omega)\in\mathscr{A}_{\varsigma}(\vartheta_{-t}\omega), t\geq 0.$ From Theorem \ref{D-SA}, we know that the deterministic attractor $\mathscr{A}=\{\emph{\textbf{a}}_{*}\}$ is a singleton. Therefore, we have 
	\begin{align*}
		\|\emph{\textbf{a}}_{\varsigma}(\omega)-\emph{\textbf{a}}_{*}\|_{2} & = \|\v_{\varsigma}(t,\vartheta_{-t}\omega;0,\emph{\textbf{a}}_{\varsigma}(\vartheta_{-t}\omega))-\v(t,\emph{\textbf{a}}_{*})\|_{2}
		\nonumber\\ & \leq  \|\y^{\varsigma}(t,\vartheta_{-t}\omega; 0 ,\emph{\textbf{a}}_{\varsigma}(\vartheta_{-t}\omega)- \z(\vartheta_{-t}\omega,0))-\v(t,\emph{\textbf{a}}_{*})\|_{2} + \varsigma \|\z(\omega,0)\|_2 
		\nonumber\\ & \leq  \|\y^{\varsigma}(t,\vartheta_{-t}\omega;0,\emph{\textbf{a}}_{\varsigma}(\vartheta_{-t}\omega)- \z(\vartheta_{-t}\omega,0)) -\v(t,\emph{\textbf{a}}_{*})\|_{2} + \varsigma \|\z(\omega,0)\|_2, \ \text{ for all }\ t\geq 0,
	\end{align*}
which gives by Lemma \ref{PertRad-add}
\begin{align*}
	\|\emph{\textbf{a}}_{\varsigma}(\omega)-\emph{\textbf{a}}_{*}\|_{2} \leq \varsigma^{\frac23}[ \upgamma(\omega) +  \|\z(\omega,0)\|_2].
\end{align*}
This concludes the proof.
\end{proof}
\begin{remark}\label{rem-ROC}
	The primary objective of this work is to establish the rate of convergence  of the random attractor towards the deterministic singleton attractor. According to Theorem \ref{Conver-add}, the obtained rate of convergence is of order $\varsigma^{\frac{2}{3}}$. This rate differs from that of the stochastic Navier-Stokes equations with additive noise, where the convergence order is typically $\varsigma$ (see \cite{HCPEK}).

\end{remark}

\vskip 2mm
\noindent
\textbf{Acknowledgments:}  “This work is funded by national funds through the FCT-Fundação para a Ciência e a Tecnologia, I.P., under the scope of the projects UIDB/00297/2020 (https://doi.org/ 10.54499/UIDB/00297/ 2020) and UIDP/00297/2020 (https://doi.org/10.54499/UIDP/00297/ 2020) (Center for Mathematics and Applications)”. K. Kinra would like to thank Prof. Fernanda Cipriano, Center for Mathematics and Applications (NOVA Math) and Department of Mathematics, NOVA School of Science and Technology (NOVA FCT),	Portugal, for useful discussions. K. Kinra would like to thank Prof. Manil T. Mohan, Department of Mathematics, Indian Institute of Technology Roorkee, Roorkee, India, for introducing him into the research area of attractor theory.

\medskip\noindent
\textbf{Data availability:} No data was used for the research described in the article.

\medskip\noindent
\textbf{Declarations}: During the preparation of this work, the authors have not used AI tools.

\medskip\noindent
\textbf{Conflict of interest:} The authors declare no conflict of interest.


\begin{thebibliography}{}
	
	


\bibitem{Arnold}	L. Arnold, \emph{Random Dynamical Systems}, Springer-Verlag, Berlin, Heidelberg, New York, 1998.



\bibitem{BV} A. V. Babin and M. I. Vishik, \emph{Attractors of Evolution Equations}, North-Holland, 1992.



	
	
	
 
 
 \bibitem{Brze2} Z. Brze\'zniak, Stochastic partial differential equations in M-type 2 Banach spaces, \textit{Potential Anal.}, \textbf{4}(1) (1995), 1--45.
 
 	\bibitem{Brze} Z. Brze\'zniak, On Sobolev and Besov spaces regularity of Brownian paths, \textit{Stoch. Stoch. Rep.}, \textbf{56}(1--2) (1996), 1--15.
 
 \bibitem{Brze1} Z. Brze\'zniak, On stochastic convolution in Banach spaces and applications., \textit{Stoch. Stoch. Rep.}, \textbf{61}(3--4) (1997), 245--295.
 
	
	\bibitem{Brzezniak+Li_2006}  Z. Brze\'zniak and Y. Li, Asymptotic compactness and absorbing sets for 2D stochastic Navier-Stokes equations on some unbounded domains,
	\textit{Trans. Amer. Math. Soc.}  \textbf{358}(12)  (2006), 5587--5629.
	
  
%
	
	
	
	
	\bibitem{BP} Z. Brz\'ezniak and S. Peszat, Stochastic two dimensional Euler equations, \textit{Ann. Probab.}, {\bf 29}(4) (2001), 1796--1832.
	
	
 
	
	
%
	
%
	
	
	\bibitem{Cao+Gao_2025}  Q. Cao and H. Gao, The upper semi-continuity of random attractors to the stochastic evolution equations driven by rough path with Hurst index $H\in(\frac{1}{3},\frac{1}{2}]$, \emph{J. Differential Equations} \textbf{444} (2025), Paper No. 113552, 28 pp.
	
	
	
	
	
	\bibitem{CCLR} T. Caraballo, H. Crauel, J. A. Langa, and J. C. Robinson, The effect of noise on the Chafee-Infante equation: a nonlinear case study, \emph{Proceedings of the American Mathematical Society}, \textbf{135} (2007), pp. 373–-382.
	
	
	




\bibitem{CLR} T. Caraballo, J. A. Langa and J. C. Robinson, Upper semicontinuity of attractors for small random perturbations of dynamical systems, \emph{Communications in Partial Differential Equations}, \textbf{23}(9-10), 1998, 1557-1581.


\bibitem{CLR1}	A. N. Carvalho, J. A. Langa and J. C. Robinson, On the continuity of pullback attractors for evolution processes, \emph{Nonlinear Analysis: Theory, Methods \& Applications}, \textbf{71} (2009), pp. 1812–1824.










	
	
	
	
	
	\bibitem{Chen+Tian+Zhang_2025}P. Chen, R. Tian and X. Zhang, Limiting behavior of random attractors for nonlocal diffusion equations with fractional Laplace-multiplier noise and time delay, \emph{J. Evol. Equ.}, \textbf{25}(2) (2025), Paper No. 48, 44 pp.
	
	
%
%
	
	
	

\bibitem{Cioran2016}	
D.	Cioranescu,  V.	Girault and K. R.	Rajagopal, 
\newblock\textit{Mechanics and mathematics of fluids of the differential type.} \newblock	Vol. 35. Cham: Springer,	2016.
	
   
			
			
	
	
	
	
	\bibitem{CF}	H. Crauel and F. Flandoli, Attractors for random dynamical systems, \emph{Probability Theory and Related Fields}, \textbf{100} (1994), 365-393.
	
	
	
	
	
	
	\bibitem{HCPEK}	H. Cui and P. E. Kloeden, Convergence rate of random attractors for 2D Navier-Stokes equation towards the deterministic singleton attractor,  Chapter 10 in \emph{Contemporary Approaches and Methods in Fundamental Mathematics and Mechanics}, Springer, 2021. 
	
	
		\bibitem{HCJA} H. Cui  and J. A. Langa, Uniform attractors for non-autonomous random dynamical systems, \emph{Journal of Differential Equations}, 263 (2017), pp. 1225--1268.
	
	
	
			
			\bibitem{DiFratta+Solombrino_Arxiv} G. Di Fratta and F. Solombrino, Korn and Poincar\'e-Korn inequalities: a different perspective, \textit{Proc. Amer. Math. Soc.} {\bf 153}(1) (2025), 143--159.
			
			
			
			
			\bibitem{Doss_1977}  H. Doss, Liens entre \'equations diff\'erentielles stochastiques et ordinaires, \textit{Ann. Inst. H. Poincar\'e Sect. B(N.S)}, \textbf{13}  (1977), 99--125.
			
			
			
			
			
			\bibitem{DR95} J.E. Dunn and  K.R. Rajagopal,
			\newblock\textit{Fluids of	differential type: Critical review and thermodynamical analysis},
			\newblock \textit{Int J.Eng. Sci.}, \textbf{33} (1995), 689--729. 
  
  
  
	
	
	
	
	
	\bibitem{Farwig+Kozono+Sohr_2005}   R. Farwig, H. Kozono and H. Sohr, An $L^q$-approach to Stokes and Navier-Stokes equations in general domains, \textit{Acta Math.}, {\bf 195} (2005), 21--53. 
	
	\bibitem{Farwig+Kozono+Sohr_2007}  R. Farwig, H. Kozono and H. Sohr, On the Helmholtz decomposition in general unbounded domains, \textit{Arch. Math. (Basel)}, {\bf 88}(3 (2007), 239--248.  
	
	
	
	
	
	
	
	
 
 
 
 
 \bibitem{FS} F. Flandoli and B. Schmalfuss, Random attractors for the 3D stochastic Navier- Stokes equation with multiplicative white noise, \emph{Stochastics and Stochastic Reports}, \textbf{59}(1-2) (1996), 21–45.
 
 
 
 
	 
	 \bibitem{Foias+Manley+Rosa+Temam_2001} C. Foias, O. Manley, R. Rosa and R. Temam, Navier-Stokes Equations and Turbulence, Cambridge University Press, 2001.
	 
	 
	 
	
	\bibitem{FR80} R.L. Fosdick and , K.R. Rajagopal,
	\newblock {Thermodynamics and stability of fluids of third grade.} \newblock \textit{Proc. Roy. Soc. London Ser. A},  \textbf{339} (1980), 351--377.
	
	
	
	
 
	
	
	
	
	\bibitem{HR1}J. K. Hale and G. Raugel, Lower semicontinuity of attractors of gradient systems and applications, \emph{Annali di Matematica Pura ed Applicata}, \textbf{154} (1989), pp. 281–326.
	
  

\bibitem{Hamza+Paicu_2007} M. Hamza and M. Paicu, \newblock {Global existence and uniqueness result of a class of third-grade fluids equations,} \newblock	\textit{Nonlinearity}, \textbf{20}(5)	(2007), 1095--1114.


 
 
 
	
		
		
		
		
		\bibitem{Han+Zhou_2025}  Z. Han and S. Zhou, Existence and continuity of random exponential attractors for stochastic 3D globally modified non-autonomous Navier-Stokes equation, \emph{Journal of Differential Equations}, \textbf{418} (2025), 1--55.
	
	

   \bibitem{Kinra+Cipriano_TGF}   {K. Kinra} and F. Cipriano, Well-posedness and asymptotic analysis of a class of 2D and 3D third grade fluids in bounded and unbounded domains, \emph{J. Evol. Equ.}, \textbf{25} (2025), Article No. 71.


	
	\bibitem{Kinra+Cipriano_STGF} K. Kinra and F. Cipriano, Random dynamics and invariant measures for a class of non-Newtonian fluids of differential type on 2d and 3d Poincar\'e domains, \textit{Submitted}. \url{https://arxiv.org/pdf/2506.04801}
	
	
	
	\bibitem{Kinra+Mohan_2022_EECT} K. Kinra and M.T. Mohan, Convergence of random attractors towards deterministic singleton attractor for 2D and 3D convective Brinkman-Forchheimer equations, \emph{Evol. Equ. Control Theory}, \textbf{11}(5) (2022), 1701--1744.
	
	\bibitem{Kinra+Mohan_2023_DIE}  K. Kinra and M.T. Mohan, Existence and upper semicontinuity of random pullback attractors for 2D and 3D non-autonomous stochastic convective Brinkman-Forchheimer equations on whole space, \textit{Differential Integral Equations}, {\bf 36}(5--6) (2023),  367--412.
	
	
	\bibitem{Kinra+Mohan_2023_JDDE}    K. Kinra and M.T. Mohan, Large time behavior of deterministic and stochastic 3D convective Brinkman-Forchheimer equations in periodic domains, \emph{J. Dynam. Differential Equations}, \textbf{35}(3) (2023), 2355--2396.
	
	
	
	
	
	
	\bibitem{Kinra+Mohan_2025_JDDE} K. Kinra and M.T. Mohan,  Long term behavior of 2D and 3D non-autonomous random convective Brinkman-Forchheimer equations driven by colored noise,  \emph{J. Dynam. Differential Equations}, \textbf{37}(2) (2025), 1467--1537.
	
	
	
	
	\bibitem{Kesavan_1989}  S. Kesavan, \textit{Topics in functional analysis and applications},
	John Wiley \& Sons, Inc., New York,  1989.
	
	
	
\bibitem{Kondratev+Oleinik_1988} V.A. Kondrat\'ev and O.A. Oleinik, Boundary value problems for a system in elasticity theory in unbounded domains. Korn inequalities, \textit{Russian Math. Surveys}, \textbf{43}(5) (1988), 65-119.
	
	
	
	\bibitem{Kunstmann_2010}  P.C. Kunstmann, Navier-Stokes equations on unbounded domains with rough initial data, {\it Czechoslovak Math. J.}, {\bf 60(135)}(2) (2010), 297--313. 
	
	
	
	\bibitem{Ladyzhenskaya67}
O.A.	Ladyzhenskaya.
  	\newblock  {New equations for the description of the motions of viscous incompressible fluids, and global solvability for their boundary value problems.}
 \newblock \textit{Tr. Mater. Inst. Steklov}, \textbf{102}	(1967), 95--118.
	
	
	
	
	
 
 	
 	
 	\bibitem{LK} D. Li and P. Kloeden, Equi-attraction and the continuous dependence of attractors on parameters, \emph{Glasgow Mathematical Journal}, \textbf{46} (2004), pp. 131–141.
 	
 	
 	
	
	
	
	
	
	
	
	

	
	
	
	
	
	
	
	
	
	
 
	



\bibitem{PP19} 
M. Parida and  S. Padhy,
\newblock {Electro-osmotic flow of a third-grade fluid past a channel having stretching walls.} 
\newblock \textit{Nonlinear Eng.}, \textbf{8}(1) (2019), 56--64.


	\bibitem{RHK18}
	G.J. Reddy, A. Hiremath, M. Kumar,
	\newblock {Computational modeling of unsteady third-grade fluid flow
		over a vertical cylinder: A study of heat transfer visualization,}
	\newblock \textit{Results Phys.}, \textbf{8}  (2018), 671--682.
	
	
	
	
	\bibitem{RE55}
	R.S. Rivlin and J.-L. Ericksen,
	\newblock {Stress-deformation relations for isotropic materials,}
	\newblock \textit{Arch. Rational Mech. Anal.}, \textbf{4}  (1955), 323--425.
	
	
	



	
%
	
	\bibitem{Robinson2} J.C. Robinson, \textit{Infinite-Dimensional Dynamical Systems, An Introduction to Dissipative Parabolic PDEs and the Theory of Global Attractors}, Cambridge Texts in Applied Mathematics, 2001.
	
	
	
	
%
 
	
		





\bibitem{Sussmann_1978} H.J. Sussmann. On the gap between deterministic and stochastic ordinary differential equations, \textit{Ann. Probability}, \textbf{6}(1) (1978),19--41.


	
	
	
	
	
	\bibitem{yas-fer_JNS}  Y. Tahraoui and F. Cipriano, Invariant measures for a class of stochastic third-grade fluid equations in 2D and 3D bounded domains, \emph{J. Nonlinear Sci.}, \textbf{34}(6) (2024), Paper No. 107, 42 pp.
	
	
	
	
	
	
%
%
	
	
	
	
	
	\bibitem{Temam_1984} R. Temam, {\it Navier-Stokes equations}, third edition, 
	Studies in Mathematics and its Applications, 2, North-Holland, Amsterdam, 1984
	
	
	
	
	\bibitem{R.Temam}	R. Temam, \textit{Infinite-Dimensional Dynamical Systems in Mechanics and Physics,} vol. 68, Applied Mathematical Sciences,	Springer, 1988.
	
	
	
	
	
	
	
	
	
	
	
%
	
		

	
	
	
	\bibitem{Wang_2009}B. Wang, Upper semicontinuity of random attractors for non-compact random dynamical systems, \emph{Electronic Journal of Differential Equations}, 2009, pp. 1–18.
	
	
	
	
	
	
	
	\bibitem{Wang+Li+Yang+Jia_2021}  J. Wang, C. Li, L. Yang and M. Jia, Upper semi-continuity of random attractors and existence of invariant measures for nonlocal stochastic Swift-Hohenberg equation with multiplicative noise, \emph{J. Math. Phys.}, \textbf{62}(11) (2021), Paper No. 111507, 31 pp.
	
	
	\bibitem{Wu+Nguyen+Bai}  Y.S. Wu, D. T. Nguyen and H. Bai, Bi-spatial pullback random attractors of stochastic $\varrho$-Navier-Stokes equations: existence, regularity and finite fractal dimension, \emph{J. Math. Phys.}, \textbf{66}(4) (2025), Paper No. 041504, 22 pp.
	
	
	
	
	
 
	
	
%
%
	

	
	
\end{thebibliography}
\end{document}